\documentclass[a4paper,10pt]{article}
\usepackage{amsmath,amssymb,amsthm,amscd}
\usepackage[mathscr]{eucal}
\usepackage{multirow}
\usepackage{hhline}

\theoremstyle{plain}
\newcommand{\cA}{\mathcal{A}}
\newcommand{\cO}{\mathcal{O}}

\newtheorem{theorem}{Theorem}[section]
\newtheorem{corollary}[theorem]{Corollary}
\newtheorem{lemma}[theorem]{Lemma}
\newtheorem*{claim}{Claim}
\newtheorem{proposition}[theorem]{Proposition}

\theoremstyle{definition}
\newtheorem*{question}{Question}
\newtheorem{definition}[theorem]{Definition}
\newtheorem{remark}[theorem]{Remark}

\newcommand{\e}{\varepsilon}
\newcommand{\aet}{\mathrm{\acute{e}t}}
\newtheorem*{acknowledgements}{Acknowledgements}
\title{Torsion points of abelian varieties
with values in infinite extensions over a $p$-adic field}

\author{Yoshiyasu Ozeki}

\begin{document}
\maketitle
\pagestyle{headings}


\begin{abstract}
Let $A$ be an abelian variety
over a $p$-adic field $K$ and
$L$ an algebraic infinite extension over $K$.
We consider the finiteness
of the torsion part of the group of rational points
$A(L)$ under some assumptions.
In 1975, Hideo Imai proved that
such a group
is finite if $A$ has good reduction and
$L$ is the cyclotomic $\mathbb{Z}_p$-extension of $K$.
In this talk, first we show
a generalization of Imai's result in the case where
$A$ has ordinary good reduction.
Next we give some finiteness results
when $A$ is an elliptic curve and $L$ is
the field generated by the $p$-power
torsion of an elliptic curve.
\end{abstract}

\section{Introduction}

Let $K$ be a finite extension field of 
the $p$-adic number field $\mathbb{Q}_p$ with residue field $k$
and fix an algebraic closure $\bar K$ of $K$.
Let $A$ be an abelian variety over $K$.
If $L\subset \bar K$ is a finite extension over $K$,
it is well-known that the torsion part of the $L$-rational points $A(L)$
is finite 
(cf.\ \cite{Ma55}, Thm.\ 7).
On the other hand, in general, 
we do not know whether the torsion part of 
$A(L)$ is finite or infinite
if $L\subset \bar K$ is an infinite algebraic extension over $K$. 
We are interested in understanding whether 
the torsion part of $A(L)$ is finite or infinite.
As one of the known results, 
Imai (\cite{Im75}) proved that the torsion part of $A(K(\mu_{p^{\infty}}))$ 
is finite if $A$ has potential good reduction, 
where $K(\mu_{p^{\infty}})$ is the smallest field 
containing $K$ and all $p$-power roots of unity.
In this connection our first result in this paper is:
\begin{theorem}[Thm.\ \ref{Thm:4-1} and Cor.\ \ref{Cor:3-11}]
\label{Thm:1-1}
Let $A$ be an abelian variety over $K$
which has potential ordinary good reduction.
Let $L$ be an algebraic extension of $K$
with residue field $k_L$.

\noindent
$(1)$ Assume that the residue field of $L(\mu_{p^{\infty}})$ is a potential 
prime-to-$p$ extension of $k$ $($in the sense of Def.\ \ref{Def:2-1}$)$.
Then $A(L)[p^{\infty}]$ is finite.

\noindent
$(2)$ If $L$ contains $K(A[p])$ and $K(\mu_{p^{\infty}})$, then
$k_L$ is a potential 
prime-to-$p$ extension of $k$ 
if and only if 
$A(L)[p^{\infty}]$ is finite.

\noindent
$(3)$ Assume that $L(\mu_{p^{\infty}})$ is a Galois extension of $K$  
whose residue field is finite.
Then the torsion part of $A(L)$ is finite. 
\end{theorem}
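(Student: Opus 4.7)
The plan is to split the torsion as $A(L)_{\mathrm{tors}} = A(L)[p^{\infty}] \oplus A(L)_{p'}$, where $A(L)_{p'}$ denotes the prime-to-$p$ torsion, and to prove the finiteness of each summand separately. The $p$-primary part is handed to us by part~(1), while the prime-to-$p$ part follows from classical local theory.

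For $A(L)[p^{\infty}]$, I would first observe that $k_L$ is a subfield of the residue field of $M := L(\mu_{p^{\infty}})$, which is finite by hypothesis; in particular, $k_L$ is finite and the residue field of $M$ is a finite extension of $k$. Any finite residue-field extension is automatically a potential prime-to-$p$ extension in the sense of Definition~\ref{Def:2-1}: the ``potential'' freedom lets us absorb the entire finite extension as the base, leaving a trivial quotient that is vacuously prime to $p$. Hence part~(1) applies directly and yields the finiteness of $A(L)[p^{\infty}]$.

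For $A(L)_{p'}$, I would use the potential ordinary good reduction hypothesis to fix a finite Galois extension $K'/K$ over which $A$ acquires good reduction, and set $L' := L \cdot K'$. Then $L'/L$ is finite, the residue field $k_{L'}$ is still finite, and $A$ has good reduction over $L'$ with smooth special fibre $\tilde A$ over $k_{L'}$. Classical local theory shows the reduction map restricts to an injection $A(L')_{p'} \hookrightarrow \tilde A(k_{L'})$; the right-hand side being finite forces both that only finitely many primes $\ell \neq p$ contribute and that at each such $\ell$ only finitely many $\ell$-power torsion points of $A$ are rational over $L'$. Hence $A(L')_{p'}$ is finite, and in particular $A(L)_{p'}$ is finite.

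The substantive difficulty of the theorem is already absorbed into part~(1) (i.e., Theorem~\ref{Thm:4-1}); part~(3) is essentially a repackaging of (1) together with the standard finiteness of prime-to-$p$ torsion under good reduction. The one point that requires minor attention is the passage between $L$ and $L' = LK'$, which is harmless because a finite base change can only enlarge the torsion subgroup by a finite factor, so finiteness over $L'$ transfers to finiteness over $L$.
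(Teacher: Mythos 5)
Your argument covers only part (3) of the theorem, and even there it presupposes part (1). The statement comprises three assertions, and parts (1) and (2) are where all the substance lies. For part (1) one must actually use potential ordinary good reduction: after a finite base change one writes $\rho_{A,p}$ in block upper-triangular form $\left(\begin{smallmatrix} S_A & U_A \\ 0 & T_A\end{smallmatrix}\right)$ with $T_A$ unramified (via the connected-\'etale sequence of the $p$-divisible group of the N\'eron model) and, crucially, $S_A=\e\cdot({}^tT_{A^{\vee}})^{-1}$; since $L\supset K(\mu_{p^{\infty}})$, both blocks factor through $G_{k_L}$, and one shows that for a topological generator $\sigma_L$ of $G_{k_L}$ neither $T_A(\sigma_L)$ nor $S_A(\sigma_L)$ has eigenvalue $1$, because an eigenvalue $1$ for either would force $\tilde A(k_L)[p^{\infty}]$ or $\tilde{A^{\vee}}(k_L)[p^{\infty}]$ to be infinite, contradicting the potential prime-to-$p$ hypothesis on the residue field. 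None of this appears in your write-up; saying the substantive difficulty is ``handed to us by part (1)'' is circular when part (1) is itself part of what is to be proved.

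Part (2) is omitted entirely. Its nontrivial direction is the converse: if $k_L$ is not a potential prime-to-$p$ extension of $k$, then (using $L\supset K(A[p])$, so that $\tilde{A^{\vee}}(\bar k)[p]$ is rational over $k_L$) the group $\tilde{A^{\vee}}(k_L)[p^{\infty}]$ is infinite, hence $T_{A^{\vee}}(\sigma_L)$ has eigenvalue $1$; since $\e(\sigma_L)=1$, the block $S_A(\sigma_L)=\e\cdot({}^tT_{A^{\vee}})^{-1}(\sigma_L)$ also has eigenvalue $1$, yielding a nonzero $G_L$-fixed vector in $T_p(A)$ and hence an infinite $A(L)[p^{\infty}]$. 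What you do prove correctly --- that a finite residue field is trivially a potential prime-to-$p$ extension, and that the prime-to-$p$ torsion is finite via injectivity of reduction after a finite base change achieving good reduction --- is essentially the paper's own argument for part (3) (the paper phrases the second step via N\'eron--Ogg--Shafarevich: the prime-to-$p$ torsion of $A(L(\mu_{p^{\infty}}))$ already lies in $A(L(\mu_{p^{\infty}})\cap K^{\mathrm{ur}})$, a finite extension of $K$). But as a proof of the theorem as stated, the proposal has a genuine gap.
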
 

Here we denote by $K(A[p])$
the field generated by the coordinates of all $p$-torsion 
points of $A$.
This theorem is a generalization of Imai's theorem 
under the hypothesis that $A$ has ordinary good reduction.
We can also obtain the global case of this theorem, 
see Section 2.4.  
As an easy consequence of Thm.\ \ref{Thm:1-1} (2),
we see that the group $A(K(\mu_{\infty}))[p^{\infty}]$ is infinite for 
an abelian variety $A$ as in the theorem 
if we replace $K$ with its suitable finite extension
(here we denote by $K(\mu_{\infty})$ the field 
obtained by adjoining all roots of unity to $K$). 
We shall point out that such a group must be finite
\textit{in the global case}
by a result of Ribet (\cite{Ri81}). 

Next we consider the finiteness of the torsion part of 
$A(L)$ for $L=K_{B,p}:=K(B[p^{\infty}])$, 
the field generated by the coordinates of all $p$-power torsion 
points of a semiabelian variety $B$.
For example, 
$K_{\mathbb{G}_m,p}$ is  
the cyclotomic field $K(\mu_{p^{\infty}})$, where
 $\mathbb{G}_m$ is the multiplicative group over $K$.
Hence Imai's theorem is a result on the 
torsion part of $A(K_{\mathbb{G}_m,p})$. 
From such a point of view,
we raise the following question:

\noindent
\begin{question}
Let the notation be as above.
Then is the torsion part of 
$A(K_{B,p})$ finite?
\end{question}

The torsion part of $A(K_{B,p})$ is finite if and only if 
$A(K_{B,p})[\ell^{\infty}]$ is finite for all primes $\ell$ and
$A(K_{B,p})[\ell^{\infty}]=0$ for almost all primes $\ell$.
For any prime $\ell\not =p$, it is easy to see that the $\ell$-part of  
$A(K_{B,p})$ is finite 
(cf.\ Prop.\ \ref{Pro:5-1}). 
Hence we are interested in the finiteness of the $p$-part of $A(K_{B,p})$.
If $A=E_1$ and $B=E_2$
are elliptic curves, we can gain various results
by distinguishing the reduction types of $E_1$ and $E_2$,
see the table below.

\begin{theorem}
\label{Thm:1-2}
The finiteness of $E_1(K_{E_2,p})[p^{\infty}]$ is as follows: 

\begin{center}
\begin{tabular}{|c|c|c|c|c|c|} 
\hline
\multicolumn{2}{|c|}{$E_1$} & 
\multicolumn{2}{|c|}{$E_2$} & 
"$E_1(K_{E_2,p})[p^{\infty}]$ & 
$\mathrm{statement}$ 
\\ \hline \hline

\multicolumn{2}{|c|}{\multirow{3}{1.5em}{$\mathrm{ord}$}} & 
\multicolumn{2}{|c|}{$\mathrm{ord}$} & 
`` $\infty$'' ${}^{*_1}$ & $\mathrm{Prop.\ \ref{Thm:5-1}}$\\
\hhline{|~~----|}
\multicolumn{2}{|c|}{} & 
\multicolumn{2}{|c|}{$\mathrm{ss}$} 
& $\mathrm{finite}$ & $\mathrm{Cor.\ \ref{Cor:4-1}}$\\
\hhline{|~~----|}
\multicolumn{2}{|c|}{} & 
\multicolumn{2}{|c|}{$\mathrm{mult}$} & 
$\mathrm{finite}$ & $\mathrm{Cor.\ \ref{Cor:4-1}}$\\ \hline
\hline

\multirow{8}{0.8em}{$\mathrm{ss}$}&
\multirow{4}{2em}{$\mathrm{FCM}$} & 
\multirow{2}{1.5em}{$\mathrm{ord}$} &
$\mathrm{CM}$ & $\mathrm{finite}^{*_2}$ & $\mathrm{Prop.\ \ref{Pro:5-10}}$\\
\hhline{|~~~---|} & & &
$\mathrm{non}$-$\mathrm{CM}$ & $\mathrm{finite}$ & $\mathrm{Prop.\ \ref{Pro:5-10}}$\\
\hhline{|~~----|} & & 
\multirow{2}{0.9em}{$\mathrm{ss}$} &
$\mathrm{FCM}$ & `` $\mathrm{finite}$'' & 
$\mathrm{Prop.\ \ref{Pro:5-6}}$\\ 
\hhline{|~~~---|} & & &
$\mathrm{non}$-$\mathrm{FCM}$ & $\mathrm{finite}$ & $\mathrm{Prop.\ \ref{Pro:5-10}}$\\
\hhline{|~-----|} & 
\multirow{4}{2em}{$\mathrm{non}$-$\mathrm{FCM}$} & 
\multirow{2}{1.5em}{$\mathrm{ord}$} &
$\mathrm{CM}$ & $\mathrm{finite}$ & $\mathrm{Prop.\ \ref{Pro:5-10}}$\\
\hhline{|~~~---|} & & &
$\mathrm{non}$-$\mathrm{CM}$ & $\mathrm{finite}$ & $\mathrm{Prop.\ \ref{Pro:5-10}}$\\
\hhline{|~~----|} & & 
\multirow{2}{0.9em}{$\mathrm{ss}$} &
$\mathrm{FCM}$ & $\mathrm{finite}$ & $\mathrm{Prop.\ \ref{Pro:5-10}}$ \\ 
\hhline{|~~~---|} & & &
$\mathrm{non}$-$\mathrm{FCM}$ & $\mathrm{finite}$ $\mathrm{or}$
$\infty^{*_3}$ & 
$\mathrm{Prop.\ \ref{Pro:5-7}}$\\ \hline
\hline

\multicolumn{2}{|c|}{$\mathrm{split\ mult}$} & 
\multicolumn{2}{|c|}{$\mathrm{any}$}& 
$\infty$ & $\mathrm{Prop.\ \ref{Pro:5-2}}$\\ \hline
\hline

\multicolumn{2}{|c|}{$\mathrm{non}$-$\mathrm{split\ mult}$} & 
\multicolumn{2}{|c|}{$\mathrm{any}$}& 
`` $\mathrm{finite}$'' & $\mathrm{Prop.\ \ref{Pro:5-3}}$\\
\hline
\end{tabular}
\end{center}
\end{theorem}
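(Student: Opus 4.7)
The statement is a case-by-case classification, organised by the reduction types of $E_1$ and of $E_2$, and each row refers to a separate proposition. My plan is therefore to subdivide into three main blocks according to the reduction type of $E_1$ (ordinary, supersingular, multiplicative) and, within each block, to peel off the subcases by the reduction type of $E_2$.

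\textbf{Block A: $E_1$ ordinary.} Here Theorem \ref{Thm:1-1} applies with $A=E_1$ and $L=K_{E_2,p}$, so everything reduces to controlling the residue field $k_L$ of $L(\mu_{p^\infty})$. If $E_2$ is also ordinary, the connected--\'etale sequence of $E_2[p^\infty]$ shows that $K_{E_2,p}$ contains an infinite unramified extension whose residue field is visibly not a potential prime-to-$p$ extension of $k$; combined with Theorem \ref{Thm:1-1}(2) (after possibly enlarging $K$ so that $K\supset K(E_1[p])$ and $K(\mu_{p^\infty})$) this yields the ``$\infty$'' entry. If $E_2$ is supersingular, the classical analysis of the formal group $\widehat{E_2}$ shows that $K(E_2[p^\infty])$ is essentially totally ramified over $K^{\rur}\cap K(E_2[p^\infty])$ and its residue field is finite; then Theorem \ref{Thm:1-1}(3) forces the $p$-part to be finite. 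The multiplicative case is handled by Tate uniformisation: $K_{E_2,p}\subset K(\mu_{p^\infty}, q^{1/p^\infty})$ up to a finite extension, whose residue field is again finite.

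\textbf{Block B: $E_1$ supersingular.} Theorem \ref{Thm:1-1} no longer applies, and I would instead work directly with the $p$-adic Galois representations $V_pE_1$ and $V_pE_2$. A nonzero torsion point of $E_1$ over $K_{E_2,p}$ produces a nonzero $H_{E_2}$-invariant in $V_pE_1$ where $H_{E_2}=\mathrm{Gal}(\bar K/K_{E_2,p})$, i.e.\ a nonzero $H_{E_2}$-equivariant map $\mathbb{Q}_p\to V_pE_1$. Using that $V_pE_1$ is an irreducible crystalline representation of Hodge--Tate weights $\{0,1\}$ whose restriction to inertia is described (via Fontaine--Laffaille / Sen theory) by the presence or absence of FCM on the formal group, one compares the possible Hodge--Tate--Sen weights appearing in $V_pE_1|_{H_{E_2}}$ with those available inside $V_pE_2$. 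In each FCM/non-FCM combination this either rules out invariants (giving finiteness) or, as in subcase $*_3$, leaves open the possibility that $E_1$ and $E_2$ become twist-isogenous after restriction, which is the source of the ``finite or $\infty$'' entry.

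\textbf{Block C: $E_1$ multiplicative.} Apply Tate's uniformisation to $E_1$. In the split case, $E_1$ contains $\mu_{p^\infty}$ as a Galois submodule, and $K_{E_2,p}$ always contains $K(\mu_{p^\infty})$ via the Weil pairing on $E_2[p^\infty]$, so $\mu_{p^\infty}\subset E_1(K_{E_2,p})$ gives the ``$\infty$'' entry. In the non-split case one twists by the unramified quadratic character $\chi$; the $\chi$-twist of $\mu_{p^\infty}$ has only finitely many $H_{E_2}$-invariants, at least under the mild assumption indicated by the quotation marks, and the Tate parameter $q$ only contributes a pro-$p$ tower of $p$-th roots which, after the twist, is again controlled.

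\textbf{Expected main obstacle.} The genuinely delicate case is the non-FCM/non-FCM supersingular box ($*_3$): here neither formal CM nor an abelian uniformisation is available, and one must decide when the restrictions of two generic supersingular Tate modules to $\mathrm{Gal}(\bar K/K_{E_2,p})$ share an invariant line. I would expect to invoke a comparison of the Sen operators (or of the associated $(\varphi,\Gamma)$-modules) and isolate the precise isogeny/twist condition under which the answer flips from finite to infinite. The remaining obstacle is ensuring that the ``quoted'' entries correspond to sharp hypotheses on $K$, which I would address uniformly at the start by replacing $K$ by a suitable finite extension, as is already done throughout the paper.
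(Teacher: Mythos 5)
Your Blocks A and C essentially reproduce the paper's route: for ordinary $E_1$ everything is funneled through Thm.~\ref{Thm:4-1} by controlling the residue field of $K_{E_2,p}$ (the paper gets finiteness of that residue field for supersingular or multiplicative $E_2$ from the equality $\mathrm{Lie}(\rho_{E_2,p}(G_K))=\mathrm{Lie}(\rho_{E_2,p}(I_K))$ rather than from an explicit formal-group computation, and the ord/ord entry is sharpened in Prop.~\ref{Thm:5-1} to a criterion in terms of the unramified characters $\bar\chi_i$, which is why that entry is only ``$\infty$ in many cases''); and the multiplicative block is exactly Tate uniformisation plus the quadratic character $\chi$ as in Prop.~\ref{Pro:5-2} and Prop.~\ref{Pro:5-3}.

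Block B, however, has a genuine gap. Your proposed invariant --- comparing Hodge--Tate--Sen weights of $V_pE_1|_{H_{E_2}}$ with those ``available inside'' $V_pE_2$ --- cannot separate the cases in the table: every $V_pE_i$ occurring here is crystalline with Hodge--Tate weights $\{0,1\}$, so FCM and non-FCM supersingular curves (and ordinary CM versus non-CM curves) are indistinguishable at that level. What the argument actually requires is two other inputs. First, irreducibility of $V_pE_1$ as a $G_K$-module for supersingular $E_1$ (Lem.~\ref{Lem:5-4}), which converts finiteness of $E_1(K_{E_2,p})[p^{\infty}]$ into the field-theoretic statement $K_{E_1,p}\not\subset K_{E_2,p}$; you never use that $K_{E_2,p}/K$ is Galois, without which an invariant \emph{line} need not force all of $V_pE_1$ to be fixed. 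Second, Serre's classification of the full Lie algebras $\mathrm{Lie}(\rho_{E_i,p}(G_K))$ and $\mathrm{Lie}(\rho_{E_i,p}(I_K))$ (Prop.~\ref{Pro:5-4}: split/non-split Cartan, Borel, or all of $\mathfrak{gl}_2$), after which a surjection $\mathfrak{g}_2\to\mathfrak{g}_1$ is ruled out by comparing dimensions and abelianizations --- a count of dimensions of Lie algebras, not of weights. Moreover two of the sub-cases need arguments you do not supply at all: the FCM/FCM row rests on the hypothesis $F_1\neq F_2$ on the fields of formal complex multiplication and a local-class-field-theory comparison of norm kernels (Lem.~\ref{Lem:5-5}, Prop.~\ref{Pro:5-6}), and the non-FCM/non-FCM row ($*_3$) needs Tate's full faithfulness for $p$-divisible groups together with the rigidity of embeddings of $\mathfrak{sl}_2$ into $\mathfrak{gl}_2$ to show that infiniteness is equivalent to an isogeny of formal groups $\hat E_2\to\hat E_1$ over a finite extension (Prop.~\ref{Pro:5-7}); your ``twist-isogenous'' guess points in the right direction but the Sen-operator comparison as stated would not produce it.
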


Here ``ord'', ``ss'', ``mult'', 
``CM'' and ``FCM'' in the above table stand for 
ordinary, supersingular, multiplicative, complex multiplication
and formal complex multiplication, respectively.
The symbols $*_1$, $*_2$ and $*_3$ in the table imply the followings:

\bigskip

$*_1\cdots$  $E_1$, $E_2$ : ordinary good reduction

\qquad \qquad $\Rightarrow$ 
$E_1(K_{E_2,p})[p^{\infty}]$ is infinite in many cases.

$*_2\cdots$ $E_1$ : supersingular good reduction, 
formal complex multiplication
 
\qquad \ \ $E_2$ : ordinary good reduction, complex multiplication

\qquad \qquad $\Rightarrow$
$E_1(K_{E_2,p})[p^{\infty}]$ is finite in all cases.

$*_3\cdots$ $E_1$, $E_2$ : supersingular good reduction, 
formal complex multiplication
 
\qquad \qquad $\Rightarrow$
$E_1(K_{E_2,p})[p^{\infty}]$ may be finite or infinite (case by case).\\

For more precise information of Thm.\ \ref{Thm:1-2}, see the corresponding
statements of the table.

\begin{acknowledgements}
The author would like to express his sincere gratitude to Professor 
Yuichiro Taguchi who proposed the theme of this paper
to him and gave him useful advices, especially
the proof of Prop.\ \ref{Pro:5-7}. 
The author also would like to thank Akio Tamagawa
for pointing out a mistake of Thm.\ \ref{Thm:1-1}.

\end{acknowledgements}


\section{Finiteness theorems for abelian varieties}
In this section, we prove Thm.\ \ref{Thm:1-1}   
given in the Introduction and 
consider the global case for this theorem.

Let $p$ be a prime number. 
Let $A$ be an abelian variety over a field $K$ of dimension $d$
and $A^{\vee}$ the dual abelian variety of $A$.
Fix an algebraic closure $\bar K$ of $K$
and a separable closure $K^{\mathrm{sep}}$ of $K$ in $\bar K$.
Put $G_K:=\mathrm{Gal}(K^{\mathrm{sep}}/K)$, 
the absolute Galois group of $K$.
For any algebraic extension $L$ over $K$ 
and any integer $n>0$,
We denote by $A(L)[p^n]$ the kernel of 
the multiplication-by-$p^n$ map of the
$L$-rational points $A(L)$ of $A$
and put $A(L)[p^{\infty}]:=\underset{n>0}{\cup}A(L)[p^n]$.
We denote by $K_{A[p^n]}=K(A[p^n])$ 
the field generated by the coordinates of $A(\bar K)[p^n]$.
Put $K _{A,p}=K(A[p^{\infty}]):=\underset{n>0}{\cup} K(A[p^n])$. 
Then the field $K_{A,p}$ contains $K(\mu_{p^{\infty}})$,
the field adjoining all $p$-power roots of unity to $K$.
The natural continuous 
representation associated to the Tate module $T_p(A)$ of $A$ 
is denoted by 
\[
\rho_{A,p}:G_K\to GL(T_p(A))\simeq GL_h(\mathbb{Z}_p)
\]
for some $h\ge 0$ and denote its residual representation by
\[
\bar \rho_{A,p}:G_K\overset{\rho_{A,p}}{\to} GL_{h}(\mathbb{Z}_p)
\overset{\mathrm{mod}\ p}{\to} GL_{h}(\mathbb{F}_p)
\ (\simeq GL(A(K^{\mathrm{sep}})[p])). 
\]
By abuse of notation, we shall consider 
$\rho_{A,p}$ as the representation of 
$V_p(A)=T_p(A)\otimes_{\mathbb{Z}_p} \mathbb{Q}_p$.
We note that the definition field of the representation 
$\rho_{A,p}$ (resp.\ $\bar \rho_{A,p}$) 
is the field $K_{A,p}$ (resp.\ $K_{A[p]}$).

\subsection{Some properties of torsion points}
In this subsection, we collect some (in-)finiteness
properties of the torsion part of abelian varieties 
which are easy to prove.    

First we note the following proposition, which plays an important role 
throughout this paper:
\begin{proposition}
$A(L)[p^{\infty}]$ is finite
if and only if the fixed subgroup $T_p(A)^{G_L}$ of $T_p(A)$ 
by the absolute Galois group $G_L$ of $L$ 
is $0$.
\end{proposition}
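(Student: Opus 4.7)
The plan is to identify $T_p(A)^{G_L}$ with the inverse limit $\varprojlim_n A(L)[p^n]$ taken with respect to the multiplication-by-$p$ maps $A(L)[p^{n+1}]\to A(L)[p^n]$. This identification is immediate from the definition $T_p(A)=\varprojlim_n A[p^n]$ together with the fact that taking $G_L$-invariants commutes with such inverse limits of discrete Galois modules. Under this identification the equivalence to be proved becomes: the union $A(L)[p^{\infty}]=\bigcup_n A(L)[p^n]$ is finite if and only if the inverse limit $\varprojlim_n A(L)[p^n]$ vanishes. The forward implication is easy: if $A(L)[p^{\infty}]$ is finite then it has bounded exponent $p^N$, and the $N$-fold composition of transition maps is zero, forcing the inverse limit to vanish.

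For the converse, I would argue contrapositively and assume that $A(L)[p^{\infty}]$ is infinite. A key structural input is that each $A(L)[p^n]$ is a subgroup of the finite group $A[p^n]\simeq(\mathbb{Z}/p^n)^{2d}$, and hence finite; this forces the exponent of $A(L)[p^{\infty}]$ to be unbounded. I would then pass to the stabilized images
\[
C_n := \bigcap_{m\ge n} p^{m-n}\,A(L)[p^m] \ \subseteq \ A(L)[p^n],
\]
each of which is a descending intersection inside the finite group $A(L)[p^n]$ and hence coincides with $p^{m-n}A(L)[p^m]$ for all sufficiently large $m$ (depending on $n$). For any $y\in A(L)[p^m]$ of exact order $p^m$, with $m$ large enough that stabilization has occurred, the element $p^{m-n}y$ lies in $C_n$ and has order $p^n$, so in particular $C_n\neq 0$. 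By the same stabilization the transition maps restrict to surjections $C_{n+1}\twoheadrightarrow C_n$, and a standard Mittag--Leffler argument for inverse limits of non-empty finite sets with surjective transitions yields a non-zero element of $\varprojlim_n C_n \subseteq T_p(A)^{G_L}$.

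The main obstacle is conceptual rather than technical: one must convert the infinitude of $A(L)[p^{\infty}]$ into a single compatible sequence by simultaneously exploiting the finiteness of each $A(L)[p^n]$ and the unboundedness of the exponent of their union. Once the stabilized images $C_n$ are introduced the argument reduces to routine Mittag--Leffler bookkeeping, with no serious technical difficulty.
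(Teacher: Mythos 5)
Your proof is correct and takes essentially the same route as the paper, whose entire proof is the remark that the statement ``immediately follows from the definition of Tate module and the fact that each $A(L)[p^n]$ is finite.'' Your identification $T_p(A)^{G_L}\simeq\varprojlim_n A(L)[p^n]$ and the stabilized-images/Mittag--Leffler argument are precisely the standard details the paper leaves implicit.
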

\begin{proof}
This immediately follows from the definition of Tate module 
and the fact that each $A(L)[p^n]$ is finite. 
\end{proof}

The field $K(A[p])$ is a finite Galois extension of $K$
whose Galois group $\mathrm{Gal}(K(A[p])/K)$ is a 
subgroup of $GL_h(\mathbb{F}_p)$ for the integer 
$h\ge 0$ such that $A(\bar K)[p]\simeq (\mathbb{Z}/p\mathbb{Z})^{\oplus h}$.
Put $g_{h,p}:=(p^h-1)(p^h-p)\cdots (p^h-p^{h-1})$,
the order of $GL_h(\mathbb{F}_p)$.

\begin{proposition}
$(1)$ If the absolute Galois group $G_K$ of $K$ is 
an inverse limit of finite groups of order prime to $p$,
then   
\[
A(K_{A[p]})[p^{\infty}]=A(K^{\mathrm{sep}})[p^{\infty}].
\]

\noindent
$(2)$ If the absolute Galois group $G_K$ of $K$ is 
an inverse limit of finite groups of order prime to $g_{h,p}$,
then   
\[
A(K)[p^{\infty}]=A(K^{\mathrm{sep}})[p^{\infty}].
\]

\end{proposition}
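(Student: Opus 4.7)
The plan is to combine two standard group-theoretic facts about the continuous representation $\rho_{A,p}\colon G_K\to GL_h(\mathbb{Z}_p)$. First, the principal congruence subgroup $\{I+pM : M\in M_h(\mathbb{Z}_p)\}$, which is exactly the kernel of the reduction map $GL_h(\mathbb{Z}_p)\to GL_h(\mathbb{F}_p)$, is a pro-$p$ group. Second, any continuous homomorphism from a profinite group that is an inverse limit of finite groups of order prime to $p$ into a pro-$p$ group is trivial, since each finite quotient of the image would have to be both a $p$-group and of order coprime to $p$.

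For $(1)$, I would restrict $\rho_{A,p}$ to $G_{K_{A[p]}}=\ker(\bar\rho_{A,p})$. By the very definition of $K_{A[p]}$, the restriction lands in the principal congruence subgroup, and hence in a pro-$p$ group. On the other hand $G_{K_{A[p]}}$, being a closed subgroup of $G_K$, inherits the property of being an inverse limit of finite groups of order prime to $p$ (by taking images in the finite quotients). By the second fact above, the restriction is trivial, so $G_{K_{A[p]}}$ acts trivially on $T_p(A)$, and therefore on $A[p^{\infty}]=V_p(A)/T_p(A)$. This gives $A(K^{\mathrm{sep}})[p^{\infty}]\subseteq A(K_{A[p]})[p^{\infty}]$; the reverse inclusion is tautological.

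For $(2)$, I would first note that whenever $h\ge 2$, the integer $g_{h,p}$ contains the factor $p^h-p=p(p^{h-1}-1)$ and is therefore divisible by $p$; in the characteristic-zero setting of the paper we have $h=2\dim A$, so apart from the vacuous case $A=0$ we have $h\ge 2$. Consequently the hypothesis on $G_K$ is strictly stronger than being pro-prime-to-$p$, and part $(1)$ applies. It therefore suffices to show $K_{A[p]}=K$. For this, one observes that $\mathrm{Gal}(K_{A[p]}/K)$ is simultaneously a continuous quotient of $G_K$, hence of order coprime to $g_{h,p}$, and a subgroup of $GL_h(\mathbb{F}_p)$, hence of order dividing $g_{h,p}$; thus it is trivial, and the conclusion follows from $(1)$.

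The argument is pure profinite group theory, and I do not anticipate any genuine obstacle; the only non-cosmetic check is the verification that $p\mid g_{h,p}$ in the regime where the statement is not vacuous, which is needed to make $(2)$ reduce cleanly to $(1)$.
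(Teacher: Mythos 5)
Your proof is correct and follows essentially the same route as the paper's: for (1) both arguments restrict $\rho_{A,p}$ to $G_{K_{A[p]}}$, note that the image lies in the pro-$p$ principal congruence subgroup while $G_{K_{A[p]}}$ is pro-prime-to-$p$, and conclude triviality; for (2) both reduce to (1) by observing that the hypothesis forces $\bar\rho_{A,p}$ to be trivial, i.e.\ $K_{A[p]}=K$. The caveat you flag — that the reduction needs $p\mid g_{h,p}$, i.e.\ $h\ge 2$, which your appeal to ``characteristic zero'' does not literally cover since this section allows arbitrary base fields (and $h=1$ does occur, e.g.\ for an ordinary elliptic curve in characteristic $p$, where the statement as written becomes problematic) — is a real edge case, but the paper's own one-line proof of (2) silently makes the same assumption, so you are if anything the more careful of the two.
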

\begin{proof}
$(1)$ If the group $A(K^{\mathrm{sep}})[p^{\infty}]$ is 0,
there is nothing to prove and hence 
we may assume that $A(K^{\mathrm{sep}})[p^{\infty}]\not =0$. 
Hence, for some $h>0$, $\rho_{A,p}|_{G_{K_{A[p]}}}$ has values 
in the kernel of the reduction map
$GL_h(\mathbb{Z}_p)\to GL_h(\mathbb{F}_p)$,
which is a pro-$p$ group.
Since $G_{K_{A[p]}}$ is an inverse limit of 
finite groups of order prime to $p$,
the representation $\rho_{A,p}|_{G_{K_{A[p]}}}$ is trivial and thus we have 
\[
A(\bar K)[p^{\infty}]=
A(\bar K)[p^{\infty}]^{\mathrm{ker}\rho_{A,p}}\subset
A(\bar K)[p^{\infty}]^{G_{K_{A[p]}}}=
A(K_{A,p})[p^{\infty}].
\]
This completes the proof of $(1)$.

$(2)$ We use the same argument as $(1)$ 
except only that 
we do not need to consider $K_{A[p]}$ since $\rho_{A,p}|_{G_K}$
is already trivial.
\end{proof}

If there is a Galois equivariant homomorphism 
among two Tate modules of abelian varieties,
we can see some infiniteness properties about torsion points
of abelian varieties. 
\begin{proposition}
\label{Pro:2-0}Let $A$ and $B$ be abelian varieties over $K$.
If 
\[
\mathrm{Hom}_{\mathbb{Z}_p[G_K]}(T_p(B),T_p(A))\not =0,
\]
then $A(K_{B,p})[p^{\infty}]$ 
is infinite.
\end{proposition}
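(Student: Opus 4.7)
The plan is to combine two very simple observations: first, that the Galois group $G_{K_{B,p}}$ acts trivially on $T_p(B)$ essentially by definition of $K_{B,p}$, and second, that the first proposition of this subsection reduces the finiteness of $A(L)[p^{\infty}]$ to the vanishing of $T_p(A)^{G_L}$. Combining these, all I need to produce is a nonzero $G_{K_{B,p}}$-fixed element of $T_p(A)$, and the hypothesis hands me one up to a short argument.

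More precisely, I would first note that $K_{B,p}=K(B[p^{\infty}])$ is exactly the fixed field of $\ker \rho_{B,p}\subset G_K$, so that $G_{K_{B,p}}=\ker \rho_{B,p}$ acts trivially on $T_p(B)$. Next, pick any nonzero $f\in \mathrm{Hom}_{\mathbb{Z}_p[G_K]}(T_p(B),T_p(A))$. For every $v\in T_p(B)$ and every $\sigma\in G_{K_{B,p}}$ one has
\[
\sigma\cdot f(v) \;=\; f(\sigma\cdot v)\;=\;f(v),
\]
so $f(T_p(B))\subset T_p(A)^{G_{K_{B,p}}}$. Since $f\neq 0$ and $T_p(A)$ is a torsion-free $\mathbb{Z}_p$-module, the submodule $f(T_p(B))$ is nonzero; in particular $T_p(A)^{G_{K_{B,p}}}\neq 0$.

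Finally I would invoke the first proposition of Section 2.1 (the one asserting $A(L)[p^{\infty}]$ is finite iff $T_p(A)^{G_L}=0$) with $L=K_{B,p}$ to conclude that $A(K_{B,p})[p^{\infty}]$ is infinite.

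There is really no substantial obstacle here; the only care required is the observation that a nonzero $\mathbb{Z}_p$-linear map out of the free module $T_p(B)$ has nonzero image, so that the fixed submodule is genuinely nontrivial and corresponds via the first proposition to an infinite $p$-power torsion subgroup.
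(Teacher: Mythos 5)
Your proof is correct and follows essentially the same route as the paper: both take a nonzero $G_K$-equivariant map $f$ and observe that its image (equivalently $T_p(B)/\ker f$) is a nonzero $G_{K_{B,p}}$-fixed submodule of $T_p(A)$, then conclude by the finiteness criterion $T_p(A)^{G_L}=0$. You merely spell out the details (the equivariance computation and the citation of the first proposition) that the paper leaves implicit.
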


\begin{proof}
Take $f$ to be a non-trivial element of 
$\mathrm{Hom}_{\mathbb{Z}_p[G_K]}(T_p(B),T_p(A))$.
Then $T_p(B)/\mathrm{ker}(f)$ is a non-zero 
subspace of $T_p(A)$ with trivial $G_{K_{B,p}}$-action.
This implies the desired statement.  
\end{proof}

\begin{definition}
\label{Def:2-1}
Let $L$ be an algebraic extension of $K$.

\noindent
$(1)$ We say that $L$ is a \textit{prime-to-$p$ extension of $K$}  
if $L$ is a union of finite extensions over $K$ of degree 
prime-to-$p$.

\noindent
$(2)$ We say that $L$ is a \textit{potential
prime-to-$p$ extension of $K$}
if $L$ is a prime-to-$p$ extension over some finite extension field of $K$. 
\end{definition}
\begin{proposition}
\label{Pro:4-0}
Let $A$ be an abelian variety over $K$.
Assume that $A$ has the following property:
for any finite Galois extension $K'$ of $K$, 
the torsion part of $A(K')$ is finite.
Then $A(L)[p^{\infty}]$ is finite for 
any potential prime-to-$p$ Galois extension $L$ of $K$.
\end{proposition}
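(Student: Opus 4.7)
The plan is to prove the equivalent condition $T_p(A)^{G_L}=0$ (using the opening proposition of this subsection) by descending the problem to a finite Galois subextension of $L/K$.

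First I would choose a finite extension $K_0\subset L$ of $K$ with $L/K_0$ a prime-to-$p$ extension, and then replace $K_0$ by its Galois closure over $K$ inside $\bar K$. This closure still lies in $L$ because $L/K$ is Galois, and $L$ remains a prime-to-$p$ extension over it, since every finite subextension $F$ of $L$ over the closure is also a finite subextension of $L/K_0$, so $[F:K_0]$ is prime to $p$ and $[F:\bar{K_0}^{\mathrm{Gal}}]$ divides it. Hence I may assume $K_0/K$ is finite Galois, so that $\mathrm{Gal}(L/K_0)$ is a profinite group which is an inverse limit of finite groups of prime-to-$p$ order.

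Since $L/K$ is Galois, $G_L$ is normal in $G_K$, and hence $M:=T_p(A)^{G_L}$ is a $\mathrm{Gal}(L/K)$-stable free $\mathbb{Z}_p$-submodule of $T_p(A)$, of some finite rank $r$. The key step is to argue that the continuous action $\mathrm{Gal}(L/K_0)\to \mathrm{Aut}_{\mathbb{Z}_p}(M)\simeq GL_r(\mathbb{Z}_p)$ factors through a finite quotient of prime-to-$p$ order: the reduction map $GL_r(\mathbb{Z}_p)\to GL_r(\mathbb{F}_p)$ has pro-$p$ kernel, and its preimage in $\mathrm{Gal}(L/K_0)$ is a closed subgroup of a pro-(prime-to-$p$) group, hence itself pro-(prime-to-$p$); any continuous homomorphism from such a group into a pro-$p$ group is trivial. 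So the action factors through the finite group $GL_r(\mathbb{F}_p)$, with image of prime-to-$p$ order (being a quotient of a pro-(prime-to-$p$) group).

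Let $N\subset\mathrm{Gal}(L/K_0)$ be the kernel of this action, and $N'$ the kernel of the $\mathrm{Gal}(L/K)$-action on $M$. Then $N'$ is normal in $\mathrm{Gal}(L/K)$, and $N=N'\cap\mathrm{Gal}(L/K_0)$ is the intersection of two normal subgroups of $\mathrm{Gal}(L/K)$, hence normal in $\mathrm{Gal}(L/K)$. Therefore $L':=L^N$ is a finite Galois extension of $K$, and $G_{L'}$ acts trivially on $M$, so $M\subset T_p(A)^{G_{L'}}$. By hypothesis $A(L')[p^\infty]$ is finite, i.e., $T_p(A)^{G_{L'}}=0$, so $M=0$, completing the proof. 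The main obstacle is the rigidity fact about continuous pro-(prime-to-$p$) actions on free $\mathbb{Z}_p$-modules of finite rank; the remainder is a descent argument that crucially uses the normality of $G_L$ in $G_K$ to ensure the constructed extension $L'/K$ is Galois.
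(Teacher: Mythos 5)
Your proof is correct. It differs in packaging, though not really in substance, from the paper's: the paper simply observes that every point of $A(L)[p^{\infty}]$ is by definition rational over $K_{A,p}=K(A[p^{\infty}])$, so that $A(L)[p^{\infty}]=A(L\cap K_{A,p})[p^{\infty}]$, and then asserts that $L\cap K_{A,p}$ is a finite Galois extension of $K$, to which the hypothesis applies directly. The finiteness of $L\cap K_{A,p}$ over $K$ is precisely the nontrivial point, and its justification is the fact your argument makes explicit: a pro-(prime-to-$p$) Galois group (here $\mathrm{Gal}(L/K_0)$) admits no nontrivial continuous homomorphism to a pro-$p$ group (here the principal congruence subgroup of $GL_r(\mathbb{Z}_p)$, or equivalently $\mathrm{Gal}(K_{A,p}/K(A[p]))$). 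You run this incompatibility on the module $M=T_p(A)^{G_L}$ and produce the finite Galois extension $L^N$ in place of $L\cap K_{A,p}$; either one does the job, and your version is self-contained where the paper's leaves the key finiteness unproved. Two small remarks: (i) the step ``every finite subextension $F$ of $L/K_0$ has degree prime to $p$'' deserves a word --- take a primitive element $\beta$ of $F/K_0$; it lies in one of the finite prime-to-$p$ extensions whose union is $L$, so $[F:K_0]$ divides a prime-to-$p$ integer; (ii) you could avoid passing to the Galois closure of $K_0$ altogether by working with $N'$, the kernel of the $\mathrm{Gal}(L/K)$-action on $M$: it is normal in $\mathrm{Gal}(L/K)$ and contains the open subgroup $N$, hence is itself open, so $L^{N'}$ is already a finite Galois extension of $K$ trivializing $M$.
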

\begin{proof}
The assertion follows from the facts that
\[
A(L)[p^{\infty}]=A(L\cap K_{A,p})[p^{\infty}]
\]
and $L\cap K_{A,p}$ is a finite Galois extension of $K$.
\end{proof}

Note that we can apply Prop.\ \ref{Pro:4-0} if 
$K$ is one of the following fields:

(i) a finitely generated field over a prime field,

(ii) a finite extension of $\mathbb{Q}_p$.\\ 

\begin{proposition}
\label{Pro:3-10}
Let $A$ be an abelian variety over a finite field $k$ 
of characteristic $p>0$.
Let $k'$ be an algebraic extension of $k$
and $k_p$ the maximal 
pro-$p$-extension of $k$ in $\bar k$.
Assume that $A(\bar k)[p]$ is not trivial.

\noindent
$(1)$ Suppose that $A(\bar k)[p]$ is rational over $k$.
Then $A(k')[p^{\infty}]$ is finite if and only if
$k'$ is a potential prime-to-$p$ extension over $k$.
Furthermore in the other case, $A(k')[p^{\infty}]=A(\bar k)[p^{\infty}]$.

\noindent
$(2)$ $k_{A,p}=k_p(A[p])$.
\end{proposition}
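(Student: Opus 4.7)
The overall strategy is to exploit the key observation that once $A[p]$ is rational over a finite extension $k_0$ of $k$, the restriction $\rho_{A,p}|_{G_{k_0}}$ has image in the pro-$p$ kernel of the reduction map $GL_h(\mathbb{Z}_p) \to GL_h(\mathbb{F}_p)$, and hence factors through the maximal pro-$p$ quotient of $G_{k_0} \simeq \hat{\mathbb{Z}}$. This quotient is simply $\mathbb{Z}_p$, whose closed subgroups form the totally ordered family $\{0\} \cup \{p^a\mathbb{Z}_p\}_{a \ge 0}$, which makes the subsequent analysis essentially elementary.

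For part (1), the hypothesis that $A[p]$ is $k$-rational means $\rho_{A,p}: G_k \to GL_h(\mathbb{Z}_p)$ itself factors through the pro-$p$ quotient $\pi: G_k \twoheadrightarrow \mathbb{Z}_p$. Writing $[k':k]$ as a supernatural number $n = \prod_\ell \ell^{v_\ell(n)}$, one has $G_{k'} = \prod_\ell \ell^{v_\ell(n)}\mathbb{Z}_\ell$ inside $G_k = \hat{\mathbb{Z}}$, whence $\pi(G_{k'}) = p^{v_p(n)}\mathbb{Z}_p$. This is trivial if and only if $v_p(n) = \infty$, if and only if $k'$ fails to be a potential prime-to-$p$ extension of $k$; in that case $\rho_{A,p}|_{G_{k'}}$ is trivial, so $A(\bar k)[p^\infty] \subset A(k')$ and the claimed equality follows. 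Otherwise $\pi(G_{k'}) = p^a\mathbb{Z}_p$ for some finite $a\ge 0$, and $\rho_{A,p}(G_{k'})$ is topologically generated by $\rho_{A,p}(\phi)^{p^a}$ for a Frobenius $\phi \in G_k$; the Riemann hypothesis for abelian varieties over $k$ says the eigenvalues of $\rho_{A,p}(\phi)$ on $V_p(A)$ have absolute value $|k|^{1/2}$, so those of $\rho_{A,p}(\phi)^{p^a}$ have absolute value $|k|^{p^a/2} \ne 1$. Hence $V_p(A)^{G_{k'}} = T_p(A)^{G_{k'}} = 0$, and the first proposition of this subsection then yields the finiteness of $A(k')[p^\infty]$.

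For part (2), I apply the same pro-$p$ factorization to $A$ viewed over the finite field $k(A[p])$, over which $A[p]$ is by construction rational. Then $\rho_{A,p}|_{G_{k(A[p])}}$ factors through the pro-$p$ quotient of $G_{k(A[p])}$, so the fixed field $k_{A,p}$ of $\ker\rho_{A,p}$ is contained in the maximal pro-$p$ extension of $k(A[p])$, namely the unique $\mathbb{Z}_p$-extension $k_p \cdot k(A[p]) = k_p(A[p])$. For the reverse inclusion, the assumption $A(\bar k)[p]\neq 0$ forces the $p$-rank $h\ge 1$, so $A(\bar k)[p^\infty]$ is infinite; since this group is contained in $A(k_{A,p})$ and every finite extension of $k$ carries only finitely many rational points on $A$, $k_{A,p}$ must be an infinite extension of $k(A[p])$. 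The only closed subgroup of $\mathbb{Z}_p$ with infinite quotient is $\{0\}$, so $k_{A,p}$ coincides with the full $\mathbb{Z}_p$-extension $k_p(A[p])$.

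The only real obstacle is bookkeeping: one must correctly identify the image $\pi(G_{k'})$ in terms of the supernatural degree $[k':k]$, and then apply the Weil bound uniformly in $a$ to exclude $1$ as an eigenvalue of $\rho_{A,p}(\phi)^{p^a}$. These steps are routine once set up, but deserve care because the definition of \emph{potential} prime-to-$p$ extension involves passing to an auxiliary finite subextension before measuring the $p$-part of the degree.
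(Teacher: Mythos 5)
Your proof is correct, and for part (2) and the ``infinite'' direction of part (1) it is essentially the paper's argument made explicit: you factor $\rho_{A,p}$ through the maximal pro-$p$ quotient $\mathbb{Z}_p$ of $G_{k(A[p])}$, note that $k_{A,p}$ is infinite over $k(A[p])$ because $A(\bar k)[p^\infty]\simeq(\mathbb{Q}_p/\mathbb{Z}_p)^h$ with $h\ge 1$, and use that $\{0\}$ is the only closed subgroup of $\mathbb{Z}_p$ of infinite index; this is precisely what lies behind the paper's terse assertions that $k_p=k_{A,p}$ under the rationality hypothesis and that $k_{A,p}=k'_p$ for $k'=k(A[p])$. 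Where you genuinely diverge is the finiteness direction of (1): the paper invokes Prop.~\ref{Pro:4-0}, i.e.\ $A(k')[p^{\infty}]=A(k'\cap k_{A,p})[p^{\infty}]$ with $k'\cap k_{A,p}$ finite over $k$, which needs nothing beyond the trivial finiteness of $A(k'')$ for finite fields $k''$; you instead identify $\rho_{A,p}(G_{k'})$ as the closure of $\langle\rho_{A,p}(\phi)^{p^{a}}\rangle$ and kill the fixed space via the Riemann hypothesis for abelian varieties over finite fields. Your route is heavier machinery for this one step, but it buys the sharper conclusion $T_p(A)^{G_{k'}}=0$ directly and avoids the intersection bookkeeping; the one point you should cite with care is that the Frobenius eigenvalues on the \emph{$p$-adic} Tate module (the unit-root part, of rank $h\le\dim A$ rather than $2\dim A$) are still Weil numbers of complex absolute value $|k|^{1/2}$, which is true but slightly less standard than the $\ell\neq p$ statement. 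Both arguments are complete.
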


\begin{proof}
(1) First we note that $k_p=k_{A,p}$
because $A(\bar k)[p]\not =0$ and $k$ is a finite field
of characteristic $p$.
If $k'$ is a potential prime-to-$p$ extension over $k$,
Prop.\ \ref{Pro:4-0} implies that $A(k')[p^{\infty}]$ is finite.
In the other case, $k'$ contains $k_p$ and hence 
$A(k')[p^{\infty}]$ is infinite.

\noindent
(2) If we put $k':=k(A[p])$, we see that 
$k_{A,p}$ coincides with $k'_p$, 
the maximal pro-$p$-extension of $k'$.  
Since $k'_p=k_p(A[p])$,
we have done.
\end{proof}

\subsection{Finiteness theorems for 
abelian varieties with ordinary good reduction}
Let $K$ be a finite extension field of $\mathbb{Q}_p$
with integer ring $\cO_K$ and residue field $k$.  
Let $I_K$ be the inertia subgroup of $G_K$.  
If $A$ has good reduction over $K$,
we denote by $\tilde A$ the reduction of $A$ over $k$. 
For any $p$-divisible group $G$ over $\cO_K$,
we denote its Tate module, Tate comodule,
connected component and maximal \'etale quotient by
$T_p(G)$, $\Phi_p(G)$, $G^0$ and $G^{\aet}$, 
respectively and put $V_p(G):=T_p(G)\otimes_{\mathbb{Z}_p} \mathbb{Q}_p$.

Before proving Thm.\ \ref{Thm:1-1} given in the Introduction, 
we shall show the proposition  
related with the matrix of the representation attached to abelian varieties
by an analogous proof
due to Conrad (\cite{Co97}, Thm.\ 1.1).
\begin{proposition}
\label{Pro:4-2} Let $A$ be an abelian variety over $K$ of dimension $d$
which has good reduction.

\noindent
$(1)$ The representation 
${\rho}_{A,p}$ has the form
\[
\left(\begin{matrix}
  S_A & U_A \\  0 & T_A
  \end{matrix}\right)
\]
with respect to a suitable basis of $T_p(A)$. Here,
for some integer $0\leq f\leq d $,
 
$(\mathrm{i})$ $S_A:G_{K}\to GL_{2d-f}(\mathbb{Z}_{p})$ 
is a continuous homomorphism,

$(\mathrm{ii})$ $T_A:G_{K}\to GL_{f}(\mathbb{Z}_{p})$  
is an unramified continuous homomorphism and

$(\mathrm{iii})$ $U_A:G_{K}\to M_{2d-f,f}(\mathbb{Z}_{p})$ is a map.

\noindent
$(2)$ If $A$ has ordinary good reduction over $K$, 
then $f=d$ 
and 
$S_A|_{I_{K}}$ is conjugate with the 
direct sum of the $p$-adic cyclotomic characters $\e^{\oplus d}$.

\noindent
$(3)$ If $A$ has ordinary good reduction over $K$,  
the map $S_A$ is conjugate with the map 
$\e\cdot ({^t}T_{A^{\vee}})^{-1}$. 
\end{proposition}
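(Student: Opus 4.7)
The natural approach is to pass to the $p$-divisible group $G := \mathcal{A}[p^{\infty}]$ attached to the N\'eron model $\mathcal{A}/\cO_K$ (an abelian scheme by the good reduction hypothesis) and to exploit its canonical $G_K$-equivariant connected-\'etale exact sequence
\[
0 \to G^0 \to G \to G^{\aet} \to 0.
\]
For (1), I would set $f$ to be the height of $G^{\aet}$ and choose a basis of $T_p(A) = T_p(G)$ extending a basis of $T_p(G^0)$. Since $G^{\aet}$ is \'etale over $\cO_K$, the quotient Tate module $T_p(G^{\aet})$ is an unramified $\mathbb{Z}_p$-representation of $G_K$, which yields the block triangular form with $T_A$ acting on $T_p(G^{\aet})$ unramifiedly and $S_A$ acting on the rank-$(2d-f)$ lattice $T_p(G^0)$. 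The bound $f \leq d$ follows from the fact that $G$ has dimension $d$ entirely supported on $G^0$, which forces $\dim G^0 = d$ and hence height of $G^0$ at least $d$.

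For (2), ordinary good reduction means $\tilde A[p](\bar k)$ has exactly $p^d$ elements, i.e.\ the height of $G^{\aet}$ equals $d$; hence $f = d$ and $G^0$ has both dimension and height equal to $d$, so it is a $p$-divisible group of multiplicative type over $\cO_K$. Over $\cO_{K^{\rur}}$ every multiplicative $p$-divisible group of height $d$ is (non-canonically) isomorphic to $\mu_{p^{\infty}}^{\oplus d}$, so upon restriction to $I_K$ the representation $S_A$ becomes conjugate to $\e^{\oplus d}$, as claimed.

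For (3), I would invoke Cartier/Serre duality of $p$-divisible groups, which interchanges the connected and \'etale parts, combined with the Weil pairing identification $G^{\vee} \cong \mathcal{A}^{\vee}[p^{\infty}]$ of the dual $p$-divisible group with that of the dual abelian variety. This produces a canonical $G_K$-equivariant perfect pairing
\[
T_p(G^0) \times T_p\bigl((\mathcal{A}^{\vee}[p^{\infty}])^{\aet}\bigr) \to \mathbb{Z}_p(1).
\]
Thus $T_p(G^0)$ is identified $G_K$-equivariantly with $\mathrm{Hom}_{\mathbb{Z}_p}(T_p((\mathcal{A}^{\vee}[p^{\infty}])^{\aet}), \mathbb{Z}_p) \otimes \mathbb{Z}_p(1)$; writing this in matrix form with respect to dual bases yields precisely that $S_A$ is conjugate to $\e \cdot ({}^t T_{A^{\vee}})^{-1}$.

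The content is entirely standard from $p$-divisible group theory (connected-\'etale sequence, classification of multiplicative $p$-divisible groups, and Cartier duality), so the main obstacle is not conceptual but one of bookkeeping: one must organize the bases of $T_p(A)$ and $T_p(A^{\vee})$ compatibly so that the matrix identities in (2) and (3) are meaningful, which is the reason the assertions are stated only up to conjugation.
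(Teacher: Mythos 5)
Your proposal is correct and follows essentially the same route as the paper: part (1) via the connected-\'etale sequence of the $p$-divisible group of the N\'eron model, part (2) via the fact that ordinarity forces the connected part to be of multiplicative type and hence isomorphic to $\mu_{p^{\infty}}^{\oplus d}$ over the ring of integers of the completed maximal unramified extension (the paper cites Mazur for the equivalent statement $\widehat{\cA}\simeq\widehat{\mathbb{G}}_m^{\oplus d}$ there), and part (3) via the Weil pairing together with the connected-\'etale decomposition. The only divergence is in (3): the paper obtains the identification $V_p(\cA(p)^{\aet})^{\vee}\simeq V_p(\cA^{\vee}(p)^0)(-1)$ by dualizing and Tate-twisting the two connected-\'etale sequences and taking $I_K$-invariants (using part (2)), whereas you invoke directly that Cartier duality swaps the connected and \'etale pieces; note that this swap is \emph{not} a general feature of duality (it fails, e.g., in the supersingular case) --- it holds here precisely because ordinarity makes $G^0$ multiplicative, so $(G^0)^{\vee}$ is \'etale, while $(G^{\aet})^{\vee}$ is always connected, and uniqueness of the connected-\'etale sequence then gives $(G^{\vee})^{\aet}=(G^0)^{\vee}$. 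With that one justification made explicit, your perfect pairing $T_p(G^0)\times T_p\bigl((\cA^{\vee}[p^{\infty}])^{\aet}\bigr)\to\mathbb{Z}_p(1)$ yields exactly $S_A\sim\e\cdot({}^tT_{A^{\vee}})^{-1}$, so the argument is complete.
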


Here, explicitly, the map $\e^{\oplus d}:G_K\to GL_d(\mathbb{Z}_p)$ is 
given by the equation
\[
\e^{\oplus d}(\sigma)=\mathrm{diag}(\e(\sigma))\in 
GL_{d}(\mathbb{Z}_p), 
\]
the diagonal matrix with coefficients $\e(\sigma)$
for all $\sigma\in G_K$,
and for any map $T:G_K\to GL_d(\mathbb{Z}_p)$,
we denote by  ${^t}T$ the map $G_K\to GL_d(\mathbb{Z}_p)$ defined by 
\[
{^t}T(\sigma):={^t}(T(\sigma))\in GL_d(\mathbb{Z}_p) 
\]
for all $\sigma\in G_K$, where ${^t}(T(\sigma))$ is the transposed matrix
of $T(\sigma)$. 
\begin{proof}[Proof of Prop.\ \ref{Pro:4-2}]
(1) Let $\cA$ be the N\'eron model of $A$ over $\cO_K$
and $\cA(p)$ the $p$-divisible group associated to $\cA$.
The connected-\'etale sequence of $\cA(p)$ 
induces the exact sequence
\[
 0\to V_p(\cA(p)^0)\to V_p(\cA(p))\to
 V_p(\cA(p)^{\aet})\to 0
\]
of $\mathbb{Q}_p[G_K]$-modules.
The desired decomposition of $\rho_{A,p}$
can be obtained by this sequence.

\noindent
(2) Assume the reduction type of $\cA$ over $\cO_K$ is ordinary.
Let $\hat {K^{ur}}$ be the completion of 
the maximal unramified extension $K^{ur}$ over $K$ 
and $\cO_{\hat {K^{ur}}}$ the integer ring of $\hat {K^{ur}}$.
Then $\widehat {\cA}_{\cO_{\hat {K^{ur}}}}:=
\widehat {\cA}\times_{\cO_K} \mathrm{Spf}({\cO_{\hat {K^{ur}}}})$
is isomorphic to 
$\widehat {\mathbb{G}}_{m}^{\oplus d}$ over $\cO_{\hat {K^{ur}}}$,
where $\widehat \cA$ is the formal completion of $\cA$ along its zero section
and $\widehat {\mathbb{G}}_m$ is the formal multiplicative group over 
$\cO_{\hat {K^{ur}}}$ (cf.\ \cite{Ma72}, Lem.\ 4.26 and Lem.\ 4.27).
This implies the assertion $(2)$.

\noindent
(3) Consider the following two exact sequences as $G_K$-modules;
\begin{align*}
&0\to V_p(\cA(p)^{\aet})^{\vee}\to 
V_p(\cA(p))^{\vee}\to V_p(\cA(p)^0)^{\vee}\to 0,\\
&0\to V_p(\cA^{\vee}(p)^0)(-1)\to V_p(A^{\vee})(-1)
\to V_p(\cA^{\vee}(p)^{\aet})(-1)\to 0, 
\end{align*}
where 
$M^{\vee}=\mathrm{Hom}_{\mathbb{Q}_p}(M,\mathbb{Q}_p)$ is 
the dual of a $\mathbb{Q}_p[G_K]$-module $M$ and 
$M(i)$ is the $i$-th Tate twist of $M$. 
Note that $V_p(\cA(p))^{\vee}\simeq V_p(A^{\vee})(-1)$ as $G_K$-modules.
By taking the functor $\mathrm{H}^0(I_K,-)$ of
the above exact sequences and 
using the assertion $(2)$,
we can see that 
\[
V_p(\cA(p)^{\aet})^{\vee}\simeq 
(V_p(\cA(p))^{\vee})^{I_K}\simeq (V_p(A^{\vee})(-1))^{I_K}\simeq
V_p(\cA^{\vee}(p)^0)(-1)
\]
as $G_K$-modules,
since $I_K$ acts on $V_p(\cA(p)^{\aet})^{\vee}$ and 
$V_p(\cA^{\vee}(p)^0)(-1)$ trivially, and  
$I_K$ acts on $V_p(\cA(p)^0)^{\vee}$ and $V_p(\cA^{\vee}(p)^{\aet})(-1)$ 
by $(\e^{\oplus d})^{-1}$.
We know that the group $G_K$ acts on $V_p(\cA(p)^{\aet})^{\vee}(1)$
by $\e\cdot ({^t}T_A)^{-1}$ and also 
acts on $V_p(\cA^{\vee}(p)^0)$ by $S_{A^{\vee}}$.
Therefore, we see that  
$\e\cdot ({^t}T_A)^{-1}$ is conjugate with $S_{A^{\vee}}$ and
thus we finish the proof 
of the assertion $(3)$ after replacing $A$ with $A^{\vee}$.  
\end{proof}

\begin{remark}
\label{Rem:4-1}

\noindent
(1) The above proof implies the following:
the integer $f$ is equal to the dimension of $V_p(\tilde A)$
and 
the map $S_A$ is the natural continuous homomorphism
\[
G_K\to GL(T_p({\cA(p)^0}))\simeq GL_{2d-f}(\mathbb{Z}_p)  
\]
and the map $T_A$ is the natural continuous homomorphism
\[
G_K\to GL(T_p(\tilde A))\simeq GL_{f}(\mathbb{Z}_p).
\]

\noindent
(2) Suppose that $A$ has ordinary good reduction over $K$.
We denote the semi-simplification of $\rho_{A,p}$ by $\rho_{A,p}^{ss}$.
The criterion of N\'eron-Ogg-Shafarevich implies that
the representation $\rho_{A,p}^{ss}|_{G_L}$ 
factors through $G_{k_L}$ for any algebraic extension $L$ of $K$ such that 
$L$ contains all $p$-power roots of unity. 
\end{remark}
Now we can show our main finiteness theorem
which is given in the Introduction. 

\begin{theorem}
\label{Thm:4-1} 
Let $A$ be an abelian variety over $K$
which has potential ordinary good reduction.
Let $L$ be an algebraic extension of $K$.

\noindent
$(1)$ Assume that the residue field of $L(\mu_{p^{\infty}})$ is a potential 
prime-to-$p$ extension of $k$.
Then $A(L)[p^{\infty}]$ is finite.

\noindent
$(2)$ Assume that $L(\mu_{p^{\infty}})$ is a Galois extension of $K$  
whose residue field is finite.
Then the torsion part of $A(L)$ is finite. 
 
\end{theorem}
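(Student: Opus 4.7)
The plan is to reduce to proving $T_p(A)^{G_{L'}}=0$ for $L':=L(\mu_{p^{\infty}})$, which by the proposition on Tate modules implies $A(L')[p^\infty]$ (and hence $A(L)[p^\infty]$) is finite. After replacing $K$ by a sufficiently large finite extension I may assume that $A$ has ordinary good reduction over $K$, that both $\tilde A[p]$ and $\widetilde{A^{\vee}}[p]$ are rational over $k$, and, for part (1), that $k_{L'}$ is an actual (not merely potential) prime-to-$p$ extension of $k$; none of these enlargements affect the conclusion.

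For part (1), I work with the exact sequence of $\mathbb{Q}_p[G_K]$-modules
\[
0\to V_p(\cA(p)^0)\to V_p(A)\to V_p(\cA(p)^{\aet})\to 0
\]
from the proof of Prop.\ \ref{Pro:4-2}(1), and check that both outer terms have vanishing $G_{L'}$-invariants. On the quotient $V_p(\cA(p)^{\aet})=V_p(\tilde A)$ the action is $T_A$, which is unramified, so $G_{L'}$ acts through $G_{k_{L'}}$; Prop.\ \ref{Pro:3-10}(1) applied to $\tilde A$ yields $\tilde A(k_{L'})[p^{\infty}]$ finite, hence $V_p(\tilde A)^{G_{L'}}=0$. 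On the submodule $V_p(\cA(p)^0)$, Prop.\ \ref{Pro:4-2}(3) identifies $S_A$ with $\varepsilon\cdot({}^{t}T_{A^{\vee}})^{-1}$; since $L'\supset K(\mu_{p^{\infty}})$ kills $\varepsilon$, the restriction to $G_{L'}$ is the contragredient of $T_{A^{\vee}}|_{G_{k_{L'}}}$. Because $G_{k_{L'}}$ is a closed subgroup of $\mathrm{Gal}(\bar k/k)=\widehat{\mathbb{Z}}$, it is topologically procyclic, so for any finite-dimensional continuous representation $W$ of $G_{k_{L'}}$ one has $\dim W^{G_{k_{L'}}}=\dim W-\mathrm{rank}(g-1)=\dim(W^{\vee})^{G_{k_{L'}}}$ for a topological generator $g$. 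Applying Prop.\ \ref{Pro:3-10}(1) to $\widetilde{A^{\vee}}$ gives $V_p(\widetilde{A^{\vee}})^{G_{k_{L'}}}=0$, hence $V_p(\cA(p)^0)^{G_{L'}}=0$. The sequence then yields $V_p(A)^{G_{L'}}=0$.

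For part (2), the hypothesis makes $k_{L'}$ a finite extension of $k$, which is automatically potential prime-to-$p$ (factor the degree as $p^{a}m$ with $\gcd(m,p)=1$ and pass through the unique degree-$p^{a}$ subextension); so part (1) gives finiteness of $A(L)[p^{\infty}]$. For $\ell\neq p$ choose a finite $K'/K$ over which $A$ has good reduction. By N\'eron--Ogg--Shafarevich $\rho_{A,\ell}|_{G_{K'}}$ is unramified, so $\rho_{A,\ell}|_{G_{LK'}}$ factors through $G_{k_{LK'}}$, and $k_{LK'}=k_L\cdot k_{K'}$ is finite. The Weil conjectures give that Frobenius acts on $V_{\ell}(A)$ with eigenvalues of absolute value $\sqrt{|k_{LK'}|}>1$; in particular the characteristic polynomial $P(T)\in\mathbb{Z}[T]$ satisfies $P(1)\neq 0$. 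This forces $V_{\ell}(A)^{G_{LK'}}=0$ for every $\ell\neq p$, so $A(L)[\ell^{\infty}]$ is finite, and $A[\ell]^{G_{LK'}}=0$ whenever $\ell\nmid P(1)$, so $A(L)[\ell^{\infty}]=0$ for almost all $\ell$. Combining with the $p$-case proves the torsion part of $A(L)$ is finite.

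The main obstacle is handling the connected part $V_p(\cA(p)^0)$, which over $K$ is wildly ramified through $\varepsilon^{\oplus d}$: the point is that Prop.\ \ref{Pro:4-2}(3) lets one exchange this wild ramification (which vanishes after adjoining $\mu_{p^{\infty}}$) for an unramified action dual to $V_p(\widetilde{A^{\vee}})$, reducing to the finite-field statement of Prop.\ \ref{Pro:3-10}(1); once this duality step is in hand the rest is bookkeeping.
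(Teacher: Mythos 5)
Your part (1) is correct and is essentially the paper's proof: both reduce to $V_p(A)^{G_{L(\mu_{p^\infty})}}=0$ via the connected--\'etale sequence, use Prop.~\ref{Pro:4-2}(3) to identify the action on the connected part over $L(\mu_{p^{\infty}})$ with the contragredient of the unramified $T_{A^{\vee}}$, and then invoke finiteness of $\tilde A(k_{L'})[p^{\infty}]$ and $\widetilde{A^{\vee}}(k_{L'})[p^{\infty}]$ over the potential prime-to-$p$ residue extension (the paper phrases your ``invariants of sub and quotient vanish'' step as ``$S_A(\sigma_L)$ and $T_A(\sigma_L)$ have no eigenvalue $1$''). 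In part (2) you treat the prime-to-$p$ torsion by Weil numbers instead of the paper's reduction to the finite extension $L(\mu_{p^{\infty}})\cap K^{\mathrm{ur}}$ plus Mattuck, but this is a routine variant and both arguments are sound.
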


\begin{proof}
$(1)$ 
By extending $K$ and $L$,
we may assume that $A$ has ordinary good reduction over $K$
and $L$ contains $K(\mu_{p^{\infty}})$. 
Put $d:=\mathrm{dim}(A)$ and 
denote by $k_L$ the residue field of $L$.
We have the following form of 
the representation $\rho_{A,p}$ for a suitable 
basis of the $\mathbb{Z}_{p}$-module $T_p(A)$:
\[
\left(\begin{matrix}
  S_A & U_A \\  0 & T_A
  \end{matrix}\right),
\]
with certain
$S_A=\e\cdot ({^t}T_{A^{\vee}})^{-1}
:G_{K}\to GL_{d}(\mathbb{Z}_{p})$, $T_A:G_{K}\to GL_{d}(\mathbb{Z}_{p})$ 
and $U_A:G_{K}\to M_d(\mathbb{Z}_{p})$ as in Prop.\ \ref{Pro:4-2}.
In this proof,
we shall fix the above basis and 
identify $V_p(A)$ with $\mathbb{Q}_p^{\oplus 2d}=M_{2d,1}(\mathbb{Q}_p)$.
Since $L$ contains all $p$-power roots of unity,
we know that the maps $S_A|_{G_L}$ and $T_A|_{G_L}$ 
factor through the absolute Galois group 
$G_{k_L}:=\mathrm{Gal}(k^{\mathrm{sep}}/k_L)$ of $k_L$.
Thus the maps $S_A|_{G_L}$ and $T_A|_{G_L}$ are determined
by a topological generator $\sigma_L$ of $G_{k_L}$. 
To prove $V_p(A)^{G_L}=0$,
it suffices to show that the matrices $S_A(\sigma_L)$ and 
$T_A(\sigma_L)$ do not have eigenvalue $1$.
First we assume that this assertion about $T_A(\sigma_L)$ is false.  
Then the Tate module $V_p(\tilde A)^{G_L}$ is not zero

and hence $\tilde A(k_L)[p^{\infty}]$ is infinite.
By Prop.\ \ref{Pro:4-0}, 
this contradicts the assumption that the field $k_L$ is 
potential prime-to-$p$ extension of $k$.
Next we show that $S_A(\sigma_L)$ does not have eigenvalue $1$.
Since $L$ contains $K(\mu_{p^{\infty}})$ and 
$A$ has ordinary good reduction,
the map $S_A|_{G_L}$ coincides with
$({^t}T_{A^{\vee}}|_{G_L})^{-1}$.
Hence if we assume $S_A(\sigma_L)$ has eigenvalue $1$,
then $T_{A^{\vee}}(\sigma_L)$ also has eigenvalue $1$.
But this induces a contradiction by the same argument as the above.

\noindent
$(2)$ 
The criterion of N\'eron-Ogg-Shafarevich implies that  
\[
A(L)'\subset A(L(\mu_{p^{\infty}}))'=
A(L(\mu_{p^{\infty}})\cap K^{\mathrm{ur}})'
\]
where the symbol $'$ means the prime-to-$p$ part. 
Hence 
$A(L)'$ is finite because $L(\mu_{p^{\infty}})$ has a finite 
residue field.
Therefore, the assertion $(1)$
implies that the torsion points of $A(L)$ is finite. 
\end{proof}


\begin{corollary}
\label{Cor:3-11}
Let $A$ be an abelian variety over $K$
which has ordinary good reduction.
Let $L$ be a Galois extension of $K$ with residue field $k_L$.
Assume that $L$ contains $K(\mu_{p^{\infty}})$ and $K(A[p])$.
Then the followings are equivalent $:$

\noindent
$(1)$ $A(L)[p^{\infty}]$ is finite,

\noindent
$(2)$ $A^{\vee}(L)[p^{\infty}]$ is finite,

\noindent
$(3)$ $\tilde A(k_L)[p^{\infty}]$ is finite,

\noindent
$(4)$ $\tilde {A^{\vee}}(k_L)[p^{\infty}]$ is finite,

\noindent
$(5)$ $k_L$ is a potential prime-to-$p$ extension over $k$.
\end{corollary}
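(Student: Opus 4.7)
The plan is to show all five conditions are equivalent to $(5)$ by combining Prop.~\ref{Pro:3-10}(1), Thm.~\ref{Thm:4-1}(1), and the duality from Prop.~\ref{Pro:4-2}(3); we may assume $d=\dim A\geq 1$.

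First I would check that both $\tilde A(\bar k)[p]\subset\tilde A(k_L)$ and $\tilde{A^{\vee}}(\bar k)[p]\subset\tilde{A^{\vee}}(k_L)$. The hypothesis $K(A[p])\subset L$ means $G_L$ acts trivially on $A(\bar K)[p]$, and since the reduction map $A(\bar K)[p]\twoheadrightarrow \tilde A(\bar k)[p]$ arising from the connected-\'etale sequence of $\cA[p]$ is a $G_L$-equivariant surjection, the induced action on $\tilde A(\bar k)[p]$ is trivial. For the dual, the perfect Galois-equivariant Weil pairing $A[p]\times A^{\vee}[p]\to\mu_p$, together with the triviality of the $G_L$-action on $A[p]$ (by hypothesis) and on $\mu_p$ (since $L\supset K(\mu_{p^{\infty}})$), forces $G_L$ to act trivially on $A^{\vee}[p]$. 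Hence $K(A^{\vee}[p])\subset L$, and the same reduction argument yields $\tilde{A^{\vee}}(\bar k)[p]\subset\tilde{A^{\vee}}(k_L)$.

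With these preliminaries, Prop.~\ref{Pro:3-10}(1) applied to $\tilde A$ (resp.\ $\tilde{A^{\vee}}$) over the residue field of $K(A[p])$ (resp.\ $K(A^{\vee}[p])$), which is a finite extension of $k$, immediately gives the equivalences $(3)\Leftrightarrow(5)$ and $(4)\Leftrightarrow(5)$. The implications $(5)\Rightarrow(1)$ and $(5)\Rightarrow(2)$ are then Thm.~\ref{Thm:4-1}(1), the latter applied to $A^{\vee}$ (which also has ordinary good reduction by Cartier duality of the associated $p$-divisible groups).

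The central step is $(1)\Rightarrow(5)$, which I would prove contrapositively. Suppose $(5)$ fails; then $(4)$ also fails, so $\tilde{A^{\vee}}(k_L)[p^{\infty}]$ is infinite, and the ``furthermore'' clause of Prop.~\ref{Pro:3-10}(1) gives $\tilde{A^{\vee}}(k_L)[p^{\infty}]=\tilde{A^{\vee}}(\bar k)[p^{\infty}]$, so $T_{A^{\vee}}|_{G_L}$ is the trivial representation. By Prop.~\ref{Pro:4-2}(3), $S_A$ is conjugate to $\e\cdot({^t}T_{A^{\vee}})^{-1}$; restricting to $G_L$ and using $\e|_{G_L}=1$ (since $L\supset K(\mu_{p^{\infty}})$), we conclude that $S_A|_{G_L}$ is trivial as well. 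Hence $V_p(\cA(p)^0)^{G_L}=V_p(\cA(p)^0)$, a nonzero $G_K$-submodule of $V_p(A)$ of rank $d$, whence $V_p(A)^{G_L}\neq 0$ and $(1)$ fails. The implication $(2)\Rightarrow(5)$ is the symmetric argument, using the triviality of $T_A|_{G_L}$ in place of $T_{A^{\vee}}|_{G_L}$.

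The main obstacle is $(1)\Rightarrow(5)$: the connected-\'etale sequence $0\to V_p(\cA(p)^0)\to V_p(A)\to V_p(\tilde A)\to 0$ does not allow one to infer $V_p(\tilde A)^{G_L}=0$ directly from $V_p(A)^{G_L}=0$, since the functor of $G_L$-invariants is only left exact. The essential trick is that Prop.~\ref{Pro:4-2}(3) translates the obstruction to the dual abelian variety, where Prop.~\ref{Pro:3-10}(1) delivers the strong triviality needed once the cyclotomic character has been killed on $G_L$.
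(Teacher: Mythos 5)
Your proof is correct and follows essentially the same route as the paper: Prop.~\ref{Pro:3-10} for $(3)\Leftrightarrow(4)\Leftrightarrow(5)$, Thm.~\ref{Thm:4-1}(1) for $(5)\Rightarrow(1)$, and the duality $S_A=\e\cdot({^t}T_{A^{\vee}})^{-1}$ of Prop.~\ref{Pro:4-2}(3) to transfer the failure of $(4)$ into a nonzero $G_L$-fixed subspace of $V_p(A)$. The only (harmless) variation is that you invoke the ``furthermore'' clause of Prop.~\ref{Pro:3-10}(1) to get full triviality of $T_{A^{\vee}}|_{G_L}$, where the paper only uses that $T_{A^{\vee}}(\sigma_L)$ has eigenvalue $1$; you are also more explicit than the paper in checking the rationality hypotheses via the Weil pairing.
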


Note that the Weil-paring implies that
$L$ contains $K(\mu_{p^{\infty}})$ and $K(A[p])$
if and only if 
$L$ contains $K(\mu_{p^{\infty}})$ and $K(A^{\vee}[p])$
 
\begin{proof}
The equivalence of assertions $(3)$, $(4)$ and $(5)$
follows from Prop.\ \ref{Pro:3-10}, thus 
it is enough to show that $(1)$ is equivalent to 
$(3)$, $(4)$ and $(5)$. 
By Thm.\ \ref{Thm:4-1} (1), the condition $(5)$ implies
the condition $(1)$.

Let us assume that the condition $(4)$ is not satisfied.
Then we know that 
$T_{A^{\vee}}(\sigma_L)$ has eigenvalue $1$,
where $T_{A^{\vee}}$ is a natural unramified homomorphism
$G_K\to GL(T_p(\tilde {A^{\vee}}))$ and 
$\sigma_L$ is a topological generator of $G_{k_L}$.
Hence $\e\cdot ({^t}T_{A^{\vee}})^{-1}(\sigma_L)$ 
has also eigenvalue $1$.
By using this fact and Prop.\ \ref{Pro:4-2}, we can see that 
$T_p(A)^{G_L}$ is not trivial. Therefore, 
the group $A(L)[p^{\infty}]$ is infinite.  
\end{proof}

\subsection{$K^{\mathrm{ur}}$-rational points}
We continue to use the same notation $p$, $K$, $k$, $A$ and $d$ 
as in the previous subsection.   
In this subsection, 
we consider some relations of 
$\rho_{A,p}:G_K\to GL(V_p(A))\simeq GL_{2d}(\mathbb{Q}_p)$ 
with $A(K^{\mathrm{ur}})[p^{\infty}]$.
Our goal in this subsection is to prove that 
\begin{theorem}
\label{Thm:3-12}
Let $E$ be an elliptic curve over $K$ which has ordinary
good reduction. Then the followings are equivalent;

\noindent
$(1)$ $\rho_{E,p}$ is abelian,

\noindent
$(2)$ $\rho_{E,p}|_{I_K}$ is abelian,

\noindent
$(3)$ $E(K^{\mathrm{ur}})[p^{\infty}]$ is infinite.
\end{theorem}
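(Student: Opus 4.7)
The plan is to base everything on the concrete upper-triangular form of $\rho_{E,p}$ supplied by Prop.\ \ref{Pro:4-2}. Since $E$ has ordinary good reduction and $\dim E = 1$, I can fix a basis $e_1, e_2$ of $T_p(E)$ with respect to which
\[
\rho_{E,p}(g)=\begin{pmatrix} S_E(g) & U_E(g) \\ 0 & T_E(g) \end{pmatrix},
\]
where $T_E$ is unramified, $S_E|_{I_K}=\varepsilon$, and the line spanned by $e_1$ is $T_p(\mathcal{E}(p)^0)$. The implication (1)$\Rightarrow$(2) is trivial, so I would establish the cycle by proving (3)$\Rightarrow$(1) and (2)$\Rightarrow$(3).

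For (3)$\Rightarrow$(1), the key observation is that $V_p(\mathcal{E}(p)^0)|_{I_K}\cong \mathbb{Q}_p(1)$ has no nonzero $I_K$-fixed vector, because $\varepsilon|_{I_K}$ has infinite image (since $K^{\mathrm{ur}}(\mu_{p^{\infty}})/K^{\mathrm{ur}}$ is infinite). As $I_K$ is normal in $G_K$, the subspace $V_p(E)^{I_K}$ is automatically a $G_K$-subrepresentation. By hypothesis (3) it is nonzero, and by the observation above it intersects $V_p(\mathcal{E}(p)^0)$ trivially. Hence it injects into the one-dimensional quotient $V_p(\tilde E)$ and must itself be one-dimensional, producing a $G_K$-equivariant direct sum decomposition
\[
V_p(E) = V_p(\mathcal{E}(p)^0) \oplus V_p(E)^{I_K}.
\]
Consequently $\rho_{E,p}$ is the direct sum of two characters, and so is abelian.

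For (2)$\Rightarrow$(3), I would compare the two upper-triangular matrices $\bigl(\begin{smallmatrix} \varepsilon(\sigma) & U_E(\sigma) \\ 0 & 1 \end{smallmatrix}\bigr)$ and $\bigl(\begin{smallmatrix} \varepsilon(\tau) & U_E(\tau) \\ 0 & 1 \end{smallmatrix}\bigr)$ for $\sigma,\tau\in I_K$; they commute if and only if $U_E(\tau)(\varepsilon(\sigma)-1)=U_E(\sigma)(\varepsilon(\tau)-1)$. Fixing $\sigma_0\in I_K$ with $\varepsilon(\sigma_0)\neq 1$ and setting $c:=U_E(\sigma_0)/(\varepsilon(\sigma_0)-1)\in \mathbb{Q}_p$, hypothesis (2) forces $U_E(\sigma)=c(\varepsilon(\sigma)-1)$ for every $\sigma\in I_K$. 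A direct check then shows that the vector $v:=-ce_1+e_2\in V_p(E)$ is nonzero and $I_K$-fixed, so $V_p(E)^{I_K}\neq 0$, which is equivalent to (3).

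I expect the main conceptual step to be the (3)$\Rightarrow$(1) part, where one has to notice that the normality of $I_K$ in $G_K$, combined with the absence of $I_K$-invariants in the connected summand $V_p(\mathcal{E}(p)^0)$, upgrades the mere existence of an inertia-fixed vector to a full $G_K$-equivariant splitting of $V_p(E)$. Once that observation is isolated, the remaining work is elementary $2\times 2$ linear algebra based on Prop.\ \ref{Pro:4-2}.
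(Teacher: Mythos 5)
Your proposal is correct, and it splits into two halves of different character relative to the paper. The implication $(2)\Rightarrow(3)$ is essentially the paper's own Prop.~\ref{Prop:3-30}: there, too, abelianity forces $U=(\e-1)U_0$ on $I_K$ and one writes down the explicit inertia-fixed vector, exactly as you do with $v=-ce_1+e_2$. The genuinely different part is $(3)\Rightarrow(1)$. The paper introduces the invariant $e(A)=\dim_{\mathbb{Q}_p}V_p(A(K^{\mathrm{ab}}))-d$, proves in Lem.~\ref{Lem:3-13} that $e(A)=0$ iff $A(K^{\mathrm{ur}})[p^{\infty}]$ is finite by analyzing the representation $\rho_{A,p}^{\mathrm{ur}}$ of $\mathrm{Gal}(K^{\mathrm{ab}}/K^{\mathrm{ur}})$ (which is automatically abelian), and then for an elliptic curve runs the chain $e(E)\neq 0\Rightarrow e(E)=1\Rightarrow V_p(E(K^{\mathrm{ab}}))=V_p(E)\Rightarrow G_{K^{\mathrm{ab}}}\subset G_{K_{E,p}}$. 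You instead observe that $V_p(E)^{I_K}$ is $G_K$-stable by normality of $I_K$, meets $V_p(\mathcal{E}(p)^0)$ trivially because $\e|_{I_K}$ has infinite image, and hence, being nonzero and injecting into the one-dimensional $V_p(\tilde E)$, splits $V_p(E)$ into two $G_K$-stable lines. This is more self-contained (no detour through $K^{\mathrm{ab}}$ and $e(A)$) and gives slightly more, namely that $\rho_{E,p}$ is a direct sum of two characters, consistent with the split-Cartan picture of Prop.~\ref{Pro:5-4}(1); what the paper's route buys in exchange is the intermediate Lem.~\ref{Lem:3-13}, which is stated for abelian varieties of arbitrary dimension. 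Both arguments use $\dim E=1$ in the same essential place: the quotient $V_p(\tilde E)$ (equivalently the bound $e(E)\le 1$) is one-dimensional, which is what upgrades ``some inertia-fixed vector'' to a full splitting.
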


Note that it is known that the condition (1) is equivalent to the
condition 

\noindent
$(4)$ $E$ has complex multiplication over $K$.

\bigskip

See \cite{Se89}, A.2.4 for more information.

Now we start with an argument by proving the fact below;

\begin{proposition}
\label{Prop:3-30}
Let $A$ be an abelian variety over $K$
which has ordinary good reduction.
If $\rho_{A,p}|_{I_K}$ is abelian, then 
$A(K^{\mathrm{ur}})[p^{\infty}]$ is infinite.  
\end{proposition}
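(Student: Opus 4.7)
The plan is to compute $\rho_{A,p}|_{I_K}$ explicitly via Proposition~\ref{Pro:4-2} and show that the abelian hypothesis forces a splitting producing nontrivial $I_K$-invariants in $V_p(A)$; the infiniteness then follows from the first proposition of \S2.1, since $A(K^{\mathrm{ur}})[p^\infty]$ is infinite iff $V_p(A)^{I_K}\neq 0$.

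First, I apply Proposition~\ref{Pro:4-2}. Since $A$ has ordinary good reduction we have $f=d$, $T_A$ is unramified, and $S_A|_{I_K}$ is conjugate to $\varepsilon^{\oplus d}$. Choosing a suitable basis of $T_p(A)$ exhibiting the block decomposition, together with a further conjugation to replace $S_A|_{I_K}$ by the scalar matrix $\varepsilon(\sigma)I_d$, we obtain
\[
\rho_{A,p}|_{I_K}(\sigma)=\begin{pmatrix}\varepsilon(\sigma)I_d & U(\sigma)\\ 0 & I_d\end{pmatrix},
\]
where $U:I_K\to M_d(\mathbb{Z}_p)$ is the restriction of $U_A$.

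Second, I impose the abelian hypothesis. Multiplying out $\rho_{A,p}(\sigma)\rho_{A,p}(\tau)=\rho_{A,p}(\tau)\rho_{A,p}(\sigma)$ for $\sigma,\tau\in I_K$, the diagonal blocks commute automatically, and equating the upper-right blocks yields
\[
(\varepsilon(\sigma)-1)\,U(\tau)=(\varepsilon(\tau)-1)\,U(\sigma),\qquad \sigma,\tau\in I_K.
\]
Since $K$ is a finite extension of $\mathbb{Q}_p$, the extension $K(\mu_{p^\infty})/K$ is infinitely ramified, so there exists $\tau_0\in I_K$ with $\varepsilon(\tau_0)\neq 1$. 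Setting $c:=(\varepsilon(\tau_0)-1)^{-1}U(\tau_0)\in M_d(\mathbb{Q}_p)$, the relation above gives $U(\sigma)=(\varepsilon(\sigma)-1)c$ for every $\sigma\in I_K$.

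Third, the change of basis $P=\begin{pmatrix}I_d & c\\ 0 & I_d\end{pmatrix}\in GL_{2d}(\mathbb{Q}_p)$ cancels the off-diagonal block: a direct computation gives $P\rho_{A,p}(\sigma)P^{-1}=\begin{pmatrix}\varepsilon(\sigma)I_d & 0\\ 0 & I_d\end{pmatrix}$ on $I_K$. Hence $V_p(A)|_{I_K}\simeq \varepsilon^{\oplus d}\oplus\mathbf{1}^{\oplus d}$ and $V_p(A)^{I_K}$ is at least $d$-dimensional. Thus $A(K^{\mathrm{ur}})[p^\infty]$ is infinite. The main obstacle is really just locating the correct structural input: once the \emph{scalarity} of $S_A|_{I_K}$ (guaranteed by ordinary reduction via Prop.~\ref{Pro:4-2}(2)) is in hand, the rest is matrix bookkeeping; without scalarity, say in the supersingular case where $S_A|_{I_K}$ may be irreducible of higher rank, the commutation identity would not collapse into a single one-parameter family for $U$, so the argument is genuinely tied to the ordinary hypothesis.
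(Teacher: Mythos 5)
Your argument is correct and is essentially the paper's own proof: both reduce, via Prop.~\ref{Pro:4-2}(2), to the form $\left(\begin{smallmatrix}\e^{\oplus d} & U\\ 0 & I_d\end{smallmatrix}\right)$ on $I_K$, deduce from commutativity that $U(\sigma)=(\e(\sigma)-1)c$ for a constant matrix $c$ (a step the paper merely asserts and you helpfully spell out, including the existence of $\tau_0$ with $\e(\tau_0)\neq 1$), and then produce $I_K$-invariants. The only cosmetic difference is that you conjugate by $\left(\begin{smallmatrix}I_d & c\\ 0 & I_d\end{smallmatrix}\right)$ to split off $\mathbf{1}^{\oplus d}$, whereas the paper exhibits the explicit fixed vector $\left(\begin{smallmatrix}U_0 1_d\\ 1_d\end{smallmatrix}\right)$.
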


\begin{proof}
By Prop.\ \ref{Pro:4-2}, we have the following form of 
the representation $\rho_{A,p}|_{I_K}$ for a suitable 
basis of the $\mathbb{Q}_{p}$-module $V_p(A)$:
\[
\left(\begin{matrix}
  \e^{\oplus d}& U \\  0 & I_d
  \end{matrix}\right),
\]
where 
$U$ is a map $I_K\to M_d(\mathbb{Q}_{p})$
and $I_d$ is the unit matrix of $d\times d$.
Since $\rho_{A,p}|_{I_K}$ is abelian,
we see that $U=(\e^{\oplus d}-I_d)U_0$ 
for some $U_0\in M_d(\mathbb{Q}_p)$. 
Then, on $I_K$, 
\[ 
\left(\begin{matrix}
  \e^{\oplus d} & U \\  0 & I_d
  \end{matrix}\right) 
\left(\begin{matrix}
  U_01_d \\ 1_d
\end{matrix}\right)
=
\left(\begin{matrix}
  U_01_d \\ 1_d
\end{matrix}\right),
\]
where $1_d={^t}(1,1,\cdots ,1)\in M_{d,1}(\mathbb{Q}_p)$.
Consequently, we know that $V_p(A)^{I_K}\not= 0$ and 
this implies the desired result.
\end{proof}

In the rest of this subsection we always 
assume that $A$ has ordinary good reduction over $K$.
Let $K^{\mathrm{ab}}$ be the maximal abelian extension of $K$ in $\bar K$.
The exact sequence
\[
0\to \mathrm{ker}(r)\to V_p(A)\overset{r}{\to} V_p(\tilde A)\to 0  
\]
gives the exact sequence
\[
0\to \mathrm{ker}(r)\to 
V_p(A)^{G_{K^{\mathrm{ab}}}}\overset{r}{\to} V_p(\tilde A) \qquad (*)
\]
of $\mathbb{Q}_p[\mathrm{Gal}(K^{\mathrm{ab}}/K)]$-modules.
Here we remark that $G_{K^{\mathrm{ab}}}$ acts trivially
on $\mathrm{ker}(r)$ and $V_p(\tilde A)$.
We define the representation $\rho_{A,p}^{\mathrm{ur}}$ by
the natural action of 
$\mathrm{Gal}(K^{\mathrm{ab}}/K^{\mathrm{ur}})$ on 
$V_p(A)^{G_{K^{\mathrm{ab}}}}=V_p(A(K^{\mathrm{ab}}))$;
\[
\rho_{A,p}^{\mathrm{ur}}:\mathrm{Gal}(K^{\mathrm{ab}}/K^{\mathrm{ur}})\to 
GL(V_p(A)^{G_{K^{\mathrm{ab}}}})=GL(V_p(A(K^{\mathrm{ab}}))).
\]
Now we define the integer $e(A)$ to be  
$e(A):=\mathrm{dim}_{\mathbb{Q}_p}
V_p(A(K^{\mathrm{ab}}))-\mathrm{dim}_{\mathbb{Q}_p}\mathrm{ker}(r)
=\mathrm{dim}_{\mathbb{Q}_p}
V_p(A(K^{\mathrm{ab}}))-d$. 
Then $\rho_{A,p}^{\mathrm{ur}}$
is a $p$-adic representation of dimension $d$ + $e(A)$.
Clearly $0\le e(A)\le d$.
Furthermore, the above sequence $(*)$ implies that 
$\rho_{A,p}^{\mathrm{ur}}$ has the following shape for a suitable 
basis of the $\mathbb{Q}_{p}$-module $V_p(A(K^{\mathrm{ab}}))$:
\[
\rho_{A,p}^{\mathrm{ur}}\simeq
\left(\begin{matrix}
  \e^{\oplus d}& U \\  0 & I_{e(A)}
  \end{matrix}\right),
\]
where 
$U$ is a map $\mathrm{Gal}(K^{\mathrm{ab}}/K^{\mathrm{ur}})
\to M_d(\mathbb{Q}_{p})$
and $I_{e(A)}$ is the unit matrix of $e(A)\times e(A)$.\\ 


\noindent
\underline{(I) The case $e(A)\not =0$.}

In this case, we see that $U=(\e^{\oplus d}-I_d)U_0$ 
for some $U_0\in M_{d,e(A)}(\mathbb{Q}_p)$ since 
$\rho_{A,p}^{\mathrm{ur}}$ is abelian. 
Then, on $\mathrm{Gal}(K^{\mathrm{ab}}/K^{\mathrm{ur}})$,
\[ 
\left(\begin{matrix}
  \e^{\oplus d} & U \\  0 & I_{e(A)}
  \end{matrix}\right) 
\left(\begin{matrix}
  U_01_{e(A)} \\ 1_{e(A)}
\end{matrix}\right)
=
\left(\begin{matrix}
  U_01_{e(A)} \\ 1_{e(A)}
\end{matrix}\right),
\]
where $1_{e(A)}={^t}(1,1,\cdots ,1)\in M_{e(A),1}(\mathbb{Q}_p)$.
This equation shows the fact that 
$V_p(A(K^{\mathrm{ab}}))
^{\mathrm{Gal}(K^{\mathrm{ab}}/K^{\mathrm{ur}})}\not =0$, that is, 
$A(K^{\mathrm{ur}})[p^{\infty}]$ is infinite.\\


\noindent
\underline{(II) The case $e(A)=0$.}

In this case, we have $\rho_{A,p}^{\mathrm{ur}}=\e^{\oplus d}$ 
and hence $V_p(A(K^{\mathrm{ab}}))
^{\mathrm{Gal}(K^{\mathrm{ab}}/K^{\mathrm{ur}})}=0$, that is, 
$A(K^{\mathrm{ur}})[p^{\infty}]$ is finite.\\


Consequently we obtained the following lemma:

\begin{lemma}
\label{Lem:3-13}
Let the notation be as above.
Then $e(A)=0$ if and only if $A(K^{\mathrm{ur}})[p^{\infty}]$ is finite. 
\end{lemma}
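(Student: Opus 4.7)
The plan is to read off the lemma directly from the explicit matrix shape of $\rho_{A,p}^{\mathrm{ur}}$ that has just been established, together with the translation between finiteness of $p$-power torsion and vanishing of Galois invariants on the Tate module. Concretely, I would first note that by Prop.~2.1 applied to $L = K^{\mathrm{ur}}$, the group $A(K^{\mathrm{ur}})[p^{\infty}]$ is finite if and only if $V_p(A)^{G_{K^{\mathrm{ur}}}} = 0$. Since $K^{\mathrm{ur}} \subset K^{\mathrm{ab}}$, we have $G_{K^{\mathrm{ab}}} \subset G_{K^{\mathrm{ur}}}$, so every such invariant lies in $V_p(A(K^{\mathrm{ab}}))$, and the identification
\[
V_p(A)^{G_{K^{\mathrm{ur}}}} = V_p(A(K^{\mathrm{ab}}))^{\mathrm{Gal}(K^{\mathrm{ab}}/K^{\mathrm{ur}})}
\]
reduces the question to understanding the invariants of $\rho_{A,p}^{\mathrm{ur}}$.

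For the direction $e(A)=0 \Rightarrow A(K^{\mathrm{ur}})[p^{\infty}]$ finite, I would argue as in case (II) of the discussion above: the shape forces $\rho_{A,p}^{\mathrm{ur}} = \e^{\oplus d}$ on a $d$-dimensional space, and because $K(\mu_{p^{\infty}})/K$ is (totally) ramified, the $p$-adic cyclotomic character $\e$ is nontrivial on $\mathrm{Gal}(K^{\mathrm{ab}}/K^{\mathrm{ur}})$. Hence $\e - 1$ is injective on $V_p(A(K^{\mathrm{ab}}))$, and there are no nonzero invariants.

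For the converse, i.e.\ $e(A) > 0 \Rightarrow A(K^{\mathrm{ur}})[p^{\infty}]$ infinite, I would use case (I): since $\mathrm{Gal}(K^{\mathrm{ab}}/K^{\mathrm{ur}})$ is abelian, the cocycle relation for the upper-right block $U$ collapses to a coboundary condition, yielding $U = (\e^{\oplus d} - I_d) U_0$ for some $U_0 \in M_{d,e(A)}(\mathbb{Q}_p)$. A direct computation then shows that the vector $\left(\begin{smallmatrix} U_0 1_{e(A)} \\ 1_{e(A)} \end{smallmatrix}\right)$ is nonzero and fixed by $\rho_{A,p}^{\mathrm{ur}}$, producing the required nontrivial element of $V_p(A(K^{\mathrm{ab}}))^{\mathrm{Gal}(K^{\mathrm{ab}}/K^{\mathrm{ur}})}$.

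The main obstacle is the coboundary step in case (I): showing that abelianness of the target group forces the off-diagonal block to be of the form $(\e^{\oplus d} - I_d) U_0$. This is the matrix analogue of the standard fact that an extension of the trivial character by a character whose image lies in a group where $\e-1$ is invertible on the relevant subspace must split, and it is the only nontrivial verification needed since both cases of the dichotomy have already been laid out in the preceding discussion.
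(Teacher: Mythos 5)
Your proof is correct and takes essentially the same route as the paper: the lemma there is simply the summary of cases (I) and (II) laid out immediately before it, with the coboundary decomposition of the off-diagonal block forced by abelianness of $\mathrm{Gal}(K^{\mathrm{ab}}/K^{\mathrm{ur}})$ in case (I) and the absence of invariants for $\e^{\oplus d}$ (since $\e$ is nontrivial on $\mathrm{Gal}(K^{\mathrm{ab}}/K^{\mathrm{ur}})$, the cyclotomic extension being ramified) in case (II). The only remark worth making is a harmless sign shared by you and the paper: for the displayed vector to be fixed one needs $U=(I_d-\e^{\oplus d})U_0$, i.e.\ replace $U_0$ by $-U_0$.
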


Now we can finish the proof of Thm.\ \ref{Thm:3-12}.

\begin{proof}[Proof of Thm.\ \ref{Thm:3-12}]
Since $E$ is an elliptic curve, 
by combining Prop.\ \ref{Prop:3-30} with Lem.\ \ref{Lem:3-13},
we can show the 
desired statement by the following way:
$e(E)\not =0
\Leftrightarrow
e(E)=1
\Leftrightarrow
\mathrm{dim}_{\mathbb{Q}_p}V_p(E(K^{\mathrm{ab}}))=2
\Leftrightarrow
V_p(E(K^{\mathrm{ab}}))=V_p(E)
\Leftrightarrow
G_{K^{\mathrm{ab}}}\subset G_{K_{E,p}}
\Leftrightarrow
\rho_{E,p}$ is abelian. 
\end{proof}

\subsection{Global cases}
Consider ``global cases'' of Thm.\ \ref{Thm:4-1}.
Let $A$ be an abelian variety over a number field $K$. 
It is well-known that
the group $A(L)$ is a finitely generated commutative group
for a finite extension field $L$ of $K$ by 
the theorem of Mordell-Weil-N\'eron-Lang.
In particular its torsion subgroup is finite. 
In the case where $L$ is any algebraic extension of $K$,
there are many results on
the finiteness of torsion points 
of $A(L)$.

Let $v$ be a finite place of $K$.
For any finite extension $K'$ of $K$ and 
any finite place $v'$ of $K'$ above $v$,
we denote the completion of $K'$ at $v'$ by $K'_{v'}$.
More generally, for any algebraic extension $L$ and 
any place $w$ above $v$, we denote
\[
 L_w:=\underset{K'}{\cup} K'_{v'},
\]
where $K'$ runs through all the finite extensions of $K$ in $L$
and $v'$ is the unique place of $K'$ under $w$.
Note that the residue field $k_{L_w}$ of $L_w$ is 
$\underset{K'}{\cup}k_{K_{v'}'}$.

As corollaries of Thm.\ \ref{Thm:4-1},
we can see the ``global cases'' below immediately.

\begin{corollary}
\label{Cor:4-3}Let $K,L,A$ be as above.
Assume that there exist places $v$ of $K$ above $p$ and 
$w_{\infty}$ of $L(\mu_{p^{\infty}})$ above $v$ satisfying 
the following properties:

\noindent
$(\mathrm{i})$ The residue field $k_{w_{\infty}}$ of $L(\mu_{p^{\infty}})$ 
at $w_{\infty}$ is 
a potential prime-to-$p$ extension of the 
residue field $k_v$ of $K$ at $v$.

\noindent
$(\mathrm{ii})$ $A$ has  
potential ordinary good reduction at $v$.

Then $A(L)[p^{\infty}]$ is finite.
\end{corollary}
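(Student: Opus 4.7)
The strategy is to reduce this global statement to the local Theorem~\ref{Thm:4-1} by completing at the distinguished place. Set $M := L(\mu_{p^{\infty}})$ and let $M_{w_{\infty}}$ denote the completion of $M$ at $w_{\infty}$. Since $w_{\infty}$ lies above $v$, we have $K_v \subset M_{w_{\infty}}$, so we may view $A$ as an abelian variety over $K_v$ via base change. The natural inclusion $M \hookrightarrow M_{w_{\infty}}$ induces an inclusion on rational points, giving the chain
\[
A(L)[p^{\infty}] \subset A(M)[p^{\infty}] \subset A(M_{w_{\infty}})[p^{\infty}].
\]
Hence it suffices to prove finiteness of the rightmost group.

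I would then apply Thm.\ \ref{Thm:4-1} (1) to the abelian variety $A_{K_v} := A \times_K K_v$ over the $p$-adic field $K_v$, with $L$ in the theorem replaced by $M_{w_{\infty}}$. Hypothesis (ii) of the corollary immediately provides the potential ordinary good reduction of $A_{K_v}$. For the residue field condition, the key observation is that $\mu_{p^{\infty}} \subset M \subset M_{w_{\infty}}$, so $M_{w_{\infty}}(\mu_{p^{\infty}}) = M_{w_{\infty}}$ itself. Since passing from $M$ to its completion at $w_{\infty}$ does not change the residue field, the residue field of $M_{w_{\infty}}(\mu_{p^{\infty}})$ is simply $k_{w_{\infty}}$, which by hypothesis (i) is a potential prime-to-$p$ extension of $k_v$.

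With both hypotheses of Thm.\ \ref{Thm:4-1} verified, the theorem yields finiteness of $A_{K_v}(M_{w_{\infty}})[p^{\infty}]$, and hence of $A(L)[p^{\infty}]$ by the inclusion above. There is no substantive obstacle here: the argument is a clean reduction to the local case, whose only mildly fiddly points are the identification of rational-point groups under the base change $A \rightsquigarrow A_{K_v}$ and the observation that completing a field containing $\mu_{p^{\infty}}$ still contains $\mu_{p^{\infty}}$, so that the residue-field hypothesis of Thm.\ \ref{Thm:4-1} becomes exactly hypothesis (i). By choosing to pass to $M$ before completing (rather than first completing $L$ and then adjoining $\mu_{p^{\infty}}$), we sidestep any comparison between the residue field of $L_w(\mu_{p^{\infty}})$ and $k_{w_{\infty}}$.
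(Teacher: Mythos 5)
Your argument is correct and is exactly the reduction the paper has in mind (the paper offers no written proof, asserting only that the corollary follows "immediately" from Thm.~\ref{Thm:4-1}): embed $A(L)[p^{\infty}]$ into $A(M_{w_{\infty}})[p^{\infty}]$ and apply the local theorem over $K_v$, noting that $M_{w_{\infty}}$ already contains $\mu_{p^{\infty}}$ so its residue field is $k_{w_{\infty}}$. One small caution: take $M_{w_{\infty}}$ in the paper's sense, i.e.\ the union $\underset{K'}{\cup}K'_{v'}$ over finite subextensions, rather than the topological completion, so that it is an \emph{algebraic} extension of $K_v$ as required by the hypotheses of Thm.~\ref{Thm:4-1}.
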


\begin{corollary}
\label{Cor:4-4}Let $K,L,A$ be as above.
Assume that $L(\mu_{p^{\infty}})$ is a Galois extension of $K$, and 
there exist places $v$ of $K$ above $p$ and 
$w_{\infty}$ of $L(\mu_{p^{\infty}})$ above $v$ satisfying 
the following properties:

\noindent
$(\mathrm{i})$ The residue field $k_{w_{\infty}}$ of $L(\mu_{p^{\infty}})$ 
at $w_{\infty}$ is 
finite.

\noindent
$(\mathrm{ii})$ $A$ has  
potential ordinary good reduction at $v$.

Then the torsion part of $A(L)$ is finite.
\end{corollary}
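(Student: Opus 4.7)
The plan is to deduce the corollary from Theorem~\ref{Thm:4-1}(2) by passing to the completion at $v$. Let $w$ denote the restriction of $w_\infty$ to $L$, so $w$ is a place of $L$ above $v$. Since the inclusion $L\hookrightarrow L_w$ induces $A(L)\subset A(L_w)$, it suffices to prove that the torsion part of $A(L_w)$ is finite, which I intend to do by applying Theorem~\ref{Thm:4-1}(2) to the base change $A_{K_v}$ over the $p$-adic field $K_v$ and to the algebraic extension $L_w/K_v$.

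Three hypotheses of Theorem~\ref{Thm:4-1}(2) must be checked. First, $A_{K_v}$ has potential ordinary good reduction by (ii). Second, I claim $L_w(\mu_{p^{\infty}})=L(\mu_{p^{\infty}})_{w_\infty}$: any finite subextension $K'/K$ contained in $L(\mu_{p^{\infty}})$ lies inside $L'(\mu_{p^n})$ for some finite $L'/K$ inside $L$ and some $n\ge 0$, and the completion at the chosen places gives $K'_{v'}\subset L'_{v'}(\mu_{p^n})\subset L_w(\mu_{p^{\infty}})$; the reverse inclusion is immediate from the definitions of $L_w$ and $L(\mu_{p^{\infty}})_{w_\infty}$. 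Third, since $L(\mu_{p^{\infty}})/K$ is Galois, every finite subextension has its Galois closure still inside $L(\mu_{p^{\infty}})$, so $L(\mu_{p^{\infty}})_{w_\infty}$ is an increasing union of finite Galois extensions of $K_v$ and is therefore Galois over $K_v$; together with the identification above, $L_w(\mu_{p^{\infty}})/K_v$ is Galois, and its residue field equals $k_{w_\infty}$, which is finite by (i).

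With these verifications in place, Theorem~\ref{Thm:4-1}(2) yields the finiteness of the torsion part of $A(L_w)$, hence of $A(L)$. No deep obstacle is expected; the only point that genuinely requires care is the identification $L_w(\mu_{p^{\infty}})=L(\mu_{p^{\infty}})_{w_\infty}$, since it is precisely this equality that transports the global hypotheses (the Galois property of $L(\mu_{p^{\infty}})/K$ and the finiteness of $k_{w_\infty}$) to the local extension $L_w(\mu_{p^{\infty}})/K_v$ to which Theorem~\ref{Thm:4-1}(2) is applied.
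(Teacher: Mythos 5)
Your proof is correct and is exactly the argument the paper intends: the paper offers no written proof, asserting only that the corollary follows ``immediately'' from Theorem~\ref{Thm:4-1}, and your localization at $w_\infty$ (together with the identification $L_w(\mu_{p^{\infty}})=L(\mu_{p^{\infty}})_{w_\infty}$ and the transfer of the Galois and finite-residue-field hypotheses to $L_w(\mu_{p^{\infty}})/K_v$) is the standard way to make that deduction precise.
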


If we always assume that 
$L$ contains all $p$-power roots of unity,
these corollaries are generalizations
of a result of Greenberg. 
See \cite{Gr03},\ Prop.\ 1.2 (ii).

\section{Finiteness of torsion points for elliptic curves}
We use the same notations as defined at the beginning of the previous
section
(soon we will suppose $K$ to be a finite extension of $\mathbb{Q}_p$).
In particular $A$ is an abelian variety over a field $K$. 
In this section, 
we give results on the Question which is proposed in the Introduction.
At the beginning,
we shall remark the following  proposition related with 
``a torsion problem of two abelian varieties''.
\begin{proposition}
\label{Pro:5-1} Let $A$ and $B$ be abelian varieties over a field $K$.
Assume that $K$ has the following property:
the torsion part of $A(K')$ is finite for any finite Galois 
extension $K'$ of $K$.
Then, for two different prime numbers $\ell_1$ and $\ell_2$,
the group $A(K_{B,\ell_2})[\ell_1^{\infty}]$ is finite. 
\end{proposition}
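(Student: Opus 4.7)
The plan is to reduce the finiteness of $A(K_{B,\ell_2})[\ell_1^{\infty}]$ to the hypothesis on $K$ by exhibiting a single finite Galois extension $F/K$ that already contains all the points in question. The natural candidate is
\[
F := K_{A,\ell_1}\cap K_{B,\ell_2}.
\]
Both $K_{A,\ell_1}$ and $K_{B,\ell_2}$ are Galois over $K$, so $F$ is too. Any $\ell_1$-power torsion point of $A$ is by definition rational over $K_{A,\ell_1}$, so if it is additionally defined over $K_{B,\ell_2}$, it is rational over the intersection; hence $A(K_{B,\ell_2})[\ell_1^{\infty}]\subset A(F)_{\mathrm{tors}}$. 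Once $F/K$ is shown to be finite, the hypothesis yields $|A(F)_{\mathrm{tors}}|<\infty$ and the proposition follows.

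The crux is therefore to prove that $G:=\mathrm{Gal}(F/K)$ is finite. Restriction produces two continuous surjections
\[
\mathrm{Gal}(K_{A,\ell_1}/K)\twoheadrightarrow G\twoheadleftarrow \mathrm{Gal}(K_{B,\ell_2}/K).
\]
The faithful actions on the Tate modules $T_{\ell_1}(A)$ and $T_{\ell_2}(B)$ embed these two Galois groups into $GL_h(\mathbb{Z}_{\ell_1})$ and $GL_{h'}(\mathbb{Z}_{\ell_2})$, respectively. Since the kernel of reduction modulo $\ell$ in $GL_n(\mathbb{Z}_{\ell})$ is an open pro-$\ell$ subgroup, each of these Galois groups contains an open subgroup which is pro-$\ell_1$ (respectively pro-$\ell_2$). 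Let $P_1,P_2\subset G$ denote the images of these subgroups under the two restriction maps. Each $P_i$ is open in $G$, being the image of an open subgroup under a continuous surjection of profinite groups, and is pro-$\ell_i$, being the continuous image of a pro-$\ell_i$ group.

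The intersection $P_1\cap P_2$ is then open in $G$ and is simultaneously pro-$\ell_1$ and pro-$\ell_2$, so it is trivial; but a profinite group in which $\{1\}$ is open must be finite, so $G$ is finite, as required. The only nontrivial input is the standard fact that $GL_n(\mathbb{Z}_{\ell})$ has an open pro-$\ell$ subgroup together with the observation that no nontrivial profinite group can be pro-$\ell_1$ and pro-$\ell_2$ at once; after that the argument is formal, so I do not anticipate any real obstacle.
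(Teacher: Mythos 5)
Your proof is correct and is essentially the paper's argument with Proposition \ref{Pro:4-0} unwound: the paper also reduces to the finite Galois extension $K_{B,\ell_2}\cap K_{A,\ell_1}$ (your $F$), proving its finiteness over $K$ via the same two facts that $\mathrm{Gal}(K_{A,\ell_1}/K)$ and $\mathrm{Gal}(K_{B,\ell_2}/K)$ have open pro-$\ell_1$ and pro-$\ell_2$ subgroups and that no nontrivial profinite group is both. The only cosmetic difference is that the paper packages this through the notion of a potential prime-to-$p$ extension rather than arguing directly on the quotient $\mathrm{Gal}(F/K)$.
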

\begin{proof}
This follows immediately from Prop.\ \ref{Pro:4-0}.
\end{proof}

In view of the above proposition,
we will be interested in the finiteness of 
$A(K_{B,\ell_2})[\ell_1^{\infty}]$ with $\ell_1=\ell_2$.

From now on, throughout this Section, we always denote by 
$K$ a finite extension of $\mathbb{Q}_p$.
Some of results on such the (in-)finiteness properties
can be checked immediately, 
by using the results given in the previous section,
as follows.

\begin{proposition}
\label{Cor:4-1} 
Let $E$ be elliptic curves over $K$ which has 
potential multiplicative reduction or 
potential supersingular good reduction.
We assume that $A$ has potential ordinary good reduction over $K$.
Then the torsion part of 
$A(K_{E,p})$ is finite. 
\end{proposition}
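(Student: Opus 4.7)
The plan is to apply Theorem~\ref{Thm:4-1}(2) with $L := K_{E,p}$. Since $E$ is defined over $K$, the field $L$ is Galois over $K$; by the Weil pairing $\mu_{p^{\infty}} \subset L$, so $L(\mu_{p^{\infty}}) = L$. Given the assumption that $A$ has potential ordinary good reduction, the only remaining thing to verify is that $L = K_{E,p}$ has a finite residue field.

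To establish this, I would pass to a finite Galois extension $K'/K$ over which $E$ acquires either split multiplicative reduction or supersingular good reduction; since $K_{E,p} \subset K'_{E,p} = K'(E[p^{\infty}])$, it suffices to prove that the residue field of $K'_{E,p}$ is finite. In the multiplicative case, Tate's uniformization gives $E(\bar K') \cong \bar K'^{\times}/q^{\mathbb{Z}}$ for some $q \in K'^{\times}$ with $v(q) > 0$, and an explicit computation of the $p^n$-torsion shows $K'_{E,p} = K'(\mu_{p^{\infty}},\, q^{1/p^{\infty}})$. The cyclotomic extension $K'(\mu_{p^{\infty}})/K'$ is totally ramified (so its residue field is the finite residue field of $K'$), and adjoining the $p^n$-th roots of $q$ is again totally ramified because $v(q) > 0$; hence $K'_{E,p}$ has finite residue field.

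In the supersingular case, $E[p^{\infty}]$ is a connected $p$-divisible group of height $2$ and dimension $1$ over $\mathcal{O}_{K'}$, so $T_p(E) = T_p(\hat E)$ and all its $p$-power torsion points lie in the maximal ideal of $\mathcal{O}_{\bar K'}$. A classical fact (see \cite{Se89}) asserts that the image of the inertia subgroup $I_{K'}$ in $GL(T_p(E)) \cong GL_2(\mathbb{Z}_p)$ has finite index in the image of $G_{K'}$; equivalently, $K'_{E,p}/K'$ has finite residue-field extension. Putting the two cases together, $K_{E,p}$ has finite residue field and Theorem~\ref{Thm:4-1}(2) yields the conclusion. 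The main subtle point is the supersingular case, which relies on the nontrivial local description of Galois images for supersingular elliptic curves; the multiplicative case is entirely explicit via Tate's uniformization.
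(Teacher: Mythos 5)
Your overall skeleton is exactly the paper's: take $L=K_{E,p}$, observe via the Weil pairing that $L\supset K(\mu_{p^{\infty}})$ and that $L/K$ is Galois, check that the residue field of $L$ is finite, and invoke Thm.\ \ref{Thm:4-1}\,(2). The paper disposes of the residue-field claim in one stroke for both reduction types by citing Serre (\cite{Se72}): $\rho_{E,p}(G_K)$ and $\rho_{E,p}(I_K)$ have the same Lie algebra (this is recorded in Prop.\ \ref{Pro:5-4}\,(3)--(5)), so the inertia image is open in the full image and the unramified part of $K_{E,p}/K$ is finite. Your supersingular case is this same citation, so there is nothing to add there.

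The multiplicative case, as you wrote it, has a genuine gap in the justification (though the conclusion you need is true). First, $K'(\mu_{p^{\infty}})/K'$ need \emph{not} be totally ramified: e.g.\ for $p=5$ and $K'=\mathbb{Q}_5(\sqrt{5u})$ with $u$ a non-square unit, $K'(\mu_5)$ contains the unramified quadratic extension $K'(\sqrt{u})$. Only the weaker (and sufficient) statement that its residue extension is finite holds. Second, and more seriously, the claim that adjoining $q^{1/p^n}$ is ``totally ramified because $v(q)>0$'' is not a proof: the value group of $K'(\mu_{p^{\infty}})$ is already $p$-divisible, so $v(q^{1/p^n})$ lies in it for every $n$ and no ramification is forced by valuations alone; moreover $v_{K'(\mu_{p^n})}(q)$ is divisible by a high power of $p$, so the ``uniformizer part'' of $q$ is nearly a $p^n$-th power and an unramified Kummer subextension is not obviously excluded. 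A correct replacement: $\mathrm{Gal}\bigl(K'(\mu_{p^{\infty}},q^{1/p^{\infty}})/K'(\mu_{p^{\infty}})\bigr)$ is isomorphic, as a $\mathrm{Gal}(K'(\mu_{p^{\infty}})/K')$-module, to a quotient of $\mathbb{Z}_p(1)$; any unramified quotient carries the trivial action (unramified extensions descend to $K'$), and $\mathbb{Z}_p(1)$ has no infinite quotient with trivial action since $\e$ has infinite order on $\mathrm{Gal}(K'(\mu_{p^{\infty}})/K')$. Hence the unramified part is finite and the residue field of $K_{E,p}$ is finite. Alternatively, simply cite case (5) of Prop.\ \ref{Pro:5-4} (i.e.\ \cite{Se72}) for the multiplicative case as well, which is what the paper does.
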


\begin{proof}
The two Lie groups $\rho_{E,p}(G_K)$ and $\rho_{E,p}(I_K)$
have the same Lie algebras (cf.\ \cite{Se72}) and hence
the residue field of $K_{E,p}$ 
is a finite field. Therefore, 
the residue field of $K _{E,p}$
is also finite.
Consequently we finish the proof by Thm.\ \ref{Thm:4-1} (2).  
\end{proof}

\begin{proposition}
\label{Pro:4-20}
Let $A$ and $B$ be abelian varieties over $K$ 
which have ordinary good reductions.
Assume that $A(\bar K)[p]$ is rational over $K$.
Then $A(K_{B,p})$ is infinite.
\end{proposition}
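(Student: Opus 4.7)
My plan is to apply Cor.\ \ref{Cor:3-11} with $L = K_{B,p}$. Once we know that the residue field $k_{K_{B,p}}$ is not a potential prime-to-$p$ extension of $k$, the corollary immediately gives that $A(K_{B,p})[p^{\infty}]$ is infinite and \emph{a fortiori} $A(K_{B,p})$ is infinite.

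First I would verify the hypotheses of Cor.\ \ref{Cor:3-11}: the extension $K_{B,p}/K$ is Galois as usual; $K(A[p]) = K \subseteq K_{B,p}$ since $A[p]$ is $K$-rational by assumption; and $K(\mu_{p^{\infty}}) \subseteq K_{B,p}$ because $B$ has ordinary good reduction. This last inclusion follows from the proof of Prop.\ \ref{Pro:4-2}\,(2): the formal component $\mathcal{B}(p)^0$ of the N\'eron model's $p$-divisible group is a form of $\mu_{p^{\infty}}^{\oplus d'}$ (where $d' = \dim B \ge 1$) which becomes trivial after base change to $\cO_{\widehat{K^{\mathrm{ur}}}}$, so the field generated by its $p^{\infty}$-torsion equals $K(\mu_{p^{\infty}})$ up to an unramified twist and in particular contains $K(\mu_{p^{\infty}})$.

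The heart of the argument is to show that $k_{K_{B,p}}$ is not a potential prime-to-$p$ extension of $k$. Via the connected-\'etale sequence of $\mathcal{B}(p)$, reduction modulo the maximal ideal identifies $B[p^{\infty}]/\mathcal{B}(p)^0[p^{\infty}]$ with $\tilde B[p^{\infty}]$, so $K_{B,p}$ contains the unramified extension $K(\tilde B[p^{\infty}])$ whose residue field is $k(\tilde B[p^{\infty}])$. Since $\tilde B$ is ordinary over $k = \mathbb{F}_q$ with $d' \ge 1$, Frobenius acts on $T_p(\tilde B) \simeq \mathbb{Z}_p^{d'}$ with eigenvalues that are $p$-adic unit $q$-Weil numbers; each has complex absolute value $\sqrt{q} > 1$ and is therefore not a root of unity, so $T_B(\mathrm{Frob}_k)$ has infinite order in $GL_{d'}(\mathbb{Z}_p)$. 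Passing to a finite extension $k'/k$ with $T_B(\mathrm{Frob}_{k'}) \in 1 + pM_{d'}(\mathbb{Z}_p)$, the image of $T_B|_{G_{k'}}$ lies in this pro-$p$ group and remains infinite. Hence $k(\tilde B[p^{\infty}])$ contains an infinite pro-$p$ subextension of some finite extension of $k$, ruling out being a potential prime-to-$p$ extension of $k$. Cor.\ \ref{Cor:3-11} then completes the proof.

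The main technical point, I expect, will be the verification that $K(\mu_{p^{\infty}}) \subseteq K_{B,p}$ when $B$ is not principally polarized: the formal-group argument above handles this, but one could alternatively use that the polarization isogeny $B \sim B^{\vee}$ induces an isomorphism $T_B \cong T_{B^{\vee}}$ of $\mathbb{Q}_p$-representations of $G_K$; combined with Prop.\ \ref{Pro:4-2}\,(3), a trace comparison then forces $\varepsilon(g) = 1$ for every $g \in \ker \rho_{B,p}$.
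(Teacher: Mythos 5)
Your proof is correct and follows the same overall architecture as the paper's: both apply Cor.~\ref{Cor:3-11} with $L=K_{B,p}$ and reduce everything to showing that the residue field of $K_{B,p}$ is not a potential prime-to-$p$ extension of $k$. The difference is in how that key fact is established. The paper does it in one line: $B(K_{B,p})[p^{\infty}]$ is tautologically infinite, so the contrapositive of Thm.~\ref{Thm:4-1}\,(1), applied to $B$ itself (noting $K_{B,p}=K_{B,p}(\mu_{p^{\infty}})$), forces the residue field to fail the potential prime-to-$p$ condition. You instead compute directly: the Frobenius eigenvalues on $T_p(\tilde B)$ are $p$-adic unit $q$-Weil numbers, hence not roots of unity, so $k(\tilde B[p^{\infty}])\subseteq k_{K_{B,p}}$ contains an infinite pro-$p$ extension of a finite extension of $k$. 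This is a valid, self-contained alternative; it costs a bit more work but makes explicit \emph{why} the residue field is large, whereas the paper's route simply recycles the already-proved theorem. One small imprecision: your formal-group justification of $K(\mu_{p^{\infty}})\subseteq K_{B,p}$ (``equals $K(\mu_{p^{\infty}})$ up to an unramified twist and in particular contains it'') does not follow as stated, since the fixed field of $\ker(\e\chi^{-1})$ for a nontrivial unramified $\chi$ need not contain the fixed field of $\ker\e$; but the inclusion itself holds for any abelian variety via the Weil pairing and a polarization, as the paper records at the start of Section 2 and as your alternative trace/polarization argument correctly shows, so nothing is broken.
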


\begin{proof}
Since $B(K_{B,p})[p^{\infty}]$ is infinite, 
the residue field $k_{B,p}$ of $K_{B,p}$ is not  
a potential prime-to-$p$ extension of $k$ by Thm.\ \ref{Thm:4-1}. 
Because $A(\bar K)[p]$ is rational over $K$,
Cor.\ \ref{Cor:3-11} shows that $A(K_{B,p})[p^{\infty}]$ is infinite.
\end{proof}

In the rest of this section, 
we discuss the following question:

\begin{question}
Let $A$ and $B$ be abelian varieties over $K$.
When is $A(K_{B,p})[p^{\infty}]$ finite?
\end{question}

Now we are interested in the case where 
$A$ (and $B$) are elliptic curves.
Let us consider the above Question by 
distinguishing the reduction type of $A$.

\subsection{Ordinary good reduction case}
Let $A=E_1$ and $B=E_2$ be two elliptic curves over $K$.
We have already proved in Cor.\ \ref{Cor:4-1} that 
the torsion part of $E_1(K_{E_2,p})[p^{\infty}]$ is finite
if $E_1$ has ordinary good reduction over $K$ and 
$E_2$ has supersingular good reduction or multiplicative reduction over $K$.
We consider the infiniteness 
of $E_1(K_{E_2,p})[p^{\infty}]$ under the condition that 
$E_1$ and $E_2$ have ordinary good reduction over $K$. 
One of the result for the 
finiteness of $E_1(K_{E_2,p})[p^{\infty}]$ has 
given in Prop.\ \ref{Pro:4-20}. However, 
in this $1$-dimensional case,
we will show more precise criterion in 
Thm.\ \ref{Thm:5-1} and Cor.\ \ref{Cor:5-10}.

We shall give some notation that we need.
In the rest of this subsection 
we always assume that 
$E_1$ and $E_2$ have ordinary good reduction over $K$.
Let $\tilde E_1$ and $\tilde E_2$ be the reduction of 
$E_1$ and $E_2$ over $k$, respectively.
For each elliptic curves $E_i$, Put
\[
\chi_i=\rho_{\tilde E_i,p}:G_K\to GL(T_p(\tilde E_i))\simeq 
GL_{1}(\mathbb{Z}_p)=\mathbb{Z}_p^{\times},
\]
and 
\[
\bar \chi_i=\bar \rho_{\tilde E_i,p}:G_K\to 
GL(\tilde E_i(k^{\mathrm{sep}})[p])\simeq 
GL_{1}(\mathbb{F}_p)=\mathbb{F}_p^{\times}. 
\]
Clearly $\chi_i$ and $\bar \chi_i$ are unramified characters by
their definitions. 
It can be checked that each $\chi_i$ is of infinite order.
We know that  
each ${\rho}_{E_i,p}$ has the form
\[
\left(\begin{matrix}
  \e\chi_i^{-1} & u_i \\  0 & \chi_i
  \end{matrix}\right)
\]
with respect to a suitable basis of $T_p(E_i)$.
We fix such a basis and identify 
$T_p(E_i)$ with $\mathbb{Z}_p^{\oplus 2}$. 

\begin{proposition}
\label{Thm:5-1}
Let the notations be as above and $p\ge 3$. 
Consider the following four conditions $:$

\noindent
$(a)$ The group $E_1(K_{E_2,p})[p^{\infty}]$ is infinite.

\noindent
$(b)$  $G_{K_{E_2,p}}\subset \mathrm{ker}({\chi_1})$. 

\noindent
$(c)$ $\mathrm{ker}({\chi_2})\subset \mathrm{ker}({\chi_1})$.

\noindent
$(d)$ $\mathrm{Im}(\bar \chi_1)\subset \mathrm{Im}(\bar \chi_2)$.

Then there is the following relation:
$(a)\Leftrightarrow
(b)\Leftarrow
(c)\Leftrightarrow
(d)$.
\end{proposition}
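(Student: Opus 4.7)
The plan is to split the four-way statement into three implications, $(a)\Leftrightarrow(b)$, $(c)\Rightarrow(b)$, and $(c)\Leftrightarrow(d)$, and handle them in that order.

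For $(a)\Leftrightarrow(b)$, I restrict $\rho_{E_1,p}$ to $G_{K_{E_2,p}}$. Since $K_{E_2,p}\supset K(\mu_{p^\infty})$, the cyclotomic character $\varepsilon$ is trivial on $G_{K_{E_2,p}}$, so on this subgroup the matrix reduces to $\begin{pmatrix}\chi_1^{-1}&u_1\\0&\chi_1\end{pmatrix}$. By the Tate-module criterion from Section 2.1, (a) is equivalent to the existence of a nonzero fixed vector $(x,y)^t\in V_p(E_1)$ for this matrix. If $\chi_1|_{G_{K_{E_2,p}}}$ is nontrivial, then picking $\sigma$ with $\chi_1(\sigma)\ne 1$ and reading off the two fixed-vector equations forces $y=0$ and then $x=0$; conversely, if $\chi_1|_{G_{K_{E_2,p}}}$ is trivial, the vector $(1,0)^t$ is visibly fixed. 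So $(a)\Leftrightarrow(b)$. The implication $(c)\Rightarrow(b)$ is then immediate: $G_{K_{E_2,p}}=\ker\rho_{E_2,p}$ is contained in the kernel of the diagonal entry $\chi_2$ of $\rho_{E_2,p}$, and $(c)$ propagates this containment into $\ker\chi_1$.

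The main step is $(c)\Leftrightarrow(d)$. Since $\chi_i$ is unramified and of infinite order, it factors as a continuous character $\hat{\mathbb{Z}}=G_K/I_K\to\mathbb{Z}_p^\times$ sending the Frobenius to some $a_i=\omega_i u_i$ in the splitting $\mathbb{Z}_p^\times=\mu_{p-1}\times(1+p\mathbb{Z}_p)$, which is available because $p\ge 3$; the infinite-order hypothesis forces $u_i\ne 1$ for both $i$. Writing $\hat{\mathbb{Z}}=\mathbb{Z}_p\oplus\hat{\mathbb{Z}}^{(p)}$ as the sum of its pro-$p$ and prime-to-$p$ parts, the pro-$p$ component of $\chi_i$ factors through $\mathbb{Z}_p$ where it is injective (as $u_i\ne 1$ and $1+p\mathbb{Z}_p$ is torsion-free), while the Teichm\"uller component factors through $\hat{\mathbb{Z}}^{(p)}$ and coincides with $\bar\chi_i$ under $\mu_{p-1}\simeq\mathbb{F}_p^\times$. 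Combining, $\ker\chi_i=\ker\bar\chi_i\cap\hat{\mathbb{Z}}^{(p)}$ and $\ker\bar\chi_i=\mathbb{Z}_p\oplus\ker\chi_i$, so $\ker\chi_2\subset\ker\chi_1$ if and only if $\ker\bar\chi_2\subset\ker\bar\chi_1$. Finally, the image $\mathrm{Im}(\bar\chi_i)=\langle\omega_i\rangle$ lies in the cyclic group $\mathbb{F}_p^\times$ and the kernel $\ker\bar\chi_i$ is an open subgroup of the pro-cyclic group $\hat{\mathbb{Z}}^{(p)}$; both subgroups are determined by the order $o_i$ of $\omega_i$, and the containments $\mathrm{Im}(\bar\chi_1)\subset\mathrm{Im}(\bar\chi_2)$ and $\ker\bar\chi_2\subset\ker\bar\chi_1$ are each equivalent to $o_1\mid o_2$. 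Chaining these equivalences yields $(c)\Leftrightarrow(d)$.

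The main obstacle is the implication $(d)\Rightarrow(c)$: a priori, knowing only the mod-$p$ images of $\chi_1$ and $\chi_2$ gives no control over their full $p$-adic kernels. The crucial input resolving this is the infinite-order assumption on each $\chi_i$, which pins down the pro-$p$ factor as a character with kernel exactly $\hat{\mathbb{Z}}^{(p)}$, the same for $i=1$ and $i=2$. This common structure of the pro-$p$ parts reduces the $p$-adic comparison of kernels to the Teichm\"uller comparison inside the finite cyclic group $\mu_{p-1}$, where containment of images and reverse containment of kernels both become the tautology of divisibility of orders. The hypothesis $p\ge 3$ is essential here, because it is what makes the clean splitting $\mathbb{Z}_p^\times\cong\mu_{p-1}\times(1+p\mathbb{Z}_p)$ and the torsion-freeness of $1+p\mathbb{Z}_p$ available.
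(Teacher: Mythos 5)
Your proof is correct and follows essentially the same route as the paper: the same matrix computation (with $\varepsilon$ trivialized on $G_{K_{E_2,p}}$) for $(a)\Leftrightarrow(b)$, the same observation $G_{K_{E_2,p}}\subset\ker\chi_2$ for $(c)\Rightarrow(b)$, and the same decomposition $\mathbb{Z}_p^\times\cong\mathbb{Z}/(p-1)\mathbb{Z}\times\mathbb{Z}_p$ together with the infinite-order hypothesis on the $\chi_i$ for $(c)\Leftrightarrow(d)$. The only cosmetic difference is that you phrase the last equivalence via divisibility of the orders of the Teichm\"uller components, whereas the paper computes the kernels explicitly as subgroups of $\prod_\ell\mathbb{Z}_\ell$; the content is identical.
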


\begin{proof}
If we assume that the condition $(a)$ is satisfied,
there is a $1$-dimensional $G_{K_{E_2,p}}$-invariant 
subspace $W$ of $V_p(E_1)=M_{2,1}(\mathbb{Q}_p)$.
Take any non-zero element 
$\left(\begin{smallmatrix}
  x \\ y
\end{smallmatrix}\right)$  
in $W$.
Since $x$ or $y$ is a non-zero element which is  
invariant under the multiplication by $\chi_1(\sigma)$ for all 
$\sigma\in G_{K_{E_2,p}}$,
we see that 
$V_p(\tilde E_1)^{G_{K_{E_2,p}}}$ has 
non-trivial subspace and hence 
$V_p(\tilde E_1)^{G_{K_{E_2,p}}}=V_p(\tilde E_1)$.
This implies the condition $(b)$.
Conversely we assume that the condition $(b)$ is satisfied.
Then the condition $(a)$ follows from the fact that 
\[
\rho_{E_1,p}|_{G_{K_{E_2,p}}}=
\left(\begin{matrix}
  \e\chi_1^{-1} & u_1 \\  0 & \chi_1
  \end{matrix}\right)
=
\left(\begin{matrix}
  1 & u_1 \\  0 & 1
  \end{matrix}\right). 
\]
\noindent
Because $G_{K_{E_2,p}}\subset \mathrm{ker}(\chi_2)$,
the condition $(c)$ implies $(b)$.
Finally let us show that conditions $(c)$ and $(d)$
are equivalent.
Since $\chi_1$ and $\chi_2$ are unramified,
we may consider each character $\chi_i$ as a 
character 
\[
G_k\overset{\chi_i}{\to} \mathbb{Z}_p^{\times}\simeq 
\mathbb{Z}/(p-1)\mathbb{Z}\times \mathbb{Z}_p,
\]
where $G_k$ is an absolute Galois group of $k$.
By 
$\mathrm{pr}_1:
\mathbb{Z}/(p-1)\mathbb{Z}\times \mathbb{Z}_p\to \mathbb{Z}/(p-1)\mathbb{Z}$
and 
$\mathrm{pr}_2:\mathbb{Z}/(p-1)\mathbb{Z}\times \mathbb{Z}_p\to \mathbb{Z}_p$, 
we denote natural projections.
Let $\sigma_k$ be the Frobenius automorphism of $G_k$ and 
we decompose $\chi_i(\sigma_k)=(m_i,n_i)\in 
\mathbb{Z}/(p-1)\mathbb{Z}\times \mathbb{Z}_p$.
Note that 
$\mathrm{Im}(\chi_i)$ has an infinite image and thus $n_i\not =0$,
since $k$ is a finite field. 
If we identify $G_k$ with $\underset{\ell}{\prod}\mathbb{Z}_{\ell}$,
we see that  
\begin{align*}
\mathrm{ker}(\chi_i)&=
\mathrm{ker}(\mathrm{pr}_1\circ \chi_i)\cap 
\mathrm{ker}(\mathrm{pr}_2\circ \chi_i)\\
&=(p-1)/(\mathrm{gcd}(p-1,m_i)){\underset{\ell}{\prod}\mathbb{Z}_{\ell}}
\cap \underset{\ell\not =p}{\prod}\mathbb{Z}_{\ell}\\
&=(p-1)/(\mathrm{gcd}(p-1,m_i))\underset{\ell\not =p}{\prod}\mathbb{Z}_{\ell}.
\end{align*}

Therefore,
it can be checked that 
the condition $(c)$ 
is equivalent to the condition that
$m_1\cdot \mathbb{Z}/(p-1)\mathbb{Z}\subset 
m_2\cdot \mathbb{Z}/(p-1)\mathbb{Z}$, 
which is $(d)$.
\end{proof}

Let $k_{\tilde E_i[p]}$ be the smallest field extension of $k$ over which
the elements of $\tilde E_i(k^{\mathrm{sep}})[p]$ is rational. 
\begin{corollary}
\label{Cor:5-10}
Let $E_1$ and $E_2$ be elliptic curves over $K$ which have 
ordinary good reduction.
Assume that $p\ge 3$.

\noindent
$(1)$ If $k_{\tilde E_1[p]}\subset k_{\tilde E_2[p]}$,
then $E_1(K_{E_2,p})[p^{\infty}]$ is infinite.

\noindent
$(2)$ If the map $\bar \chi_1$ is trivial,
then 
$E_1(K_{E_2,p})[p^{\infty}]$ is infinite. 

\noindent
$(3)$ If the map $\bar \chi_2$ is
surjective, then
$E_1(K_{E_2,p})[p^{\infty}]$ is infinite.
\end{corollary}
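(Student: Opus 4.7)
The plan is to reduce each of the three parts to verifying condition $(d)$ of Prop.\ \ref{Thm:5-1}, i.e.\ $\mathrm{Im}(\bar\chi_1)\subset \mathrm{Im}(\bar\chi_2)$, and then invoke the implication $(d)\Rightarrow (a)$ proved there.

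To set up the reductions uniformly, I would first observe that each $\bar\chi_i$ is unramified and so factors through $G_k$, and that by definition $k_{\tilde E_i[p]}$ is the fixed field of $\mathrm{ker}(\bar\chi_i)$ in $\bar k$. Consequently the induced map
\[
\mathrm{Gal}(k_{\tilde E_i[p]}/k)\hookrightarrow \mathbb{F}_p^{\times}
\]
identifies $\mathrm{Gal}(k_{\tilde E_i[p]}/k)$ with $\mathrm{Im}(\bar\chi_i)$ as a subgroup of $\mathbb{F}_p^{\times}$. This translation between the Galois-theoretic data and the subgroups of $\mathbb{F}_p^{\times}$ is the only conceptual point in the proof.

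For part $(2)$, triviality of $\bar\chi_1$ gives $\mathrm{Im}(\bar\chi_1)=\{1\}$, trivially contained in $\mathrm{Im}(\bar\chi_2)$; for part $(3)$, surjectivity of $\bar\chi_2$ gives $\mathrm{Im}(\bar\chi_2)=\mathbb{F}_p^{\times}$, which contains every subgroup. Both yield $(d)$ at once. For part $(1)$, the assumed inclusion $k_{\tilde E_1[p]}\subset k_{\tilde E_2[p]}$ implies that $|\mathrm{Im}(\bar\chi_1)|$ divides $|\mathrm{Im}(\bar\chi_2)|$.

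The only step requiring a moment's care, and hence the ``main obstacle'' (a very mild one), is in part $(1)$: divisibility of orders does not in general imply inclusion of subgroups, but $\mathbb{F}_p^{\times}$ is cyclic, so its subgroups are totally ordered by inclusion, and divisibility of orders forces $\mathrm{Im}(\bar\chi_1)\subset \mathrm{Im}(\bar\chi_2)$. This gives $(d)$ and completes the proof via Prop.\ \ref{Thm:5-1}.
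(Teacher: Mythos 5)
Your proposal is correct and matches the paper's intent: the paper's proof of this corollary is simply ``All statements will immediately follow from Prop.~\ref{Thm:5-1}'', and your reduction of each part to condition $(d)$ (via the identification $\mathrm{Gal}(k_{\tilde E_i[p]}/k)\simeq\mathrm{Im}(\bar\chi_i)$ and the cyclicity of $\mathbb{F}_p^{\times}$ for part $(1)$) is exactly the verification being left to the reader. No gaps.
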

\begin{proof}
All statements will immediately follow from 
Prop.\ \ref{Thm:5-1}.
\end{proof}

\subsection{Supersingular good reduction case}

Let $A=E_1$ and $B=E_2$ be elliptic curves over $K$.
In this subsection we consider the case where 
$E_1$ has (potential) supersingular good reduction over $K$.
We recall the structure of the Lie algebra 
associated to an elliptic curve.

\begin{proposition} [\cite{Se89}, Appendix of Chap.\ 4]
\label{Pro:5-4}
Let $E$ be an elliptic curve over $K$.
Put $\mathfrak{g}:=\mathrm{Lie}(\rho_{E,p}(G_K))$ and 
$\mathfrak{i}:=\mathrm{Lie}(\rho_{E,p}(I_K))$
$($these are Lie subalgebras of $\mathrm{End}_{\mathbb{Q}_p}(V_p(E))$$)$.

\noindent
$(1)$ If $E$ has ordinary good reduction with 
complex multiplication, then 
$\mathfrak{g}$ is a split Cartan subalgebra of 
$\mathrm{End}_{\mathbb{Q}_p}(V_p(E))$ and 
$\mathfrak{i}$ is a $1$-dimensional subspace of $\mathfrak{g}$.

\noindent
$(2)$ If $E$ has ordinary good reduction 
without complex multiplication,  then 
$\mathfrak{g}$ is the Borel subalgebra of 
$\mathrm{End}_{\mathbb{Q}_p}(V_p(E))$ corresponding to 
the kernel of the natural reduction map $V_p(E)\to V_p(\tilde E)$
and $\mathfrak{i}$ is a $2$-dimensional subspace of 
$\mathfrak{g}$.

\noindent
$(3)$ If $E$ has supersingular good reduction with 
formal complex multiplication, then
$\mathfrak{g}$ is a non-split Cartan subalgebra 
of $\mathrm{End}_{\mathbb{Q}_p}(V_p(E))$
and $\mathfrak{i}=\mathfrak{g}$.

\noindent
$(4)$ If $E$ has supersingular good reduction without 
formal complex multiplication, then
$\mathfrak{g}=\mathrm{End}_{\mathbb{Q}_p}(V_p(E))$
and $\mathfrak{i}=\mathfrak{g}$.

\noindent
$(5)$ If the $j$-invariant of $E$ has negative $p$-adic valuation, then 
$\mathfrak{g}$ coincides with $\mathfrak{n}_X$ for some $1$-dimensional 
subspace $X$ of $V_p(E)$
and $\mathfrak{i}=\mathfrak{g}$. 
Here $\mathfrak{n}_X$ is the subspace of $\mathrm{End}_{\mathbb{Q}_p}(V_p(E))$
generated by all $u\in \mathrm{End}_{\mathbb{Q}_p}(V_p(E))$ satisfying that
$u(V_p(E))\subset X$.

\end{proposition}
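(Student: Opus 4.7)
The plan is to treat the five reduction types in turn; in each case I first identify an explicit Lie subalgebra of $\mathrm{End}_{\mathbb{Q}_p}(V_p(E))$ that must contain $\mathfrak{g}$ as a consequence of the reduction type, then prove the reverse inclusion by a combination of direct matrix computations, Serre's open-image theorems (in the non-CM cases), and Sen's theorem on Hodge--Tate weights of inertial actions (for pinning down $\mathfrak{i}$). The common input for the good-ordinary cases is Prop.\ \ref{Pro:4-2}: it puts $\rho_{E,p}$ in upper-triangular form with diagonal $(\e\chi^{-1},\chi)$, $\chi$ unramified of infinite order, so $\mathfrak{g}$ automatically sits in the corresponding Borel subalgebra $\mathfrak{b}$, and $\mathfrak{i}$ sits in the subalgebra of $\mathfrak{b}$ where the unramified diagonal entry vanishes.

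For case $(1)$ I would use Deuring's criterion: ordinary CM forces $p$ to split in the CM field $F$, so $F\otimes\mathbb{Q}_p\simeq\mathbb{Q}_p\times\mathbb{Q}_p$ acts on $V_p(E)$ by a split Cartan $\mathfrak{c}\subset\mathfrak{b}$ commuting with $\rho_{E,p}$, and the infiniteness of $\chi$ and $\e\chi^{-1}$ as independent characters pins $\mathfrak{g}$ to be $\mathfrak{c}$; since $\chi$ is unramified, $\mathfrak{i}$ is cut down to the single cyclotomic line. For case $(2)$, the absence of CM together with Serre's open-image theorem forces $\rho_{E,p}(G_K)$ to be open in $\mathfrak{b}$, so $\mathfrak{g}=\mathfrak{b}$; the non-triviality of the connected--\'etale extension class restricted to $I_K$ (equivalently, the Hodge--Tate cocharacter being non-trivial in the unipotent direction by Sen) then forces $\mathfrak{i}$ to equal the full two-dimensional subalgebra allowed by the unramifiedness of the quotient.

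For cases $(3)$ and $(4)$ supersingularity rules out any $G_K$-stable line in $V_p(E)$. In case $(3)$, formal CM yields an embedding $F\hookrightarrow\mathrm{End}_{\mathbb{Q}_p}(V_p(E))$ commuting with the image, with $F/\mathbb{Q}_p$ a quadratic extension that is necessarily non-split because the endomorphism algebra of a supersingular formal group of height $2$ is the quaternion division algebra over $\mathbb{Q}_p$; the commutant of $F$ is then the non-split Cartan $C$, and Lubin--Tate theory applied to the formal group of $E$ shows that $\rho_{E,p}(I_K)$ is already open in $C$, giving $\mathfrak{i}=\mathfrak{g}=C$. In case $(4)$ no such commuting field exists and Serre's local open-image theorem forces $\rho_{E,p}(I_K)$ to be open in $GL(V_p(E))$, so $\mathfrak{i}=\mathfrak{g}=\mathrm{End}_{\mathbb{Q}_p}(V_p(E))$. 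For case $(5)$ I would invoke Tate uniformization: after a finite extension (which does not change Lie algebras) one has a non-split exact sequence $0\to\mathbb{Q}_p(1)\to V_p(E)\to\mathbb{Q}_p\to 0$, and setting $X$ to be the image of $\mathbb{Q}_p(1)$ makes $\rho_{E,p}$ land in matrices of the shape $\bigl(\begin{smallmatrix}\e(\sigma)&u(\sigma)\\0&1\end{smallmatrix}\bigr)$; the non-triviality of both $\e$ and $u$ on $I_K$ then forces $\mathfrak{i}=\mathfrak{g}=\mathfrak{n}_X$ by a direct matrix inspection.

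The main obstacle is the use of Serre's open-image theorems in cases $(2)$ and $(4)$ in their \emph{local} form, where what is really required is that the inertial image has finite index in the expected algebraic group; the cleanest route is the Lie-algebra formalism of \cite{Se89} combined with Sen's theorem, which identifies the Lie algebra of $\rho_{E,p}(I_K)$ from the Hodge--Tate cocharacter and a dimension count. The remaining subtleties -- Deuring's criterion in case $(1)$ and the non-splitness of the FCM extension in case $(3)$ -- are classical and reduce to standard checks once the framework is in place.
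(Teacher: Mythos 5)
The paper does not prove this proposition at all: it is quoted directly from Serre's local analysis ([Se89], IV, Appendix A.2), and the only ingredients of that appendix used elsewhere in the paper are the equivalence ``$\rho_{E,p}$ abelian $\Leftrightarrow$ $E$ has CM'' recalled after Thm.\ \ref{Thm:3-12} and the formula $\Psi_i(x)=\mathrm{Nr}_{K/F_i}(x^{-1})$ in Prop.\ \ref{Pro:5-6}. So your sketch is really being measured against Serre's own arguments, and as a reconstruction of them it is broadly sound: cases $(1)$, $(3)$ and $(5)$ are handled exactly as in the source (splitness of $F\otimes\mathbb{Q}_p$ detected by the ordinary/supersingular dichotomy, the Lubin--Tate description of the inertial action through $\mathrm{Nr}_{K/F}$, and Tate uniformization with a Kummer class that stays non-torsion on inertia because $v(q)>0$).

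Two of your named tools, however, do not do the work you assign them. In case $(2)$, ``Serre's open-image theorem'' is a statement over number fields and has no local analogue of that shape; what is actually needed is the local dichotomy of [Se89] A.2.4 (image abelian iff CM), which the paper itself quotes. Once Prop.\ \ref{Pro:4-2} places $\mathfrak{g}$ inside the Borel $\mathfrak{b}$ with full image in the diagonal torus (because $\e|_{I_K}$ has open image while $\chi$ is unramified of infinite order), $\mathfrak{g}$ is either a Cartan --- hence abelian, hence CM, excluded --- or all of $\mathfrak{b}$; and then $\mathfrak{i}$ is an ideal of $\mathfrak{b}$ containing the cyclotomic line and contained in $\mathfrak{n}_X$, which already forces $\mathfrak{i}=\mathfrak{n}_X$ with no appeal to Sen. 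In case $(4)$, invoking ``Serre's local open-image theorem'' is invoking the very statement being proved; the self-contained route is: supersingularity makes $V_p(E)$ irreducible over every finite extension of $K$, so $\mathfrak{i}$ is an (algebraic) Lie subalgebra acting irreducibly; by Tate's full-faithfulness theorem its commutant is $\mathrm{End}_{\cO_{K'}}(\mathscr{E}(p))\otimes\mathbb{Q}_p=\mathbb{Q}_p$ in the non-FCM case, which rules out every irreducible subalgebra contained in a non-split Cartan; and $\mathfrak{sl}_2$ is ruled out because $\det\rho_{E,p}|_{I_K}=\e|_{I_K}$ has infinite image, leaving $\mathfrak{i}=\mathfrak{gl}_2$. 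With those two substitutions your outline matches the proof in [Se89].
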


\noindent
Here,
for any elliptic curve $E$ over $K$ which has supersingular good reduction,
we say that $E$ has 
\textit{formal complex multiplication} over $K$
if an endomorphism ring of the $p$-divisible group $\mathscr{E}(p)$
over $\cO_K$ has rank $2$ 
as a $\mathbb{Z}_p$-module, 
where $\mathscr{E}(p)$ is 
the $p$-divisible group associated with the N\'eron model 
$\mathscr{E}$ of $E$ over $\cO_K$. 
We also say that 
$E$ has \textit{formal complex multiplication} if 
$E\times_K K'$ has formal complex multiplication 
defined over some algebraic extension $K'$ of $K$.
Then the quadratic field 
$\mathrm{End}_{\cO_{K'}}(\mathscr{E}(p))\otimes_{\mathbb{Z}_p} \mathbb{Q}_p$ 
is called the the field of formal complex multiplication.
We can take $K'$ for at most degree 2 extension of $K$.

Our first result in this subsection is :
\begin{proposition}
\label{Pro:5-10}Let $E_1$ and $E_2$ be elliptic curves over $K$.
Suppose that $E_1$ has 
potential supersingular good reduction over $K$.
Then $E_1(K_{E_2,p})[p^{\infty}]$ is finite if one of the 
following conditions is satisfied:

\noindent
$(1)$ The elliptic curve $E_2$ has 
potential ordinary good reduction.

\noindent
$(2)$ The elliptic curve $E_2$ has 
potential supersingular good reduction.
Furthermore, one of the following conditions is satisfied:

$(\mathrm{i})$ $E_1$ has formal complex multiplication but 
$E_2$ does not have. 

$(\mathrm{ii})$ $E_2$ has formal complex multiplication but 
$E_1$ does not have. 

\noindent
$(3)$ The $j$-invariant of $E_2$ has negative $p$-adic value. 
\end{proposition}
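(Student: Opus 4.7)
The plan is first to reformulate: by the first proposition of Section~2.1, $E_1(K_{E_2,p})[p^\infty]$ is finite if and only if $V_p(E_1)^{G_{K_{E_2,p}}}=0$, and I will in fact prove the stronger $V_p(E_1)^{I_{K_{E_2,p}}}=0$ by working on inertia. Replacing $K$ by a finite extension enlarges $K_{E_2,p}$ only by a finite factor, so finiteness of the $p^\infty$-torsion is preserved; hence I may assume $E_1$ has supersingular good reduction and, depending on the case, $E_2$ has ordinary good reduction, supersingular good reduction, or split multiplicative reduction, so that Proposition~\ref{Pro:5-4} applies directly to both curves.

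The central step is Goursat's lemma on inertia Lie algebras. Let $\mathfrak{i}_i:=\mathrm{Lie}(\rho_{E_i,p}(I_K))$ and let $\mathfrak{i}\subseteq\mathfrak{i}_1\times\mathfrak{i}_2$ be the Lie algebra of the joint inertia image $(\rho_{E_1,p},\rho_{E_2,p})(I_K)$, with surjective projections $\pi_i\colon\mathfrak{i}\to\mathfrak{i}_i$. Then $\mathfrak{h}:=\ker(\pi_2)$, viewed via $\pi_1$, is an ideal of $\mathfrak{i}_1$ that equals $\mathrm{Lie}(\rho_{E_1,p}(I_{K_{E_2,p}}))$ (using $I_{K_{E_2,p}}=I_K\cap G_{K_{E_2,p}}$). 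Goursat's lemma gives $\mathfrak{h}=0$ if and only if $\mathfrak{i}_1$ is abstractly isomorphic to a Lie-algebra quotient of $\mathfrak{i}_2$. By Proposition~\ref{Pro:5-4}, $\mathfrak{i}_1$ is either abelian of dimension $2$ (SS FCM) or $\mathrm{End}_{\mathbb{Q}_p}(V_p(E_1))$ of dimension $4$ (SS non-FCM), while $\mathfrak{i}_2$ is abelian of dimension $1$ (ord CM), the non-abelian $2$-dimensional ``$ax+b$'' algebra (ord non-CM, or $v(j_{E_2})<0$ via $\mathfrak{i}_2=\mathfrak{n}_X$), abelian of dimension $2$ (SS FCM), or $\mathrm{End}_{\mathbb{Q}_p}(V_p(E_2))$ whose only Lie-algebra quotients are $0,\mathbb{Q}_p,\mathfrak{sl}_2,\mathrm{End}$ (SS non-FCM). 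A short dimension and abelian-vs.-non-abelian inspection rules out $\mathfrak{i}_1$ as a quotient in every listed combination, hence $\mathfrak{h}\neq 0$.

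The final step deduces $V_p(E_1)^{\mathfrak{h}}=0$. In the FCM case $\mathfrak{g}_1\cong K_1$ acts on $V_p(E_1)\cong K_1$ by multiplication, so every non-zero element is invertible; in the non-FCM case the non-zero ideals of $M_2(\mathbb{Q}_p)$ are $\mathbb{Q}_p\cdot I$, $\mathfrak{sl}_2$, and $M_2$, each with no non-zero common null vector on $V_p(E_1)$. The identity $V^{H^0}=V^{\mathrm{Lie}(H)}$ for the connected component of the $p$-adic Lie group $H=\rho_{E_1,p}(I_{K_{E_2,p}})$ then yields $V_p(E_1)^{I_{K_{E_2,p}}}\subseteq V_p(E_1)^{H^0}=V_p(E_1)^{\mathfrak{h}}=0$, as required. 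The main obstacle is the case-by-case Goursat check; it is essential to work on \emph{inertia} and not the full Galois image, because, e.g., when $E_2$ has ordinary CM both $\mathfrak{g}_1$ and $\mathfrak{g}_2$ are abelian of dimension $2$ so the full-Galois Goursat obstruction vanishes, but on inertia $\mathfrak{i}_2$ drops to dimension $1$ and the obstruction returns.
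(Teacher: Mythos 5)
Your argument is correct, and it reaches the same combinatorial heart as the paper (a case-by-case comparison of the inertia Lie algebras against Prop.~\ref{Pro:5-4}), but it gets there by a genuinely different route. The paper first invokes Lemma~\ref{Lem:5-4}: since $E_1$ is supersingular, $V_p(E_1)$ is an irreducible $G_K$-module, and $K_{E_2,p}/K$ is Galois, so $V_p(E_1)^{G_{K_{E_2,p}}}$ is either $0$ or everything; this reduces the whole problem to showing $K_{E_1,p}\not\subset K_{E_2,p}$, and the assumed containment yields surjections $\mathfrak{g}_2\to\mathfrak{g}_1$ and $\mathfrak{i}_2\to\mathfrak{i}_1$ which are then killed by dimension and abelianization counts. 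You instead skip the irreducibility lemma entirely: Goursat on the joint inertia image shows that $\mathfrak{h}=\mathrm{Lie}(\rho_{E_1,p}(I_{K_{E_2,p}}))$ can vanish only if $\mathfrak{i}_1$ is an abstract quotient of $\mathfrak{i}_2$ (note this is the only direction of your ``if and only if'' that is true, and the only one you use), run the same quotient obstruction, and then finish by observing that every non-zero ideal of a non-split Cartan or of $\mathrm{End}_{\mathbb{Q}_p}(V_p(E_1))$ contains an invertible endomorphism, so $V_p(E_1)^{\mathfrak{h}}=0$ and hence $V_p(E_1)^{I_{K_{E_2,p}}}=0$. What your version buys is a strictly stronger conclusion (vanishing of inertia invariants, not just Galois invariants) and independence from the Galois-ness of $K_{E_2,p}/K$; what it costs is the extra final step and some standard but unstated facts about exactness of $\mathrm{Lie}$ for compact $p$-adic analytic groups (your ``$V^{H^0}=V^{\mathrm{Lie}(H)}$'' should be phrased as: a vector fixed by $H$ is fixed by an open subgroup, hence annihilated by $\mathrm{Lie}(H)$). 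Both proofs, yours and the paper's, implicitly need that $\mathfrak{i}_2$ is \emph{non-abelian} of dimension $2$ in the ordinary non-CM and $v(j)<0$ cases, which follows from Prop.~\ref{Pro:4-2}(2) (resp.\ the definition of $\mathfrak{n}_X$) rather than from the bare statement of Prop.~\ref{Pro:5-4}; it would be worth making that explicit.
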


To prove this proposition, 
we need the following important lemma,
which is easy to prove, but we often make use of this lemma 
in this subsection. 
\begin{lemma}
\label{Lem:5-4}Let $E$ be an elliptic curve over $K$ which has 
supersingular good reduction over $K$.  
Let $L$ be a Galois extension of $K$.
Then the group $E(L)[p^{\infty}]$ is finite
if and only if $K_{E,p}$ is not contained in $L$.
\end{lemma}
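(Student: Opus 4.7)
The plan is to reduce the statement to a Schur-type irreducibility assertion on the $G_K$-module $V_p(E)$ and then combine it with a normality argument coming from the Galois hypothesis on $L/K$.

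First I would dispose of the easy direction: if $K_{E,p}\subset L$, then $G_L\subset G_{K_{E,p}}=\ker(\rho_{E,p})$, so $G_L$ acts trivially on $T_p(E)$ and hence $E(L)[p^\infty]=E(\bar K)[p^\infty]$ is infinite. So it suffices to prove the converse: if $E(L)[p^\infty]$ is infinite, then $K_{E,p}\subset L$.

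Next, using that $L/K$ is Galois, $G_L$ is normal in $G_K$, and a standard conjugation computation shows that the subspace $V_p(E)^{G_L}\subset V_p(E)$ is stable under $G_K$. The hypothesis that $E(L)[p^\infty]$ is infinite translates into $V_p(E)^{G_L}\neq 0$. The key input now is that $V_p(E)$ is an \emph{irreducible} $G_K$-module when $E$ has supersingular good reduction over $K$. To see this I would invoke Prop.\ \ref{Pro:5-4}\,(3),(4): the Lie algebra $\mathfrak{g}$ of $\rho_{E,p}(G_K)$ is either a non-split Cartan subalgebra of $\mathrm{End}_{\mathbb{Q}_p}(V_p(E))\simeq \mathfrak{gl}_2(\mathbb{Q}_p)$ or the whole $\mathfrak{gl}_2(\mathbb{Q}_p)$. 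In the non-split Cartan case, $\mathfrak{g}$ is isomorphic to a quadratic field extension of $\mathbb{Q}_p$ acting by its regular representation on $V_p(E)$, which has no proper invariant $\mathbb{Q}_p$-subspace; in the second case irreducibility is trivial. Any $\rho_{E,p}(G_K)$-stable subspace is in particular $\mathfrak{g}$-stable, so $\rho_{E,p}(G_K)$ acts irreducibly on $V_p(E)$.

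Combining these, $V_p(E)^{G_L}$ is a non-zero $G_K$-stable subspace of the irreducible $G_K$-module $V_p(E)$, and therefore equals $V_p(E)$. Thus $G_L$ acts trivially on $V_p(E)$, i.e.\ $G_L\subset\ker(\rho_{E,p})=G_{K_{E,p}}$, which gives $K_{E,p}\subset L$ and finishes the proof. The only slightly delicate point is the passage from ``Lie algebra acts irreducibly'' to ``group acts irreducibly'', but this follows immediately from the fact that a closed $\mathbb{Q}_p$-Lie subgroup of $GL(V_p(E))$ preserves exactly the same $\mathbb{Q}_p$-subspaces as its Lie algebra; so the essential content of the lemma really is the structural result in Prop.\ \ref{Pro:5-4}.
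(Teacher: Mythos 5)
Your proof is correct and follows essentially the same route as the paper: the Galois hypothesis makes $V_p(E)^{G_L}$ a $G_K$-submodule, and irreducibility of $V_p(E)$ in the supersingular case forces it to be $0$ or all of $V_p(E)$. The only difference is that the paper cites Serre (\cite{Se89}, Thm.\ of A.2.2) directly for irreducibility while you derive it from Prop.\ \ref{Pro:5-4}; that derivation is fine, since the only direction you need is that a $\rho_{E,p}(G_K)$-stable subspace is $\mathfrak{g}$-stable (your closing claim that group and Lie algebra preserve \emph{exactly} the same subspaces is an overstatement, but it is not used).
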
    

\begin{proof}   
Let us assume that $K_{E,p}$ is not contained in $L$.
Then we have $V_p(E)^{G_L}\subsetneq V_p(E)$.
By the assumption that $L$ is a Galois extension of $K$,
we see that $V_p(E)^{G_L}$ is a $G_K$-submodule of $V_p(E)$.
In addition, since $E$ has supersingular reduction,
$V_p(E)$ is an irreducible $G_K$-module 
(cf.\ \cite{Se89}, Thm. of A.2.2). 
Hence we see that $V_p(E)^{G_L}$ is vanished and 
this implies the fact that $E(L)[p^{\infty}]$ is finite.

The converse is obvious. 
\end{proof}     

\begin{proof}[Proof of Prop.\ \ref{Pro:5-10}]
First we note that the Lie algebras 
$\mathfrak{g}_i:=\mathrm{Lie}(\rho_{E_i,p}(G_K))$ and 
$\mathfrak{i}_i:=\mathrm{Lie}(\rho_{E_i,p}(I_K))$ are
subspaces of $\mathrm{End}(V_p(E_i))$ for $i=1,2$. 
By extending $K$ finitely, we may assume that 
$E_1$ has good reduction over $K$.
By Lem.\ \ref{Lem:5-4}, it suffices to show that
$K_{E_1,p}$ is not contained in $K_{E_2,p}$. 
Let us assume that $K_{E_1,p}$ is contained in $K_{E_2,p}$.
Then there is a natural surjection
$\mathrm{Gal}(K_{E_2,p}/K)\to \mathrm{Gal}(K_{E_1,p}/K)$
and hence we obtain the surjections of Lie algebras
$\mathfrak{g}_2\to \mathfrak{g}_1$
and $\mathfrak{i}_2\to \mathfrak{i}_1$.
Hence we see that 
$\mathrm{dim}_{\mathbb{Q}_p}\mathfrak{g}_2
\ge \mathrm{dim}_{\mathbb{Q}_p}\mathfrak{g}_1$
and $\mathrm{dim}_{\mathbb{Q}_p}\mathfrak{i}_2
\ge \mathrm{dim}_{\mathbb{Q}_p}\mathfrak{i}_1$.
If $\mathfrak{g}_1$ (resp.\ $\mathfrak{i}_1$) is abelian,
we can also obtain the inequality 
$\mathrm{dim}_{\mathbb{Q}_p}(\mathfrak{g}_2/[\mathfrak{g}_2,\mathfrak{g}_2])\ge
\mathrm{dim}_{\mathbb{Q}_p}\mathfrak{g}_1$
(resp.\ $\mathrm{dim}_{\mathbb{Q}_p}(\mathfrak{i}_2/
[\mathfrak{i}_2,\mathfrak{i}_2])\ge
\mathrm{dim}_{\mathbb{Q}_p}\mathfrak{i}_1$).
But
Prop.\ \ref{Pro:5-4} implies that 
at least one of the above inequalities is not satisfied.
\end{proof}

In the rest of this section, 
we consider about the case where
$E_2$ has supersingular good  reduction.
First we prove an elementary result of the algebraic number theory.

\begin{lemma}
\label{Lem:5-5}
Suppose $F_1$ and $F_2$ are quadratic subfields in $K$.
Denote by $\mathrm{Nr}_{K/{F_i}}$ the norm map 
$K^{\times}\to F_i^{\times}$ for $i=1,2$.
Then the followings are equivalent:

\noindent
$(a)$ $F_1\not =F_2$,

\noindent 
$(b)$ $\mathrm{ker}(\mathrm{Nr}_{K/F_2})\not \subset
\mathrm{ker}(\mathrm{Nr}_{K/F_1})$,

\noindent
$(c)$ $\mathrm{ker}(\mathrm{Nr}_{K/F_1})\not \subset
\mathrm{ker}(\mathrm{Nr}_{K/F_2})$.
\end{lemma}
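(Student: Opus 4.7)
The plan is to transfer the multiplicative information about norm kernels to additive information about trace kernels via the $p$-adic logarithm, and then exploit the non-degenerate $\mathbb{Q}_p$-bilinear trace pairing on $K$ to identify the subfields. First, the equivalence $(b)\Leftrightarrow(c)$ is immediate by swapping the roles of $F_1$ and $F_2$, and if $F_1=F_2$ then $\ker(\mathrm{Nr}_{K/F_1})=\ker(\mathrm{Nr}_{K/F_2})$, contradicting both $(b)$ and $(c)$. So it suffices to establish the contrapositive of $(a)\Rightarrow(b)$: namely, that $\ker(\mathrm{Nr}_{K/F_2})\subset\ker(\mathrm{Nr}_{K/F_1})$ forces $F_1=F_2$.

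Assume this inclusion. For $n$ sufficiently large, the $p$-adic logarithm is a $\mathbb{Z}_p$-module isomorphism $\log\colon 1+\mathfrak{m}_K^n\to \mathfrak{m}_K^n$, it is injective on $\mathrm{Nr}_{K/F_i}(1+\mathfrak{m}_K^n)\subset F_i^\times$, and it intertwines norms with traces: $\log\circ\mathrm{Nr}_{K/F_i}=\mathrm{Tr}_{K/F_i}\circ\log$. Restricting the assumed inclusion to $1+\mathfrak{m}_K^n$ and applying $\log$ yields: for every $y\in \mathfrak{m}_K^n$, $\mathrm{Tr}_{K/F_2}(y)=0$ implies $\mathrm{Tr}_{K/F_1}(y)=0$. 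Since each $\ker(\mathrm{Tr}_{K/F_i})$ is a $\mathbb{Q}_p$-subspace of $K$, scaling by a sufficiently high power of $p$ promotes this to the global inclusion $\ker(\mathrm{Tr}_{K/F_2})\subset\ker(\mathrm{Tr}_{K/F_1})$ of $\mathbb{Q}_p$-subspaces of $K$.

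Both of these kernels have $\mathbb{Q}_p$-codimension $[F_i:\mathbb{Q}_p]=2$ in $K$ (each trace is $\mathbb{Q}_p$-linear and surjective onto $F_i$), so the inclusion is in fact an equality. Finally, the $\mathbb{Q}_p$-bilinear trace pairing $B(x,y):=\mathrm{Tr}_{K/\mathbb{Q}_p}(xy)$ on $K$ is non-degenerate, and it identifies $F_i^\perp$ with $\ker(\mathrm{Tr}_{K/F_i})$: for $f\in F_i$ and $y\in K$, the transitivity relation $\mathrm{Tr}_{K/\mathbb{Q}_p}(fy)=\mathrm{Tr}_{F_i/\mathbb{Q}_p}\bigl(f\cdot\mathrm{Tr}_{K/F_i}(y)\bigr)$ shows that $B(f,y)$ vanishes for all $f\in F_i$ if and only if $\mathrm{Tr}_{K/F_i}(y)=0$. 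Therefore $F_1^\perp=F_2^\perp$, and non-degeneracy of $B$ gives $F_1=F_2$.

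The main obstacle is the log-to-trace passage: one must check that the logarithm is defined and injective on both the domain $1+\mathfrak{m}_K^n$ and the relevant neighborhood of $1$ in each $F_i^\times$, and then promote the resulting local inclusion of additive kernels to the global inclusion of $\mathbb{Q}_p$-subspaces. Once this step is rigorous, the rest is linear algebra via the trace pairing, and notably no Galois hypothesis on $K/F_i$ is required.
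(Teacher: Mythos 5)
Your proof is correct, but it takes a genuinely different route from the paper's. The paper argues $(a)\Rightarrow(b)$ directly by restricting both norm maps to the subgroup $F_1^{\times}$: writing $F_1=\mathbb{Q}_p(\alpha_1)$ with $\alpha_1^2\in\mathbb{Z}_p$ and $d=[K:F_1F_2]$, one has $\mathrm{Nr}_{K/F_1}(x+y\alpha_1)=(x+y\alpha_1)^{2d}$, whose kernel on $F_1^{\times}$ is a finite group of roots of unity, whereas $\mathrm{Nr}_{K/F_2}(x+y\alpha_1)=(x^2-y^2\alpha_1^2)^{d}$, whose kernel on $F_1^{\times}$ is shown to be infinite by producing infinitely many solutions of $x^2-y^2\alpha_1^2=1$ via Hensel's lemma (with a separate computation for $p=2$); comparing cardinalities gives $(b)$, and $(c)$ follows by symmetry. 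You instead prove the contrapositive by linearizing: the $p$-adic logarithm converts the hypothesized inclusion of norm kernels, restricted to a deep principal unit group, into an inclusion $\ker(\mathrm{Tr}_{K/F_2})\subset\ker(\mathrm{Tr}_{K/F_1})$ of $\mathbb{Q}_p$-subspaces of $K$, which the codimension count upgrades to an equality, and the non-degenerate trace pairing then forces $F_1=F_2$. Your route buys uniformity in $p$ (no case split at $p=2$), requires no choice of generators $\alpha_i$, and generalizes verbatim to subfields of arbitrary degree, at the cost of invoking the standard (but routine) facts that $\log\circ\mathrm{Nr}_{K/F_i}=\mathrm{Tr}_{K/F_i}\circ\log$ near $1$ and that $\log$ is injective on a sufficiently deep unit group of $F_i$; the paper's route stays entirely inside the multiplicative groups and exhibits explicit elements witnessing the non-inclusion.
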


\begin{proof}
Let $\alpha_i$ be a 
$p$-adic integer satisfying that 
$F_i=\mathbb{Q}_p(\alpha_i)$ 
and  
$\alpha_i^2\in \mathbb{Z}_p$ for $i=1,2$.
Assume that $F_1\not =F_2$.
Then 
\[
\mathrm{Nr}_{K/{F_2}}(x+y\alpha_1)=(x^2-y^2\alpha_1^2)^d,\ 
\mathrm{Nr}_{K/{F_1}}(x+y\alpha_1)=(x+y\alpha_1)^{2d},
\]
where $d$ is the extension degree of $K/{F_1F_2}$ and 
$x,y\in \mathbb{Z}_p$.
By the elementary field theory,
$\mathrm{ker}(\mathrm{Nr}_{K/F_2})|_{F_1^{\times}}$
has order at most $2d$.
Thus, to obtain the desired result,  
it is enough to show that 
the following claim:
\begin{claim}
The group $\mathrm{ker}(\mathrm{Nr}_{K/F_1})|_{F_1^{\times}}$ has an 
infinite order.
\end{claim}
\noindent
If we suppose $p\ge 3$, then 
the polynomial $T^2-y^2\alpha_1^2-1$ has roots in $\mathbb{Z}_p$
for all $y\in p\mathbb{Z}_p$ by Hensel's lemma and thus the claim is true.
Suppose $p=2$.
To prove the claim, it suffices to show that 
$x^2-y^2\alpha_1^2$ is equal to $1$
for infinitely many $x, y\in \mathbb{Z}_p$.
Take any integer $k\ge 0$ and suppose $y=4kx$.
In this case $x^2-y^2\alpha_1^2=(1-16k^2\alpha_1^2)x^2$ and 
we see  
$(1-16k^2\alpha_1^2)\in (\mathbb{Z}_p^{\times})^2$ 
since it can be checked that the polynomial 
$(1+2T)^2-(1-16k^2\alpha_1^2)\in \mathbb{Z}_p[T]$ has 
roots in $\mathbb{Z}_p$ by Hensel's lemma.
This completes the proof of the claim.
\end{proof}

\begin{proposition}
\label{Pro:5-6}
Let $E_1$ and $E_2$ be elliptic curves over $K$
which have supersingular good reduction 
with formal complex multiplication.
Let $F_i\subset K$ be the field of formal complex multiplication
for each $E_i$.
Then
$E_1(K_{E_2,p})[p^{\infty}]$ is finite if  
$F_1\not =F_2$.
\end{proposition}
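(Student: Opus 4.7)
The plan is to apply Lemma \ref{Lem:5-4}: it suffices to show $K_{E_1,p}\not\subset K_{E_2,p}$. I argue by contradiction, assuming the inclusion holds. Replacing $K$ by a finite extension (which does not alter $F_1,F_2$, preserves the inclusion, and only enlarges $E_1(K_{E_2,p})[p^{\infty}]$, so is harmless for proving finiteness), I may assume both $E_i$ have supersingular good reduction with formal complex multiplication defined over $K$ and that $F_1,F_2\subset K$.

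Since $E_i$ is supersingular, the $p$-divisible group $\mathscr{E}_i(p)$ is connected, and formal complex multiplication by $F_i$ makes $V_p(E_i)$ into a free $F_i$-module of rank one. Hence $\rho_{E_i,p}$ is given by a character
\[
\psi_i : G_K \longrightarrow F_i^{\times}.
\]
Applying Lubin-Tate theory to the height-one formal $\cO_{F_i}$-module $\mathscr{E}_i(p)$ together with local class field theory, the composition
\[
\mathcal{O}_K^{\times}\xrightarrow{\mathrm{rec}_K} I_K^{\mathrm{ab}}\xrightarrow{\psi_i} F_i^{\times}
\]
agrees, up to a character of finite order, with the inverse of the norm map $\mathrm{Nr}_{K/F_i}|_{\mathcal{O}_K^{\times}}$.

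The assumed inclusion $K_{E_1,p}\subset K_{E_2,p}$ gives $\ker(\psi_2)\subset \ker(\psi_1)$, so transferring across reciprocity on inertia forces a subgroup of finite index of $\ker(\mathrm{Nr}_{K/F_2}|_{\mathcal{O}_K^{\times}})$ to be contained in $\ker(\mathrm{Nr}_{K/F_1}|_{\mathcal{O}_K^{\times}})$. I rule this out by examining $F_1^{\times}\cap\mathcal{O}_K^{\times}$: inspecting the proof of Lemma \ref{Lem:5-5}, the Hensel-lemma family $x^2-y^2\alpha_1^2=1$ (with $F_1=\mathbb{Q}_p(\alpha_1)$) produces infinitely many units of $F_1^{\times}$ lying in $\ker(\mathrm{Nr}_{K/F_2})$, while $\ker(\mathrm{Nr}_{K/F_1})\cap F_1^{\times}$ is finite (indeed contained in $\mu_{2d}(F_1)$, with $d=[K:F_1F_2]$). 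Any finite-index subgroup of $\ker(\mathrm{Nr}_{K/F_2})$ therefore still meets $F_1^{\times}$ in an infinite set and cannot be swallowed by $\ker(\mathrm{Nr}_{K/F_1})$, yielding the required contradiction.

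The main obstacle will be the precise form of the Lubin-Tate/class-field-theoretic identification of $\psi_i|_{I_K}$ with the norm character: one must check that the finite-order twist coming from the choice of Lubin-Tate normalization, together with the finite-index slack introduced when translating a kernel containment of Galois characters into a containment of norm kernels on $\mathcal{O}_K^{\times}$, are both genuinely absorbed by the infinite family of units extracted from the proof of Lemma \ref{Lem:5-5}.
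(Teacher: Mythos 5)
Your argument is correct and follows essentially the same route as the paper's: reduce via Lemma \ref{Lem:5-4} to showing $K_{E_1,p}\not\subset K_{E_2,p}$, identify $\rho_{E_i,p}|_{I_K}$ with the norm character $x\mapsto\mathrm{Nr}_{K/F_i}(x^{-1})$ on $\cO_K^{\times}$ (the paper simply cites Serre, A.2.2, for this exact identity, so no finite-order twist needs to be absorbed), and derive a contradiction from Lemma \ref{Lem:5-5}. Your finite-index strengthening of that lemma's Hensel-lemma construction is a harmless refinement of the same idea rather than a different approach.
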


\begin{proof}
By extending $K$ finitely,
we may assume that 
$E_1$ and $E_2$ have
formal complex multiplication over $K$.
Then the representation $\rho_{E_i,p}:G_K\to GL(V_p(E))$
has values in $O_{F_i}^{\times}$
and hence $\rho_{E_i,p}$ induces the representation
\[
\Psi_i:\cO_K^{\times}\simeq I(K^{\mathrm{ab}}/K)\to \cO_{F_i}^{\times}, 
\] 
where $I(K^{\mathrm{ab}}/K)$ is the inertia subgroup of 
$\mathrm{Gal}(K^{\mathrm{ab}}/K)$.
It is known that $\Psi_i(x)=\mathrm{Nr}_{K/F_i}(x^{-1})$ 
for any $x\in \cO_K^{\times}$ (see \cite{Se89}, A.2.2). 
Lam.\ \ref{Lem:5-4} implies that 
$E_1(K_{E_2,p})[p^{\infty}]$ is infinite 
only if 
$K^{\mathrm{ur}}(E_1(\bar K)[p^{\infty}])\subset 
K^{\mathrm{ur}}(E_2(\bar K)[p^{\infty}])$.
Because
$K^{\mathrm{ur}}(E_i(\bar K)[p^{\infty}])$
is the definition field of the representation 
$\rho_{E_i,p}|_{I_K}$
and 
$\mathrm{ker}(\mathrm{Nr}_{K/F_2})|_{\cO_K^{\times}}=
\mathrm{ker}(\mathrm{Nr}_{K/F_2})$ 
(resp.\ $\mathrm{ker}(\mathrm{Nr}_{K/F_1})|_{\cO_K^{\times}}=
\mathrm{ker}(\mathrm{Nr}_{K/F_1})$),
we see that 
$E_1(K_{E_2,p})[p^{\infty}]$ is infinite 
only if 
$\mathrm{ker}(\mathrm{Nr}_{K/F_2})\not \subset 
\mathrm{ker}(\mathrm{Nr}_{K/F_1})$.
Combining this fact and Lem.\ \ref{Lem:5-5},
we obtain the desired result.  
\end{proof}

For any elliptic curve $E$ over $K$,
we denote by $\hat E$ the formal group associated with $E$.
\begin{proposition}
\label{Pro:5-7}
Let $E_1$ and $E_2$ be elliptic curves over $K$
which have supersingular good reduction
without formal complex multiplication.

\noindent
$(1)$ If there is a non-trivial homomorphism of 
formal groups $\hat{E_2}\to \hat{E_1}$ over $\cO_K$,
then $E_1(K_{E_2,p})[p^{\infty}]$ is infinite.

\noindent
$(2)$ If $E_1(K_{E_2,p})[p^{\infty}]$ is infinite,
then there is a non-trivial homomorphism of 
formal groups $\hat{E_2}\to \hat{E_1}$ over $\cO_{K'}$
for some finite extension $K'$ of $K$.
\end{proposition}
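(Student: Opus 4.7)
The plan splits naturally along the two parts of the statement. For part $(1)$, the argument is short: since $E_i$ has supersingular good reduction, the associated $p$-divisible group $\mathscr{E}_i(p)$ is connected and coincides with the formal group $\hat{E_i}$, so $T_p(E_i) = T_p(\hat{E_i})$ as $G_K$-modules. Applying the Tate module functor to a nontrivial formal-group homomorphism $\hat{E_2} \to \hat{E_1}$ over $\cO_K$ produces a nonzero $\mathbb{Z}_p[G_K]$-equivariant map $T_p(E_2) \to T_p(E_1)$, and Prop.\ \ref{Pro:2-0} then immediately gives the infiniteness of $E_1(K_{E_2,p})[p^{\infty}]$.

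For part $(2)$, the strategy begins with Lem.\ \ref{Lem:5-4}: the infiniteness hypothesis is equivalent to the inclusion $K_{E_1,p}\subset K_{E_2,p}$, which in turn translates into a continuous surjective group homomorphism $\psi:\rho_{E_2,p}(G_K)\twoheadrightarrow \rho_{E_1,p}(G_K)$ satisfying $\rho_{E_1,p}=\psi\circ \rho_{E_2,p}$. By Prop.\ \ref{Pro:5-4}(4), both Lie algebras $\mathfrak{g}_i:=\mathrm{Lie}(\rho_{E_i,p}(G_K))$ are equal to the full matrix algebra $\mathrm{End}_{\mathbb{Q}_p}(V_p(E_i))$, so the differential $d\psi:\mathfrak{g}_2 \to \mathfrak{g}_1$ is a surjection between four-dimensional Lie algebras, hence an isomorphism of $\mathfrak{gl}_2(\mathbb{Q}_p)$ with itself.

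The main obstacle is converting this abstract Lie-algebra isomorphism into a nonzero $\mathbb{Q}_p$-linear intertwiner $S:V_p(E_2)\to V_p(E_1)$. My plan is to classify $\mathrm{Aut}(\mathfrak{gl}_2(\mathbb{Q}_p))$: using the decomposition $\mathfrak{gl}_2=\mathfrak{sl}_2\oplus \mathbb{Q}_p\cdot I$, any Lie-algebra automorphism must preserve both summands, acting on $\mathfrak{sl}_2$ by an inner automorphism (since $\mathfrak{sl}_2$ is split simple of type $A_1$ with no nontrivial diagram symmetry over $\mathbb{Q}_p$) and on the center $\mathbb{Q}_p\cdot I$ by some scalar $c\in \mathbb{Q}_p^{\times}$. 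The Weil-pairing identity $\det\rho_{E_i,p}=\e$ forces $\mathrm{tr}\circ d\psi=\mathrm{tr}$, which rules out any nontrivial central rescaling ($c=1$), so $d\psi(X)=SXS^{-1}$ for some $\mathbb{Q}_p$-linear isomorphism $S:V_p(E_2)\to V_p(E_1)$. Standard $p$-adic Lie theory then shows $\psi$ itself is given by $g\mapsto SgS^{-1}$ on some open subgroup of $\rho_{E_2,p}(G_K)$, which pulls back to a subgroup $G_{K'}\subset G_K$ for a finite extension $K'/K$, making $S$ a nonzero $G_{K'}$-equivariant map $V_p(E_2)\to V_p(E_1)$. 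To conclude, I would apply Tate's full-faithfulness theorem for $p$-divisible groups over $\cO_{K'}$: after clearing a denominator in $\mathbb{Q}_p^{\times}$, the map $S$ comes from a nontrivial homomorphism $\mathscr{E}_2(p)\to \mathscr{E}_1(p)$ over $\cO_{K'}$, which by the supersingular hypothesis is precisely the desired nontrivial homomorphism $\hat{E_2}\to \hat{E_1}$ of formal groups over $\cO_{K'}$.
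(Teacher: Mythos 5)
Your proposal is correct and follows essentially the same route as the paper: part $(1)$ via Tate's full faithfulness plus Prop.\ \ref{Pro:2-0}, and part $(2)$ via Lem.\ \ref{Lem:5-4}, the equality of the two Lie algebras with $\mathfrak{gl}_2(\mathbb{Q}_p)$, the splitting $\mathfrak{gl}_2=\mathfrak{sl}_2\oplus\mathbb{Q}_p\cdot I$ with the determinant--cyclotomic identity handling the center and the uniqueness of the two-dimensional representation of $\mathfrak{sl}_2$ handling the semisimple part, then passage from Lie algebras to an open subgroup and a final application of Tate's theorem. Your phrasing in terms of classifying $\mathrm{Aut}(\mathfrak{gl}_2(\mathbb{Q}_p))$ is only a cosmetic repackaging of the paper's claim that $\mathrm{Lie}(\rho_{E_1,p})$ and $\mathrm{Lie}(\rho_{E_2,p})$ are conjugate.
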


\begin{proof}
(1) Since each $E_i$ has supersingular good reduction,
there is a non-trivial $G_K$-homomorphism $T_p(E_2)\to T_p(E_1)$
by a theorem of Tate (cf.\ \cite{Ta67},\ Cor.\ 1 of Thm.\ 4).
Hence Prop.\ \ref{Pro:2-0} 
implies that $E_1(K_{E_2,p})[p^{\infty}]$ is infinite.

\noindent
(2) Extending $K$ finitely, 
we may assume that 
$E_1$ and $E_2$ have 
formal complex multiplication over $K$.
By Lem.\ \ref{Lem:5-4}, we know that $K_{E_1,p}\subset K_{E_2,p}$.
Furthermore this is a finite field extension because 
the Lie groups $\mathrm{Gal}(K_{E_1,p}/K)$ 
and $\mathrm{Gal}(K_{E_2,p}/K)$ have same Lie algebras.  
Hence we may assume that $K_{E_1,p}$ coincides with $K_{E_2,p}$ by 
replacing $K$ with a finite extension.
Put $G:=\mathrm{Gal}(K_{E_1,p}/K)=\mathrm{Gal}(K_{E_2,p}/K)$
and $\mathfrak{g}:=\mathrm{Lie}(G)$.
Then the Lie algebra homomorphism 
$\mathrm{Lie}(\rho_{E_i,p}):\mathfrak{g}\to
\mathfrak{gl}_2(\mathbb{Q}_p)$
corresponding to 
$\rho_{E_i,p}:G\to 
GL(T_p(E_i))\simeq GL_2(\mathbb{Z}_p)$
is an isomorphism 
by our assumptions of $E_i$
for $i=1,2$.
\begin{claim}
$\mathrm{Lie}(\rho_{E_1,p})$ is conjugate with $\mathrm{Lie}(\rho_{E_2,p})$.  
\end{claim}

\noindent
If we finish the proof of this claim, 
we know that $\rho_{E_1,p}$ is conjugate with 
$\rho_{E_2,p}$ on $G_{K'}$ for some finite extension $K'$ of $K$
(this fact follows from \cite{DSMS}, Thm.\ 9.11).
Thus there is a non-trivial $G_{K'}$-homomorphism 
$T_p(E_2)\to T_p(E_1)$. 
By using the theorem of Tate again,
we see that there is a desired non-trivial homomorphism 
of formal groups $\hat{E_2}\to \hat{E_1}$ over $\cO_{K'}$.
 
Let us prove the above claim.
We denote by $Z(G)$ 
(resp.\ $Z_1$, $Z_2$)  
the center of $G$ 
(resp.\ $\mathrm{Im}(\rho_{E_1,p})$, $\mathrm{Im}(\rho_{E_2,p})$)
and put $\mathfrak{s}:=\mathrm{Lie}(G/{\mathrm{Gal}(K(\mu_{p^{\infty}})}/K))$.
We see that the homomorphisms $\mathrm{Lie}(\rho_{E_1,p})$ and 
$\mathrm{Lie}(\rho_{E_2,p})$ induce isomorphisms
$\mathrm{Lie}(Z(G))\simeq \mathrm{Lie}(Z_1)
=\mathbb{Q}_p \cdot I_2$ and 
$\mathrm{Lie}(Z(G))\simeq \mathrm{Lie}(Z_2)
=\mathbb{Q}_p\cdot I_2$, 
respectively.
Here  $I_2$ is the unit matrix of $2\times 2$.
We shall remark that 
$\mathrm{Lie}(Z(G))$ and $\mathfrak{s}$
generate $\mathfrak{g}$.
This can be checked from the facts that 
$\mathrm{dim}_{\mathbb{Q}_p}\mathrm{Lie}(Z(G))=1$,
$\mathrm{dim}_{\mathbb{Q}_p}\mathfrak{s}=3$ and 
\begin{align*}
\mathrm{Lie}({\rho_{E_1,p}})(\mathrm{Lie}(Z(G))\cap \mathfrak{s})&=
\mathrm{Lie}({\rho_{E_1,p}})(\mathrm{Lie}(Z(G)))\cap 
\mathrm{Lie}({\rho_{E_1,p}})(\mathfrak{s})\\
&=\mathbb{Q}_p\cdot I_2 \cap
\mathfrak{sl}_2(\mathbb{Q}_p)
=0.
\end{align*}
Since $\mathrm{det}(\rho_{E_1,p})$ and $\mathrm{det}(\rho_{E_2,p})$
coincide with the $p$-adic cyclotomic character, 
we have the equality that 
$\mathrm{Lie}(\rho_{E_1,p})= 
\mathrm{Lie}(\rho_{E_2,p})$ on $\mathrm{Lie}(Z(G))$.
On the other hand, up to inner automorphisms,
there exists the unique
Lie algebra injection from 
$\mathfrak{s}$
to $\mathfrak{gl}_2(\mathbb{Q}_p)$ because 
$\mathfrak{s}$
is isomorphic to $\mathfrak{sl}_2(\mathbb{Q}_p)$.
This fact is followed from the highest weight theory (cf.\ 
\cite{Se64},\ the remark of the end of Subsection 7.4) 
Thus $\mathrm{Lie}(\rho_{E_1,p})$ is conjugate with
$\mathrm{Lie}(\rho_{E_2,p})$ on $\mathfrak{s}$.
Thus we have proved the assertion of the claim. 
\end{proof}

\subsection{Multiplicative reduction case}

Finally we give some results on the finiteness of 
$A(K_{B,p})[p^{\infty}]$ for an elliptic curve
$A=E$ over $K$ with multiplicative reduction 
and an abelian variety $B$ over $K$.
We always suppose this assumption to the end 
of this subsection.   
Since $E$ has multiplicative reduction,
we may choose a suitable basis of $T_p(E)$
such that the corresponding matrix of $\rho_{E,p}$ has the form
\[
\left(\begin{matrix}
  \e\chi^{-1} & u \\  0 & \chi
  \end{matrix}\right), 
\]
where
$\chi:G_K\to \mathbb{Z}_p^{\times}$ is a character 
which has an image
of order at most $2$ (cf.\ \cite{Co97}, Thm.\ 1.1).
Moreover $\chi$ is trivial if and only if 
$E$ has split multiplicative reduction over $K$.
We identify $T_p(E)$ as $\mathbb{Z}_p^{\oplus 2}$
with respect to the above basis.

\begin{proposition}
\label{Pro:5-2}
Let $E$ be an elliptic curve which has 
split multiplicative reduction over $K$.
Let $K(\mu_{p^{\infty}})$ be an algebraic extension of $K$  
adjoining all $p$-power roots of unity.
Then 
$E(K(\mu_{p^{\infty}}))[p^{\infty}]$ is infinite.
In particular, $E(K_{B,p})[p^{\infty}]$ is infinite
for any abelian varieties $B$ over $K$.
\end{proposition}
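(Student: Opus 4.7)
The plan is to read off the conclusion directly from the matrix description of $\rho_{E,p}$ recalled immediately before the statement. Since $E$ has \emph{split} multiplicative reduction, the character $\chi$ appearing in the displayed form of $\rho_{E,p}$ is trivial, so for a suitable basis of $T_p(E)$ one has
\[
\rho_{E,p}(\sigma)=\left(\begin{matrix} \e(\sigma) & u(\sigma) \\ 0 & 1 \end{matrix}\right)
\qquad\text{for all }\sigma\in G_K.
\]

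Next I would restrict to $G_{K(\mu_{p^{\infty}})}$. By definition of $\e$, the cyclotomic character is trivial on this subgroup, so
\[
\rho_{E,p}(\sigma)=\left(\begin{matrix} 1 & u(\sigma) \\ 0 & 1 \end{matrix}\right)\qquad\text{for all }\sigma\in G_{K(\mu_{p^{\infty}})}.
\]
Applied to the column vector ${}^t(1,0)\in T_p(E)=M_{2,1}(\mathbb{Z}_p)$, every such matrix fixes it. Hence $T_p(E)^{G_{K(\mu_{p^{\infty}})}}$ contains the rank-one free $\mathbb{Z}_p$-submodule generated by ${}^t(1,0)$ and in particular is non-zero.

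To finish, I would invoke the first Proposition of Section~2.1, which says that $E(L)[p^{\infty}]$ is finite if and only if $T_p(E)^{G_L}=0$. Applied with $L=K(\mu_{p^{\infty}})$, the non-triviality established above gives that $E(K(\mu_{p^{\infty}}))[p^{\infty}]$ is infinite. For the ``in particular'' clause, recall from the beginning of Section~2 that $K_{B,p}$ contains $K(\mu_{p^{\infty}})$ for any (semi)abelian variety $B$ (via the Weil pairing); consequently $E(K(\mu_{p^{\infty}}))[p^{\infty}]\subseteq E(K_{B,p})[p^{\infty}]$, and the latter is infinite as well. There is essentially no obstacle: the only thing to check carefully is the vanishing of $\chi$ in the split case (which is precisely the criterion quoted from Conrad just before the statement), after which the conclusion is a one-line matrix computation.
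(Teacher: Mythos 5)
Your proof is correct. The only point that needs care is the one you flag: in the basis of the display preceding the statement, the sub-line spanned by ${}^t(1,0)$ carries the character $\e\chi^{-1}$, which in the split case ($\chi=1$) is just $\e$; this dies on $G_{K(\mu_{p^{\infty}})}$, the off-diagonal entry $u$ never touches a vector with vanishing second coordinate, and so ${}^t(1,0)\in T_p(E)^{G_{K(\mu_{p^{\infty}})}}$. Combined with the first Proposition of Section 2.1 and the inclusion $K(\mu_{p^{\infty}})\subset K_{B,p}$, this gives both assertions.

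The paper's own proof is shorter and does not pass through the matrix form at all: it invokes the Tate curve uniformization $E(\bar K)[p^{\infty}]\simeq(\bar K^{\times}/q^{\mathbb{Z}})[p^{\infty}]$ as $G_K$-modules, so that $\mu_{p^{\infty}}$ sits inside $E(\bar K)[p^{\infty}]$ and is visibly rational over $K(\mu_{p^{\infty}})$. The two arguments encode the same fact --- the Conrad-style triangular shape of $\rho_{E,p}$ with sub $\e\chi^{-1}$ \emph{is} the Tate-module incarnation of the uniformization --- but your version has the merit of running in exactly the same formalism as the paper's Lemma \ref{Lem:5-3} for the non-split case (fixed vectors of the explicit matrix), so the split and non-split cases become a single computation distinguished only by whether $\chi$ is trivial. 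The paper's route is more self-contained at the level of points rather than Tate modules, and avoids having to recall which corner of the matrix is the subrepresentation.
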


\begin{proof}
By the theory of Tate curve, we have 
a $G_K$-isomorphism
\[
E(\bar K)[p^{\infty}]\simeq
\bar K^{\times}/{q^{\mathbb{Z}}}  
\]   
for some element $q$ in the maximal ideal of $\cO_K$ and 
hence we have the desired result. 
\end{proof}

Next we shall consider the case where
$E$ has non-split multiplicative reduction, equivalently,
$\chi\not =1$.
\begin{lemma}
\label{Lem:5-3}
Suppose that $E$ has non-split multiplicative reduction over $K$.
Let $L$ be an algebraic extension of $K$ which 
contains all $p$-power roots of unity.
Then $E(L)[p^{\infty}]$ is finite 
if and only if $\chi$ is not trivial on $G_L$.
\end{lemma}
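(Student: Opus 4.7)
The plan is to apply the criterion $E(L)[p^\infty]$ is finite $\iff V_p(E)^{G_L}=0$ (Prop.~1.2-style reformulation via the Tate module) directly to the explicit matrix form of $\rho_{E,p}$ recalled just before the lemma. Since $L$ contains $K(\mu_{p^\infty})$, the restriction $\varepsilon|_{G_L}$ is trivial, so on $G_L$ the representation simplifies to
\[
\rho_{E,p}|_{G_L}=\begin{pmatrix} \chi^{-1} & u \\ 0 & \chi \end{pmatrix},
\]
and the whole question reduces to inspecting fixed vectors of this $2\times 2$ matrix under the hypothesis on $\chi$.

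First I would dispose of the easy direction: if $\chi|_{G_L}=1$, then $\rho_{E,p}|_{G_L}$ is the unipotent matrix $\bigl(\begin{smallmatrix}1 & u\\ 0 & 1\end{smallmatrix}\bigr)$, the first basis vector is fixed, $V_p(E)^{G_L}\neq 0$, and $E(L)[p^\infty]$ is infinite.

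For the main direction, assume $\chi|_{G_L}$ is nontrivial and take $v={}^t(x,y)\in V_p(E)^{G_L}$. For every $\sigma\in G_L$ we obtain
\[
\chi(\sigma)^{-1}x+u(\sigma)y=x,\qquad \chi(\sigma)y=y.
\]
Because $\chi$ has image of order at most $2$ and is nontrivial on $G_L$, there exists $\sigma_0\in G_L$ with $\chi(\sigma_0)=-1$. The second relation applied to $\sigma_0$ gives $2y=0$, hence $y=0$ in the $\mathbb{Q}_p$-vector space $V_p(E)$. The first relation then becomes $\chi(\sigma)^{-1}x=x$ for all $\sigma\in G_L$; evaluating at $\sigma_0$ yields $2x=0$, so $x=0$. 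Therefore $V_p(E)^{G_L}=0$, and $E(L)[p^\infty]$ is finite.

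There is essentially no obstacle here; the only thing to be careful about is that the values of $\chi$ lie in $\mathbb{Z}_p^\times$ and have order at most $2$, so \emph{nontrivial on $G_L$} forces the existence of a $\sigma_0$ with $\chi(\sigma_0)=-1$ (not just $\chi(\sigma_0)\neq 1$), which is what makes the two-step elimination $y=0$ then $x=0$ work uniformly in $p$ (including $p=2$, where $-1\neq 1$ in $\mathbb{Z}_2$ is still fine since we divide in the $\mathbb{Q}_p$-vector space $V_p(E)$).
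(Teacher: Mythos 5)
Your proof is correct and follows essentially the same route as the paper: both reduce to the explicit fixed-vector conditions $\chi(\sigma)^{-1}x+u(\sigma)y=x$ and $\chi(\sigma)y=y$ on $G_L$ coming from the upper-triangular form of $\rho_{E,p}$ after killing $\varepsilon|_{G_L}$; the paper simply leaves the final elimination of $x$ and $y$ to the reader, which you have written out. (Note that any $\sigma_0$ with $\chi(\sigma_0)\neq 1$ already suffices, since $(\chi(\sigma_0)-1)y=0$ forces $y=0$ in the $\mathbb{Q}_p$-vector space; you do not actually need $\chi(\sigma_0)=-1$.)
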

\begin{proof}
Since the field $L$ contains all $p$-power roots of unity,
the matrix
$\left(\begin{smallmatrix}
  x \\ y
\end{smallmatrix}\right)\in T_p(E)$
is contained in $T_p(E)^{G_L}$ if and only if 
\[
\chi(\sigma)^{-1}x+u(\sigma)y=x,\quad \chi(\sigma)y=y 
\]
for all $\sigma\in G_L$.
The assertion immediately follows from this fact.  
\end{proof}

Applying this Lemma,
we can show the following finiteness results 
without difficulty.
\begin{proposition}
\label{Pro:5-3}
Let $E$ be an elliptic curve 
which has non-split multiplicative reduction over $K$.
We suppose that $p\not =2$. 

\noindent
$(1)$ If $L$ is a prime-to-$2$ algebraic extension
of $K$ 
which contains all $p$-power roots of unity,
then $E(L)[p^{\infty}]$ is finite.

\noindent
$(2)$ Let $B$ be an abelian variety such that 
$B(\bar K)[p]$ is rational over $K$. 
Then 
$E(K_{B,p})[p^{\infty}]$ is finite.
\end{proposition}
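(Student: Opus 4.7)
The plan is to reduce both (1) and (2) to Lem.\ \ref{Lem:5-3} by identifying the quadratic extension of $K$ cut out by the character $\chi$ and ruling out that this extension is contained in the relevant field.

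First I observe that, since $E$ has non-split multiplicative reduction, the character $\chi:G_K\to\mathbb{Z}_p^{\times}$ appearing in the matrix form of $\rho_{E,p}$ has image of order exactly $2$: the image has order at most $2$ by the structural result recalled just before Prop.\ \ref{Pro:5-2}, and it is non-trivial because the reduction is non-split. Thus $\chi$ factors through the Galois group of a quadratic extension $K_{\chi}/K$, and Lem.\ \ref{Lem:5-3} then tells us that for any algebraic extension $L$ of $K$ containing $K(\mu_{p^{\infty}})$, the group $E(L)[p^{\infty}]$ is finite if and only if $K_{\chi}\not\subset L$.

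For (1) I will argue that a prime-to-$2$ algebraic extension $L/K$ cannot contain the quadratic extension $K_{\chi}$: were $K_{\chi}\subset L$, some finite subextension $K'\subset L$ of odd degree over $K$ would already contain $K_{\chi}$, forcing $2=[K_{\chi}:K]$ to divide the odd integer $[K':K]$. Combined with the hypothesis $L\supset K(\mu_{p^{\infty}})$, Lem.\ \ref{Lem:5-3} yields the desired finiteness.

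For (2) I use that $K_{B,p}$ always contains $K(\mu_{p^{\infty}})$ (as noted at the start of Section 2), so Lem.\ \ref{Lem:5-3} applies. The hypothesis that $B(\bar K)[p]$ is $K$-rational means $\rho_{B,p}(G_K)$ lies in the kernel of the reduction map $GL(T_p(B))\to GL(B[p])$, which is a pro-$p$ group; hence $\mathrm{Gal}(K_{B,p}/K)$ is pro-$p$. Since $p\ne 2$, a pro-$p$ group admits no closed subgroup of index $2$, so $K_{B,p}$ contains no quadratic extension of $K$, in particular $K_{\chi}\not\subset K_{B,p}$, and (2) follows. The only delicate point in the whole argument is making sure the hypotheses of Lem.\ \ref{Lem:5-3} are in force (hence the explicit verification $K_{B,p}\supset K(\mu_{p^{\infty}})$); beyond that, the argument is a clean combination of the matrix description of $\rho_{E,p}$ with the elementary fact that, for odd $p$, pro-$p$ behaviour and order-$2$ phenomena cannot coexist.
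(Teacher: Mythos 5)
Your proof is correct and follows essentially the same route as the paper: identify the quadratic field $K_{\chi}$ cut out by $\chi$, show it cannot lie in $L$ (resp.\ $K_{B,p}$), and conclude via Lem.~\ref{Lem:5-3}. The only difference is cosmetic: the paper dispatches (2) by declaring it a special case of (1), which tacitly uses exactly the pro-$p$ observation you spell out, so your version just makes that reduction explicit.
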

\begin{proof}
Since the assertion $(2)$ is a special case of $(1)$,
it is enough to show the assertion $(1)$. 
Let $K_{\chi}$ be the definition field of the character $\chi$. 
By the assumption that $L$ is a  profinite prime-to-$2$ algebraic extension
of $K$, we know that $L\cap K_{\chi}=K$, in particular
$K_{\chi}$ is not contained in $L$.
Consequently 
it follows that $E(L)[p^{\infty}]$ is finite
by Lem.\ \ref{Lem:5-3}.
\end{proof}

\end{document}